\def\mc{\multicolumn}
\def\norm#1{\|#1\|}
\DeclareMathAlphabet{\mathpzc}{OT1}{pzc}{m}{it}
\DeclareMathAlphabet{\mathbfcal}{OMS}{cmsy}{b}{n}
\colorlet{blue}{blue!90!black}
\colorlet{red}{red!50!black}
\colorlet{green}{green!50!black}
\newcommand{\blue}[1]{\begin{color}{blue}#1\end{color}}
\newcommand{\red}[1]{\begin{color}{red}#1\end{color}}
\newcommand{\green}[1]{\begin{color}{green}#1\end{color}}
\newtheorem{definition}{Definition}
\newtheorem{assumption}{Assumption}
\newtheorem{lemma}{Lemma}
\newtheorem{theorem}{Theorem}
\newtheorem{proposition}{Proposition}
\newtheorem{remark}{Remark}
\numberwithin{equation}{section}
\numberwithin{remark}{section}
\numberwithin{equation}{section}
\numberwithin{theorem}{section}
\numberwithin{definition}{section}
\numberwithin{proposition}{section}
\numberwithin{remark}{section}
\numberwithin{assumption}{section}
\numberwithin{lemma}{section}
\DeclareMathOperator*{\argmin}{arg\,min}
\DeclareMathOperator*{\ri}{ri}
\DeclareMathOperator*{\diag}{Diag}
\DeclareMathOperator*{\dom}{dom}
\DeclareMathOperator*{\range}{Range}
\def\norm#1{\|#1 \|}
\def\inprod#1#2{\langle#1,\,#2 \rangle}
\def\what{\widehat}
\def\prox{{\rm Prox}}
\def\dist{{\rm dist}}
\def\sgs{{\sf sGS}}
\def\mc{\multicolumn}
\def\dH{{\mathds H}}
\def\dU{{\mathds U}}
\def\dW{{\mathds W}}
\def\dX{{\mathds X}}
\def\dY{{\mathds Y}}
\def\dZ{{\mathds Z}}
\def\dV{{\mathds V}}
\def\Sn{{\mathds S}^n}
\def\bA{{\mathbfcal{A}}}
\def\bK{{\mathbf K}}
\def\cA{{\mathcal A}}
\def\cB{{\mathcal B}}
\def\cD{{\mathcal D}}
\def\cF{{\mathcal F}}
\def\cG{{\mathcal G}}
\def\cI{{\mathcal I}}
\def\cL{{\mathcal L}}
\def\cM{{\mathcal M}}
\def\cN{{\mathcal N}}
\def\cO{{\mathcal O}}
\def\cP{{\mathcal P}}
\def\cQ{{\mathcal Q}}
\def\cR{{\mathcal R}}
\def\cS{{\mathcal S}}
\def\cT{{\mathcal T}}
\def\sig{\sigma}
\def\S{\mathds{S}}
\def\R{\mathds{R}}
\def\ds{\displaystyle}
\def\[{\begin{equation}}
\def\]{\end{equation}}
\title{\bf \Large On the Equivalence of Inexact Proximal ALM and ADMM for a Class of
Convex Composite Programming}
\date{\today}
\author{
Liang Chen\thanks{College of Mathematics and Econometrics, Hunan University, Changsha, 410082, China (\url{chl@hnu.edu.cn}), and
Department of Applied Mathematics, The Hong Kong Polytechnic University, Hung Hom, Kowloon, Hong Kong (\url{liangchen@polyu.edu.hk}).
The research of this author was supported by the National Natural Science Foundation of China (11801158, 11871205) and the Fundamental Research Funds for the Central Universities in China.
}
 \quad
Xudong Li\thanks{School of Data Science, Fudan University, Shanghai 200433, China, and  Shanghai Center for Mathematical Sciences, Fudan University, Shanghai 200433, China (\url{lixudong@fudan.edu.cn}).
The research of this author was supported by the Fundamental Research Funds for the Central Universities in China.
}
\quad
Defeng Sun\thanks{Department of Applied Mathematics, The Hong Kong Polytechnic University, Hung Hom, Kowloon, Hong Kong
(\url{defeng.sun@polyu.edu.hk}). The research of this author was supported in part by a start-up research grant from the Hong Kong Polytechnic University.}
\ \ and \ \
Kim-Chuan Toh\thanks{Department of Mathematics, and Institute of Operations Research and Analytics, National University of Singapore, 10 Lower Kent Ridge Road, Singapore 119076 (\url{mattohkc@nus.edu.sg}).
The research of this author was supported in part by the Ministry of Education, Singapore, Academic Research Fund (R-146-000-257-112).
}
}
\date{March 28, 2018; Revised on Jan 28, 2019}
\begin{document}
\maketitle

%%%%%%%%%%%%%%%%%%%%%%%%%%%%% ABSTRACT %%%%%%%%%%%%%%%%%%%%%%%%%%%%%
\begin{abstract}
In this paper, we show that for a class of linearly constrained convex composite optimization problems, an (inexact) symmetric Gauss-Seidel based majorized multi-block proximal alternating direction method of multipliers (ADMM) is equivalent to an {\em inexact} proximal augmented Lagrangian method (ALM). This equivalence not only provides new perspectives for understanding some ADMM-type algorithms but also supplies meaningful guidelines on implementing them to achieve better computational efficiency. Even for the two-block case, a by-product of this equivalence is the convergence of the whole sequence generated by the classic ADMM with a step-length that exceeds the conventional upper bound of $(1+\sqrt{5})/2$, if one part of the objective is linear. This is exactly the problem setting in which the very first convergence analysis of ADMM was conducted by Gabay and Mercier in 1976, but, even under notably stronger assumptions, only the convergence of the primal sequence was known. A collection of illustrative examples are provided to demonstrate the breadth of applications for which our results can be used. Numerical experiments on solving a large number of linear and convex quadratic semidefinite programming problems are conducted to illustrate how the theoretical results established here can lead to improvements on the corresponding practical implementations.
\end{abstract}
\medskip
{\small
\begin{center}
\parbox{0.95\hsize}{{\bf Keywords.}\;
Alternating direction method of multipliers;
Augmented Lagrangian method;
Symmetric Gauss-Seidel decomposition;
Proximal term}
\end{center}
\begin{center}
\parbox{0.95\hsize}{{\bf AMS Subject Classification.}\; 90C25; 65K05; 90C06; 49M27; 90C20}
\end{center}}

%\color{black}

%%%%%%%%%%%%%%%%%%%%%%%%%%%%%%%%%%%%%%%%%%%%%

\section{Introduction}
Let $\dX$, $\dY$ and $\dZ$ be three finite-dimensional real Hilbert spaces each endowed with an inner product denoted by $\langle\cdot,\cdot\rangle$ and its induced norm denoted by $\|\cdot\|$, where $\dY:=\dY_1\times\cdots\times\dY_s$ is the Cartesian product of $s$ finite-dimensional real Hilbert spaces $\dY_i,i=1,\ldots,s,$ each endowed with the inner product, as well as the induced norm, inherited from $\dY$. For any given $y\in\dY$, we can write $y=(y_1; \ldots; y_s)$ with $y_i\in\dY_i,\forall\, i=1,\ldots, s$.
Here, and throughout this paper, we use the notation $(y_1;\ldots; y_s)$ to mean that the vectors $y_1,\ldots,y_s$
are written symbolically in a column format.

In this paper, we shall focus on the following multi-block convex composite optimization problem
\[
\label{probmulti}
\min_{y\in\dY,z\in\dZ}
\left\{
p(y_1)+f(y)
-\langle b,z\rangle
\mid
\cF^*y+\cG^*z=c\right\},
\]
where $p:\dY_1\to(-\infty,+\infty]$ is a (possibly nonsmooth) closed proper convex function,
$f:\dY\to(-\infty,+\infty)$ is a continuously differentiable convex function whose gradient is Lipschitz continuous,
$b\in\dZ$ and $c\in\dX$ are the given data,
and $\cF^*$ and $\cG ^*$ are the adjoints of the given linear mappings $\cF:\dX\to\dY$ and $\cG :\dX\to\dZ$, respectively.
Despite the simple appearance of problem \eqref{probmulti}, we shall see in the next section that this model actually encompasses various important classes of convex optimization problems in both classical core convex programming as well as recently emerged models from a broad range of real-world applications.
A quintessential example of problem \eqref{probmulti} is the dual of the following convex composite quadratic programming
\[
\label{modelexample}
\min_{x}
\left\{\psi(x)+\frac{1}{2}\langle x,\cQ x\rangle-\langle c,x\rangle
\mid
\cG x=b
\right\},
\]
where $\psi:\dX\to(-\infty,+\infty]$ is a closed proper convex function, $\cQ:\dX\to\dX$ is a self-adjoint positive semidefinite linear operator, $\cG:\dX\to\dZ$ is a linear mapping,
and $c\in\dX$, $b\in\range(\cG)$ (i.e., $b$ is in the range space of the linear operator $\cG$) are the given data.
The dual of problem \eqref{modelexample} in the minimization form can be written as follows:
\[
\label{modelexampled}
\min_{y_1,y_2,z}
\left\{
\psi^*(y_1)
+\frac{1}{2}\langle y_2,\cQ y_2\rangle
-\langle b,z\rangle
\ \big|\ y_1+\cQ y_2- \cG^*z=c
\right\},
\]
where $\psi^*$ is the Fenchel conjugate of $\psi$,
$y_1\in\dX$, $y_2\in\dX$ and $z\in\dZ$, so that problem \eqref{modelexampled} constitutes an instance of problem \eqref{probmulti}.

To solve problem \eqref{probmulti}, one of the most preferred approaches is the augmented Lagrangian method (ALM) initiated by Hestenes \cite{hestenes} and Powell \cite{powell}, and elegantly studied for general (without taking
into account of the multi-block structure) convex optimization problems in the seminal work of Rockafellar \cite{roc76b}.
Given a penalty parameter $\sigma>0$, the augmented Lagrangian function corresponding to problem \eqref{probmulti} is defined by
$$
\begin{array}{r}
\ds
L_{\sigma}(y,z;x):=p(y_1)+f(y)-\langle b,z\rangle
+\langle x,\cF^*y+\cG^*z-c\rangle+\frac{\sig}{2}\|\cF^*y+\cG^*z-c\|^2,\quad
\\[3mm]
\quad \forall\,(x,y,z)\in\dX\times\dY\times\dZ.
\end{array}
$$
Starting from a given initial multiplier $x^0\in\dX$, the ALM performs the following steps at the $k$-th iteration:
\begin{description}
\item[(1)] compute $(y^{k+1},z^{k+1})$ to (approximately) minimize the function $L_{\sigma}(y, z; x^k)$, and

\smallskip
\item[(2)] update the multipliers $x^{k+1}:=x^k+\tau\sigma(\cF^*y^{k+1}+\cG^*z^{k+1}-c)$, where $\tau\in(0,2)$ is the step-length.
\end{description}
While one would really want to solve $\min_{y,z} L_{\sigma}(y,z;x^k)$ as it is without modifying
the augmented Lagrangian function, it can be expensive to
minimize $L_{\sigma}(y,z;x^k)$ with respect to both $y$ and $z$ simultaneously,
due to the coupled quadratic term in $y$ and $z$. Thus, in practice, unless the ALM is converging rapidly, one would generally
want to replace the augmented Lagrangian subproblem with an easier-to-solve surrogate by
modifying the augmented Lagrangian function to decouple the minimization with
respect to $y$ and $z$. Such a modification is especially desirable during the initial phase of the ALM
when its local superlinear convergence has yet to kick in.
The most obvious approach to decouple the subproblem for obtaining $(y^{k+1}, z^{k+1})$ is to add
to $L_\sig(y,z;x^k)$ the proximal term $\frac{\sig}{2}\norm{ (y;z) -(y^k; z^k)}^2_\Lambda$,
where $\Lambda = \lambda^2\cI - (\cF; \cG)(\cF; \cG)^*$ with $\lambda$ being the largest singular value of $(\cF;\cG)$ and $\cI$ being the identity operator in $\dY\times\dZ$.
However, such a modification to the augmented Lagrangian function is generally too drastic and has the undesirable effect of significantly slowing down the convergence of the ALM \cite[Section 7]{chenl2015}.
This naturally leads us to the important question on what is an appropriate proximal term
to add to $L_\sig(y,z;x^k)$ such that the ALM subproblem is easier to solve while at the same time it is less drastic than the obvious choice we have just mentioned in the previous sentences.

We shall show in this paper that by adding an appropriately designed proximal term
to $L_\sig(y,z;x^k)$, we can reduce the computation of the modified ALM subproblem
to sequentially updating $y$ and $z$ via computing
$$
y^{k+1} \approx \min_y\left\{ L_\sig(y, z^k; x^k)\right\}
\quad\mbox{and}\quad
z^{k+1} \approx \min_z \left\{ L_\sig(y^{k+1},z; x^k)\right\}
.$$
The reader would have observed that the resulting proximal ALM updating scheme is the same
as the classic two-block ADMM (pioneered by Glowinski and Marroco \cite {glo75} and Gabay and Mercier \cite{gabay1976}) that is applied to problem \eqref{probmulti}. However, there is a \emph{crucial difference} in that our convergence result holds true for the step-length $\tau$ in the range $(0,2)$, whereas the classic two-block ADMM only allows the step-length to be in the interval $\big(0,(1+\sqrt{5})/2\big)$ if the convergence of the full sequence generated by the algorithm is required.
It is important to note that even with the sequential minimization of $y$ and $z$ in the modified ALM subproblem,
the minimization subproblem with respect to $y$ can still be very difficult to solve due to
the coupling of the blocks $y_1,\ldots, y_s$ in \eqref{probmulti}.
One of the main contributions we made in this paper is to show that by majorizing the function $f(y)$ at $y^k$ with a quadratic function and by
adding an extra proximal term that is derived based on the block symmetric Gauss-Seidel (sGS) decomposition
theorem \cite{lisgs} for the quadratic term associated with $y$, we are able to update the sub-blocks in $y$
individually
in a symmetric Gauss-Seidel fashion. A crucial implication of this result is that the (inexact) block sGS decomposition based multi-block majorized ADMM is equivalent to an inexact majorized proximal ALM.
Consequently, we are able to prove the convergence of the whole sequence generated by the former even when the step-length is in the range $(0,2)$.

In this paper, we shall not delve into the vast literature on both ALM and ADMM, as well as their variants, and their relationships to the proximal point method and operator splitting methods.
They are simply too abundant for us to list even a few of them here. Thus, we shall only refer to those that are most relevant for our work in this paper.
Here we should mention that many attempts have been made in recent years on
designing convergent multi-block ADMM-type algorithms that can outperform the directly extended multi-block (proximal ADMM) numerically.
While the latter is not guaranteed to converge even under the strong assumption that $f\equiv0$,
paradoxically its practical numerical performance is often better than many convergent variants that
have been developed in the past; see for example \cite{sty2015}.
Against this backdrop, we should mention that the ADMM-type algorithms that have been progressively designed in \cite{sty2015,lixd2014,chenl2015}
not only come with convergence guarantee but they have also been demonstrated to have superior numerical performance than the directly extended ADMM, at least for a large number of convex conic programming problems.
More recently, those algorithms have found applications in various areas \cite{bai,BAIQI2016,DQi2016,ferr,lam,liqsdpnal,wangzou,yangaoteo,yst2015}.
Among those algorithms, the most general and versatile one is the recently developed inexact majorized multi-block proximal ADMM in Chen et al. \cite{chenl2015}, which we shall briefly describe in the next paragraph.

Under the assumption that the gradient of $f$ is Lipschitz continuous, we know that
one can specify a fixed self-adjoint positive semidefinite linear operator $\what\Sigma^f:\dY\to\dY$ and define at each $y'\in\dY$ the following convex quadratic function
\[
\label{mfy}
\what f(y,y'):=f(y')+\langle\nabla f(y'),y-y'\rangle
+\frac{1}{2}\|y-y'\|^2_{\what\Sigma^f}\, ,
\quad\forall\, y\in\dY,
\]
such that
$$f(y)\le \what{f}(y,y'),\quad \forall y,y'\in\dY\quad\mbox{and}\quad
f(y')=\what{f}(y',y'),\quad \forall y'\in\dY.$$
Thus, we say that at each $y'\in\dY$, the function $\what f(\cdot,y')$ constitutes a majorization of the function $f$.
Let $\sigma>0$ be the penalty parameter.
Based on the notion of majorization described above, the \emph{majorized} augmented Lagrangian function of problem \eqref{probmulti} is defined by
\[
\label{lagr}
\begin{array}{l}
\cL_\sigma\big(y,z;(x,y')\big):=p(y_1)+\what f(y,y')-\langle b,z\rangle+\langle \cF^*y+\cG^*z-c,x\rangle
\\[3mm]
\qquad\qquad\qquad\quad\ds
+\frac{\sigma}{2}\|\cF^*y+\cG^*z-c\|^2,
\quad
\forall\,(y,z,x,y')\in\dY\times\dZ\times\dX\times\dY.
\end{array}
\]
Let $(x^0,y^0,z^0)\in\dX\times\dY\times\dZ$ be a given initial point with $y_1^0 \in \dom p$,
and $\cD_i:\dY_i\to\dY_i$, $i=1,\ldots,s$ be the given self-adjoint linear operators, for the purpose of facilitating the computations of the subproblems.
For convenience, we denote for any $y=(y_1;\ldots;y_s)\in\dY_1\times\cdots\times\dY_s$,
$$y_{< i}:=(y_1;\ldots;y_{i-1})\quad\mbox{and}\quad
y_{> i}:=(y_{i+1};\ldots;y_s),
\quad\forall\, i=1,\ldots,s.$$
Then, the $k$-th step of the (inexact) block sGS decomposition based majorized multi-block
proximal ADMM in \cite{chenl2015}, when applied to problem \eqref{probmulti}, takes the following form
\[
\label{iadmm}
\left\{
\begin{array}{l}
y_{i}^{k+\frac{1}{2}}
\approx{\argmin\limits_{y_{i}\in\dY_{i}}}
\left\{\cL_\sigma
\Big(\big(y^{k}_{< i};y_{i}; y^{k+\frac{1}{2}}_{> i}\big),z^{k};(x^k,y^k)\Big)
+\frac{1}{2}\|y_{i}-y_{i}^k\|_{\cD_{i}}^2 \right\},
\mbox{ $i=s,\ldots,2$\,;}
\\[3mm]
y_{i}^{k+1}
\approx{\argmin\limits_{y_{i}\in\dY_{i}}}
\left\{\cL_\sigma
\Big(\big(y^{k+1}_{< i};y_{i}; y^{k+\frac{1}{2}}_{> i}\big),z^{k};(x^k,y^k)\Big)
+\frac{1}{2}\|y_{i}-y_{i}^k\|_{\cD_{i}}^2 \right\}, \mbox{ $i=1,\ldots,s$}\,;
\\[3mm]
z^{k+1}\approx\argmin\limits_{z\in\dZ}\cL_{\sigma}\Big(y^{k+1},z;(x^k,y^k)\Big);
\\[3mm]
x^{k+1}=x^k+\tau\sigma\big(\cF^*y^{k+1}+\cG^* z^{k+1}-c\big),
\end{array}
\right.
\]
where $\tau\in\left(0,(1+\sqrt{5})/2\right)$ was allowed in \cite{chenl2015}.
As one can observe from \eqref{lagr} and \eqref{iadmm}, the quadratic majorization technique
in Li {et al.} \cite{limin} was used to replace the original augmented Lagrangian function
by the majorized augmented Lagrangian function.
This in turn enables us to employ the inexact block sGS decomposition technique in Li {et al.} \cite{lisgs} to sequentially update the sub-blocks of $y$ individually.
More importantly, the algorithm is highly flexible in that
all the subproblems are allowed to be solved approximately to overcome possible numerical obstacles
such as, for example,
when iterative solvers must be employed to solve {large-scale} linear systems to overcome extreme
memory requirement and prohibitive computing cost.
It has already been demonstrated in \cite{chenl2015} that the inexact block sGS decomposition based multi-block ADMM is far superior to the directly extended ADMM in solving high-dimensional linear and
convex quadratic semidefinite programming with the
step-length in \eqref{iadmm} being restricted to be less than $(1+\sqrt{5})/2.$

Our focus in this paper is to investigate whether the framework in \eqref{iadmm} can
be proven to be convergent for problem \eqref{probmulti} when the step-length $\tau$ is in the range $(0,2)$.
In particular, we will show that the inexact block sGS decomposition based multi-block ADMM \eqref{iadmm} is equivalent to an {\em inexact} majorized proximal ALM in the sense that computations of $y^{k+1}$, $z^{k+1}$ and $x^{k+1}$ in \eqref{iadmm} can
equivalently be written as
$$
\left\{
\begin{array}{l}
\left(y^{k+1},z^{k+1}\right)
\approx
\argmin\limits_{(y,\, z)\in\dY\times\dZ}
\Big\{\cL_\sigma
\big(y,z;(x^k,y^k)\big)
+\frac{1}{2}\|\big(y;z\big)-\big(y^k;z^k\big)\|_{\cT}^2 \Big\};
\\[5mm]
x^{k+1}=x^k+\tau\sigma\big(\cF^*y^{k+1}+\cG^* z^{k+1}-c\big),
\end{array}
\right.
$$
where $\cT:\dY\times\dZ\to\dY\times\dZ$ is a self-adjoint (not necessarily positive definite) linear operator whose precise definition will be given later, and $\|(y;z)\|^2_\cT:=\langle (y;z),\cT (y;z)\rangle,\,\forall\,(y,z)\in\dY\times\dZ$.
This connection not only provides new theoretical perspectives for analyzing multi-block ADMM-type algorithms, but also has the potential of allowing them to achieve even better computational efficiency since
a larger step-length beyond $(1+\sqrt{5})/2$ can now be taken in \eqref{iadmm}, without adding any extra conditions or any additional verification steps such as those extensively used in \cite{sty2015,lixd2014,chenl2015,chennote}.

The main contributions of this paper are as follows.
\begin{itemize}
\item
We derive the equivalence of an (inexact) block sGS decomposition based multi-block majorized proximal ADMM to an inexact majorized proximal ALM, and establish the global and local convergence properties of the latter with the step-length
$\tau \in (0,2)$. As a result, the global and local convergence properties of the former
even with $\tau\in(0,2)$ are also established.
\item
Even for the most conventional two-block case, we are able for the first time to rigorously characterize the connection between ADMM and proximal ALM.
Note that given the form of the updating rules of the classic ADMM and ALM, although it is natural to view ADMM as an approximate version of the ALM, this is not completely true as can be seen from our analysis in this paper.
Indeed, to alleviate the difficulty of solving the subproblems in the ALM, the classic ADMM uses a single cycle of
the Gauss-Seidel block minimization to replace the full minimization of the augmented Lagrangian function in the ALM. This viewpoint in fact
motivated the study of the classic ADMM in the very first paper \cite{glo75}. However, as was mentioned in \cite{eckstein2012,eckstein2015}, there were no known results in
 quantifying this interpretation.

\item
As a by-product of the second contribution, this paper gives an affirmative answer to the open question on whether the dual sequence generated by the classic ADMM with $\tau\in(0,2)$ is convergent if one of the two functions in the objective is linear\footnote{This question was first resolved in \cite{sty2015} when the initial multiplier $x^0$ satisfies $\cG x^0-b=0$ and all the subproblems are solved exactly.}.
This is the problem setting of the very first proof for the ADMM in Gabay and Mercier \cite[Theorem 3.1]{gabay1976} in which the dual sequence is only guaranteed to be bounded, even under very strong assumptions.
The later proof of Glowinski \cite[Chapter 5, Theorem 5.1]{glo80} established stronger results than \cite{gabay1976} but it requires
$\tau\in\left(0,(1+\sqrt{5})/2\right)$. Thereafter, only the latter interval, and especially the unit step-length, has been considered.
In fact, in a rigorous proof presented recently in \cite{chennote} for the classic two-block ADMM with $\tau\in\left(0,\big(1+\sqrt{5}\big)/2\right)$, it was shown that the convergence of the dual sequence can be guaranteed under pretty weak conditions but the convergence of the primal sequence requires more.
Hence, it is of much theoretical interest to clarify whether the dual sequence is convergent if the objective contains a linear part while $\tau\ge\big(1+\sqrt{5}\big)/{2}$.

\item
We provide a fairly general criterion for choosing the possibly indefinite\footnote{One may refer to \cite{limin} for the details that motivating the use of indefinite proximal terms in the $2$-block majorized proximal ADMM, especially \cite[Section 6]{limin} on their computational merits, as well as \cite{zhangning} for the similar results in multi-block cases.}
linear operators $\cD_i$, $i=1,\ldots,s$, in the proximal terms, which unifies those used in Chen et al. \cite{chenl2015} and those used in Zhang et al. \cite{zhangning} to guarantee the viability of the block sGS decomposition techniques and the convergence of the whole sequence generated by the algorithm
in \eqref{iadmm}. Recall that the proximal terms in \cite{chenl2015} should be positive semidefinite while in \cite{zhangning} the functions being majorized should be separable with respect to each block of variables.
Here, we do not require $f$ to be separable and indefinite proximal terms are allowed.

\item
We use a unified criterion, which is weaker than those used in \cite{chenl2015}, for choosing the proximal terms in the algorithmic framework \eqref{iadmm} and analyzing its convergence.
Note that in \cite{chenl2015}, compared with the condition \cite[(3.2)]{chenl2015} imposed on choosing the proximal terms, a stronger condition (\cite[(5.26) of Theorem 5.1]{chenl2015}) was used to guarantee the convergence of the algorithm.
Here, we are able to get rid of such a gap while using a weaker condition.

\item
We conduct extensive numerical experiments on solving the linear and convex quadratic semidefinite programming (SDP) problems to demonstrate how the theoretical results obtained here can be exploited to improve the numerical efficiency of the implementation on ADMM.
Based on the numerical results, together with the theoretical analysis in this paper, we are able to give a
plausible explanation as to why ADMM often performs well when the dual step-length is chosen to be the golden ratio of $1.618$.
Meanwhile, a guiding principle on choosing the step-length during the practical implementation of the algorithmic framework in \eqref{iadmm} is derived.
\end{itemize}

\smallbreak
Here we emphasize again that for solving large-scale instances of the multi-block problem \eqref{probmulti},
a successful multi-block ADMM-type algorithm must not only possess convergence guarantee but should
also numerically perform at least as fast as the directly extended ADMM.
Based on our work in this paper, we can conclude that the inexact block sGS decomposition based majorized proximal ADMM studied in \cite{chenl2015} indeed does possess those desirable properties.
Moreover, this algorithm is a versatile framework and one can apply it to problem \eqref{probmulti} in different routines other than \eqref{iadmm}.
The reason that we are more interested in the iteration scheme \eqref{iadmm} is not only for the theoretical improvement one can achieve, but also for the practical merit it features for solving large scale problems, especially
when the dominating computational cost is in performing the evaluations associated with the linear mappings $\cG$ and $\cG^*$.
A particular case in point is the following problem:
\[
\label{probcqpie}
\min_{x\in\dX}\left\{\psi(x)+\frac{1}{2}\langle x,\cQ x\rangle-\langle c,x\rangle
\mid
\cG_E x=b_E,\ \cG_I x\ge {\bf b}_I \right\},
\]
where $\cQ$, $\psi$, and $c$ have the same meaning as in \eqref{modelexampled},
$\cG_E:\dX\to\dZ_E$ and $\cG_I:\dX\to\dZ_I$ are the given linear mappings,
and $b=({\bf b}_E;{\bf b}_I)\in \dZ:=\dZ_E\times\dZ_I$ is a given vector.
By introducing a slack variable $x'\in\dZ_I$, the above problem can be equivalently reformulated as
$$
\min_{x\in\dX,x'\in\dZ_I}\left\{\psi(x)
+\frac{1}{2}\langle x,\cQ x\rangle-\langle c,x\rangle
\mid
\begin{pmatrix}
\cG_E&0
\\
\cG_I&\cI_{\dZ_I}
\end{pmatrix}\begin{pmatrix}x\\x'\end{pmatrix}
=
b, \
x'\le 0
\right\},
$$
where $\cI_{\dZ_I}$ is the identity operator in $\dZ_I$.
The corresponding dual problem in the minimization form is then given by
$$
\min_{y_1, y_2', z}\left\{
p(y_1)
+\frac{1}{2} \langle y_2,\cQ y_2\rangle
-\langle b,z\rangle
\mid
\begin{pmatrix}
y_{11}
\\
y_{12}
\end{pmatrix}
+\begin{pmatrix}
\cQ \\
0
\end{pmatrix}
y_2
-
\begin{pmatrix}
\cG_E^*&\cG_I^*
\\
0&\cI_{\dZ_2}
\end{pmatrix}z
=\begin{pmatrix}
c
\\0
\end{pmatrix}
\right\},
$$
where $y_1:=(y_{11};y_{12})\in\dX\times \dZ_I$,
$p(y_1):=\psi_1^*(y_{11})+\delta_{+}(y_{12})$ with $\delta_+$ being the indicator function of the nonnegative orthant in $\dZ_I$,
$y_2\in\dX$ and $z\in\dZ$.
It is clear that when problem \eqref{probcqpie} has
a large number of inequality constraints, the dimension of $\dZ$ can be much larger than that of $\dX$.
For such a scenario, the iteration scheme \eqref{iadmm} is more preferable since the more difficult subproblem
involving $z$ is solved only once in each iteration.

\subsubsection*{Organization}
This paper is organized as follows.
In Section \ref{sec:exam}, we present a few important classes of problems that can be
handled by \eqref{probmulti} to illustrate the wide applicability of this model.
In Section \ref{sec:malm}, we design an {\em inexact} majorized proximal ALM framework and establish its
global and local convergence properties.
In Section \ref{sec:main}, we show the key result that the sequence generated by the inexact block sGS decomposition based majorized proximal ADMM \eqref{iadmm}, together with a simple error tolerance criterion, is equivalent to
the sequence generated by the inexact {ALM framework} introduced in Section \ref{sec:malm}.
Accordingly, the convergence of the two-block ADMM with the step-length in the interval of $(0,2)$
is also established for problem \eqref{probmulti} with $s=1$.
In Section \ref{sec:num}, we conduct extensive numerical experiments on the $2$-block dual linear SDP problems and the $multi$-block dual convex quadratic SDP problems to illustrate the numerical efficiency of the proposed algorithm, as well as the impact of the step-length on its numerical performance.
A few important practical observations from the numerical results are also presented. Finally, we conclude this paper in the last section.

\subsubsection*{Notation}
\begin{itemize}
\item
Let $\dH$ and $\dH'$ be two finite-dimensional real Hilbert spaces each endowed with an inner product $\langle\cdot,\cdot\rangle$ and its induced norm $\|\cdot\|$.
We also use $\|\cdot\|$ to denote the norm induced on the product space $\dH\times\dH'$ by the inner product $\langle (\nu_1,\nu_1'),(\nu_2,\nu_2')\rangle:=\langle \nu_1,\nu_2\rangle+\langle \nu_1',\nu_2'\rangle, \forall \nu_1,\nu_2\in\dH, \forall \nu_1',\nu_2'\in\dH'$.
\item
For any linear map $\cO:\dH\to\dH'$, we use $\cO^*$ to denote its adjoint,
$\cO^{-1}$ to denote its inverse (if invertible),
$\cO^\dag$ to denote its Moore--Penrose pseudoinverse, $\range(\cO)$ to denote its range space,
and $\|\cO\|$ to denote its spectral norm.

\item
If $\dH'=\dH$ and $\cO$ is self-adjoint and positive semidefinite, there must be a unique self-adjoint positive semidefinite operator, denoted by $\cO^{1/2}$, such that $\cO^{1/2}\cO^{1/2}=\cO$.
In this case, for any $\nu,\nu'\in\dH$ we define $\langle \nu,\nu'\rangle_{\cO}:=\langle \cO \nu,\nu'\rangle$ and $\|\nu\|_\cO:=\sqrt{\langle \nu, \cO \nu\rangle}=\|\cO^{1/2}\nu\|$.
If $\cO$ is also invertible, $\cO^{1/2}$ is invertible and we use the notation that $\cO^{-1/2}:=(\cO^{1/2})^{-1}$ .

\item
Let $\cO_1,\ldots,\cO_k$ be $k$ self-adjoint linear operators, we used $\diag(\cO_1,\ldots,\cO_k)$ to denote the block-diagonal linear operator whose block-diagonal elements are in the order of $\cO_1,\ldots,\cO_k$.

\item
For any convex set $H\subseteq \dH$, we denote the relative interior of $H$ by $\ri (H)$.
When the self-adjoint linear operator $\cO:\dH\to\dH$ positive definite, we define, for any $\nu\in\dH$,
$$\dist_\cO(\nu,H):=\inf_{\nu'\in H}\|\nu-\nu'\|_\cO
\quad\mbox{and}\quad
\Pi^\cO_H(\nu)=\argmin_{\nu'\in H}\|\nu-\nu'\|_\cO.$$
If $\cO$ is the identity operator we just omit it from the notation so that $\dist(\cdot, H)$ and $\Pi_H(\cdot)$ are the standard distance function and the metric projection operator, respectively.
\item
Let $\theta:\dH\to(-\infty,+\infty]$ be an arbitrary closed proper convex function.
We use $\dom\theta$ to denote its effective domain, $\partial\theta$ to denote its subdifferential mapping, and $\theta^*$ to denote its conjugate function.
Moreover, for a given self-adjoint and positive definite linear operator $\cO: \dH \to \dH$, we use $\prox_{\theta}^\cO$ to denote the Moreau-Yosida proximal mapping of $\theta$,
which is defined by
$$
\begin{array}{l}
\prox^\cO_\theta(\nu):=\argmin\limits_{\nu'\in\dH}\left\{\theta(\nu')
+\frac{1}{2}\|\nu-\nu'\|_\cO^2\right\},\quad \forall \nu\in\dH.
\end{array}
$$

Note that the mapping $\prox^{\cO}_{\theta}$ is globally Lipschitz continuous. If $\cO$ is the identity operator, we will drop $\cO$ from $\prox^\cO_\theta(\cdot)$.
\end{itemize}

\section{Illustrative Examples}
\label{sec:exam}
In this section, we present a few important classes of concrete problems, including those in the classic core convex programming as well as those which are popularly used in various real-world applications.
As will be shown, these problems and/or their dual problems
have the form given {by} \eqref{probmulti}, so that the algorithm designed in this paper can be utilized to solve them.

\subsection{Convex Composite Quadratic Programming }
\label{secccqp}

It is well known that
 many problems are subsumed under the convex composite quadratic programming model \eqref{modelexample} or the more concrete form \eqref{probcqpie}.
 For example, it includes the important classes of
convex quadratic programming (QP), the convex quadratic semidefinite programming (QSDP),
and the convex quadratic programming and weighted centering \cite{potra} (QPWC).
As an illustration, consider a convex QSDP problem {in} the following form
\[
\label{qsdp}
\min_{X\in\S^n}\left\{\frac{1}{2}\langle X,\mathbfcal{Q} X\rangle-\langle C,X\rangle\ \Big|\
\bA_E X={\bf b}_E,\ \bA_I X\ge {\bf b}_I, \ X\in\S_+^n \right\},
\]
where $\S^n$ is the space of $n\times n$ real symmetric matrices and $\S_+^n$ is
the closed convex cone of positive semidefinite matrices in $\S^n$,
$\mathbfcal{Q}:\S^n\to\S^n$ is a positive semidefinite linear operator,
$C\in\S^n$ is a given matrix, and
$\bA_E$ and $\bA_I$ are the linear maps from $\S^n$ to the two finite-dimensional Euclidean spaces $\R^{m_E}$ and $\R^{m_I}$ that containing ${\bf b}_E$ and ${\bf b}_I$, respectively.
To solve this problem, one may consult the recently developed software QSDPNAL in Li et al. \cite{liqsdpnal} and the references therein.
The algorithm implemented in QSDPNAL is a two-phase augmented Lagrangian method in which the first phase is an inexact sGS decomposition based multi-block proximal ADMM whose convergence was established in \cite[Theorem 5.1]{chenl2015}. The solution generated in the first phase is used as the initial point to warm-start
the second phase algorithm, which is an ALM with the inner subproblem in each iteration being solved via an inexact
semismooth Newton algorithm.
In Section \ref{sec:num}, we will use the QSDP problem \eqref{qsdp} to test the algorithm studied in this paper.

Besides the core optimization problems just mentioned above, there are many problems from real-word applications that
can be cast in the form of \eqref{modelexample} and the following are only a few such examples.

\subsubsection*{Penalized and Constrained Regression Models}

In various statistical applications,
the penalized and constrained (PAC) regression \cite{james} often arises in
high-dimensional generalized linear models with linear equality and inequality constraints.
A concrete example of the PAC regression is the following constrained lasso problem
\begin{equation}
\label{eq:classo}
\min_{x\in\R^n}
\left\{\frac{1}{2}\norm{\Phi x - \eta}^2 + \lambda \norm{x}_1
\mid
A_E x=b_E,\ A_I x\ge b_I \right\},
\end{equation}
where $\Phi\in\R^{m\times n}$, $A_E\in\R^{m_E\times n}$, $A_I\in\R^{m_I\times n}$, $\eta\in\R^m$, $b_E\in\R^{m_E}$ and $b_I\in\R^{m_I}$ are the given data, and
$\lambda > 0$ is a given regularization parameter.
The statistical properties of problem \eqref{eq:classo} have been studied in \cite{james}.
For more details on the applications of the model \eqref{eq:classo}, one may refer to \cite{james,gaines} and
the references therein.
In Gaines et al. \cite{gaines}, the authors considered solving \eqref{eq:classo} by first
reformulating it as a conventional QP via letting $x=x_+ - x_-$ and adding the extra constraints $x_+\ge 0$, $x_-\ge 0$, and then applying the primal ADMM to solve the conventional QP, in which all the subproblems should be solved exactly (or to very high accuracy) by iterative methods.
Such a combination may perform well for low dimensional problems with moderate sample sizes.
But for the more challenging and interesting high-dimensional cases where $n$ is extremely large and $m\ll n$,
the approach in \cite{gaines} is likely to face severe numerical difficulties
because of the presence of a huge number of constraints.
Fortunately,
the algorithm we designed in this paper can precisely handle those difficult cases
because the large linear systems associated with the huge number of constraints
are not required to solve to very high accuracy by an iterative solver.

\subsubsection*{Noisy Matrix Completion and Rank-Correction Step}
In Miao et al. \cite{miao}, the authors introduced a rank-correction step for matrix completion with fixed basis coefficients to overcome the shortcomings of the nuclear norm penalization model for such problems.
Let $\overline X\in{\mathds V}^{n_1\times n_2}$ (where ${\mathds V}^{n_1\times n_2}$ may represent the space of $n_1\times n_2$ real or complex matrices or the space of $n\times n$ real symmetric or Hermitian matrices) be the unknown true low-rank matrix and $\widetilde X_m$ is an initial estimator of $\overline X$ from the nuclear norm penalized least squares model. The rank-correction step is to solve the following convex optimization problem
\[
\label{rankcor}
\begin{array}{cl}
\ds
\min_{X}&
\frac{1}{2m}\|y-{\mathbfcal P}_o(X)\|^2+\rho_m\left(
\|X\|_*-\langle F(\widetilde X_m), X\rangle
\right)
\\[3mm]
\mbox{s.t.}&
{\mathbfcal P}_A(X)={\mathbfcal P}_A(\overline X),\
\|{\mathbfcal P}_B(X)\|_\infty\le b,
\end{array}
\]
where $y={\mathbfcal P}_o(\overline X)+\epsilon\in\R^m$ is the observed data for the matrix $\overline X$, ${\mathbfcal P}_o$ is the linear map corresponding to the observed entries, $\epsilon\in\R^m$ is the unknown error, $\rho_m>0$ is
a given penalty parameter,
and $F:{\mathds V}^{n_1\times n_2}\to {\mathds V}^{n_1\times n_2}$ is a spectral operator \cite{ding} whose precise definition can be found in \cite[Section 5]{miao}. Here the constraints ${\mathbfcal P}_A(X)={\mathbfcal P}_A(\overline X)$ and $\|{\mathbfcal P}_B(X)\|_\infty\le b$ represent the fixed elements and bounded elements of $X$, respectively.
 If $F$ and the equality constraints are vacuous, problem \eqref{rankcor} is exactly the noise matrix completion model considered in \cite{wainwright}, and a similar matrix completion model can be found in \cite{klopp}.
One may view \eqref{rankcor} as an instance of problem \eqref{modelexample}, and
whose corresponding linear operator $\cQ$ admits a very simple form.

\subsection{Two-Block Problems}
Next we present a few important classes of two-block problems whose objective functions contain a linear part.
\subsection*{Semidefinite Programming}
One of the most prominent examples of problem \eqref{probmulti} with 2 blocks of variables (i.e., $s=1$) is the
dual linear semidefinite programming (SDP) problem given by
\begin{eqnarray}
\label{probsdp}
\min_{Y,\, {\bf z}}\big\{\delta_{\S_+^n}(Y)-\langle {\bf b},{\bf z}\rangle\;|\;Y+\bA^*{\bf z}=C\big\},
\end{eqnarray}
where $\bA: \S^n\to \R^m$ is a given linear map, and ${\bf b}\in \R^m$ and $C\in \S^n$ are given data.
The notation $\delta_{\S_+^n}$ denotes the indicator function of $\S_+^n$.
For problem \eqref{probsdp}, various ADMM algorithms have been employed to solve the problem.
As far as we are aware of,
the classic two-block ADMM with unit step-length was first employed in Povh et al. \cite{povh}
under the name of boundary point method for solving
the SDP problem \eqref{probsdp}. It was later
extended in Malick et al. \cite{malick} with a convergence proof.
The ADMM approach was later used in the software SDPNAL developed by Zhao et al. \cite{zhao}
to warm-start a semismooth Newton method based ALM for solving problem \eqref{probsdp}.

In section \ref{sec:num}, we will conduct extensive numerical experiments on solving a few classes of linear SDP problems
via the two-block ADMM algorithm but with the dual step-length being chosen in the interval $(0,2)$, as it is guaranteed by this paper.

\subsubsection*{Equality Constrained Problems}
Consider the equality constrained problem
\[
\label{bp}
\min_{x\in\dX}\big\{\theta(x) \mid \cG x=b\big\},
\]
where
$\cG:\dX\to\R^m$ is a linear map, $b\in\R^m$ is a given vector, and
 $\theta:\dX\to(-\infty,+\infty]$ is a simple closed proper convex function such that
its proximal mapping can be computed efficiently.
The dual problem of \eqref{bp} can be written in the minimization form as
\[
\label{dbp}
\min_{y,z}\left\{
\theta^*( y)
-\langle b,z\rangle
\mid
 y-\cG^*z=0 \right\}.
\]
A concrete example of problem \eqref{bp}, with $\dX:=\R^n$
and $\theta(x):=\|x\|_1$, is the basis pursuit (BP) problem \cite{bpchen}, which has been wildly used in sparse signal recovery and image restoration. Another example of \eqref{bp} is
the nuclear norm based matrix completion problem
for which $\dX:=\R^{n_1\times n_2}$ and $\theta(x)=\|x\|_{*}$. Moreover, the so called tensor completion problem \cite{jiliu} also falls into this category.

We note that for the application problems just mentioned above, the dimension of $\dX$ is generally much larger than $m$, i.e., the dimension
of the linear constraints. Therefore from the computational viewpoint, it is generally more economical to apply
the two-block ADMM to the dual problem \eqref{dbp} instead of the
primal problem \eqref{bp} (by introducing an extra variable $x'$ and adding the condition $x-x'=0$)
 because the former will solve smaller $m\times m$ linear systems
in each iteration whereas the latter will correspondingly need to solve much larger linear systems.

\subsubsection*{Composite Problems}
A composite problem can take the following form
\[
\label{scp}
\min_{z\in\dZ}f\left(c-\cG^* z\right),
\]
where $f:\dZ\to(-\infty,+\infty]$ is a (possibly nonsmooth) closed proper convex function whose proximal mapping
can be computed efficiently, $\cG:\dZ\to\dX$ is a given linear operator and $c\in \dX$ is given data.
By introducing a slack variable, problem \eqref{scp} can be recast as
$$
\min_{y,z}\Big\{ f(y)\ |\ y+\cG^* z=c\Big\}.
$$
Problem \eqref{scp} contains many real-world applications such as the well-known least absolute deviation (LAD) problem (also known as
 least absolute error (LAE), least absolute value (LAV), least absolute residual (LAR), sum of absolute deviations, or the $\ell_1$-norm condition).
The model \eqref{scp} also includes the Huber fitting problem \cite{huber}.
We shall not continue with more examples
as there are too many applications to be listed here to serve as a literature review.

\subsubsection*{Consensus Optimization}
Consider the following problem
\[
\label{consensus}
\min\limits_{z\in\dZ} \left\{\sum_{i=1}^n f_i\left(\cG_i^* z\right)
\right\},
\]
where each $f_i$ is a closed proper convex function and each $\cG_i:\dY_i\to \dZ$ is a linear operator. The model \eqref{consensus} includes the global variable consensus optimization and general variable optimization, as well as their regularized versions (see \cite[Section 7]{boyd}), which have been well applied in many areas such as machine learning, signal processing and wireless communication \cite{boyd,bert89,teo,schizas,zhu2010}.
In the consensus optimization setting, it is usually preferable to solve subproblems each involving a subset
of the component functions $f_1,\ldots, f_n$ instead of all of them. Therefore, one can
 equivalently
recast problem \eqref{consensus} as
\[
\label{consp}
\min_{y,z}\left\{ \sum_{i=1}^n \, f_i(y_i) \mid y_i- \cG^*_i z=0, \ 1\le i\le n\right\}.
\]
Obviously, when applying the two-block ADMM to solve \eqref{consp}, the subproblem with respect to $y$ is separated into
$n$ independent problems that can be solved in parallel.
In \cite{boyd}, the variable $z$ in \eqref{consp} is called the central collector.
Besides, the network based decentralized and distributed computation of the consensus optimization, such as the distributed lasso in \cite{mateos}, also falls in the problems setting in this paper.

\section{An Inexact Majorized ALM with Indefinite Proximal Terms}
\label{sec:malm}

In this section, we present an inexact majorized indefinite-proximal ALM.
This algorithm, as well as its global and local convergence properties, not only constitutes a generalization of the original (proximal) ALM, but also paves the way for us to establish its equivalence relationship with the
inexact block sGS decomposition based indefinite-proximal multi-block ADMM in the next section.

Let $\dX$ and $\dW$ be two finite-dimensional real Hilbert spaces each endowed with an inner product $\langle\cdot,\cdot\rangle$ and its induced norm $\|\cdot\|$.
We consider the following fairly general linearly constrained convex optimization problem
\[
\label{prob1}
\min_{w\in\dW}
\Big\{
\varphi(w)+h(w)
\mid
\cA^* w=c
\Big\},
\]
where
$\varphi:\dW\to(-\infty,+\infty]$ is a closed proper convex function,
$h:\dW\to(-\infty,+\infty)$ is a continuously differentiable convex function whose gradient is Lipschitz continuous,
$\cA:\dX\to\dW$ is a linear mapping and $c\in\dX$ is the given data.
The Karush-Kuhn-Tucker (KKT) system of problem \eqref{prob1} is given by
\[
\label{kkt1}
0\in\partial\varphi(w)+\nabla h(w)+\cA x,
\quad
\quad \cA^* w-c=0.
\]
For any $(w,x)\in \dW\times\dX$ that solve the KKT system \eqref{kkt1}, $w$ is a solution to problem \eqref{prob1} while $x$ is a dual solution of \eqref{prob1}.

The fact that the gradient of $h$ is Lipschitz continuous implies that there exists a self-adjoint positive semidefinite linear operator ${\what\Sigma_h}:\dW\to\dW$, such that for any $w'\in\dW$, $h(w)\le \what h(w,w')$, where
\[
\label{majorize}
\what h(w,w'):=h(w')+\langle \nabla h(w'), w-w'\rangle+\frac{1}{2}\|w-w'\|^2_{\what\Sigma_h},\quad\forall w\in\dW.
\]
We call the function
$\what h(\cdot,w'):\dW\to(-\infty,+\infty)$ a majorization of $h$ at $w'$.
The following result, whose proof can be found in \cite[Lemma 3.2]{zhangning}, will be used later.
\begin{lemma}
\label{lemmazn}
Suppose that \eqref{majorize} holds for any given $w'\in\dW$. Then, it holds that
$$
\Big\langle \nabla h(w)-\nabla h(w'),
w''- w'
\Big\rangle
\ge-\frac{1}{4}\|w-w''\|^2_{\what\Sigma_h},
\quad\forall w,w',w''\in\dW.
$$
\end{lemma}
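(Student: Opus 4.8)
The plan is to read the majorization hypothesis \eqref{majorize} as a one-sided quadratic upper bound on the Bregman divergence of $h$ and to convert it into a matrix-weighted co-coercivity estimate for $\nabla h$. Writing $D_h(u,v):=h(u)-h(v)-\inprod{\nabla h(v)}{u-v}$, convexity of $h$ gives $D_h(u,v)\ge 0$, while \eqref{majorize} gives $D_h(u,v)\le\frac12\norm{u-v}_{\what\Sigma_h}^2$ for all $u,v$. Throughout I abbreviate $a:=\nabla h(w)-\nabla h(w')$.

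First I would establish the co-coercivity inequality $\inprod{a}{w-w'}\ge\norm{a}_{\what\Sigma_h^\dag}^2$. To do this, fix $w'$ and consider the convex function $g(u):=h(u)-\inprod{\nabla h(w')}{u}$, which has $\nabla g(u)=\nabla h(u)-\nabla h(w')$, so that $w'$ minimizes $g$ and $\nabla g(w)=a$. The upper bound \eqref{majorize}, applied to $g$ at the point $w$, reads $g(u)\le g(w)+\inprod{a}{u-w}+\frac12\norm{u-w}_{\what\Sigma_h}^2$ for every $u$; minimizing the right-hand side over $u$ and comparing with $g(w')=\min_u g(u)$ yields $g(w)-g(w')\ge\frac12\norm{a}_{\what\Sigma_h^\dag}^2$, i.e. $D_h(w,w')\ge\frac12\norm{a}_{\what\Sigma_h^\dag}^2$. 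Interchanging the roles of $w$ and $w'$ gives $D_h(w',w)\ge\frac12\norm{a}_{\what\Sigma_h^\dag}^2$, and adding the two bounds together with the identity $D_h(w,w')+D_h(w',w)=\inprod{a}{w-w'}$ produces the claimed co-coercivity estimate.

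Next I would bound the remaining cross term by the generalized Cauchy--Schwarz inequality $\inprod{a}{w-w''}\le\norm{a}_{\what\Sigma_h^\dag}\norm{w-w''}_{\what\Sigma_h}$, which is legitimate precisely because the minimization step above forces $a\in\range(\what\Sigma_h)$. Splitting the target inner product as $\inprod{a}{w''-w'}=\inprod{a}{w-w'}-\inprod{a}{w-w''}$ and inserting the two estimates gives $\inprod{a}{w''-w'}\ge\norm{a}_{\what\Sigma_h^\dag}^2-\norm{a}_{\what\Sigma_h^\dag}\norm{w-w''}_{\what\Sigma_h}$. With $\alpha:=\norm{a}_{\what\Sigma_h^\dag}$ and $\beta:=\norm{w-w''}_{\what\Sigma_h}$ the right-hand side equals $\alpha^2-\alpha\beta=(\alpha-\tfrac12\beta)^2-\tfrac14\beta^2\ge-\tfrac14\beta^2$, which is exactly the asserted bound $-\frac14\norm{w-w''}_{\what\Sigma_h}^2$.

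The main obstacle I anticipate is the genuine positive \emph{semi}definiteness of $\what\Sigma_h$, which prevents a direct inversion and makes the naive argument (keeping only $D_h\ge0$ and bounding the cross term crudely) deliver only the weaker constant $\frac12$. Getting the sharp $\frac14$ requires the co-coercivity refinement, and the care needed there is to justify that the infimum of the majorizing quadratic is finite and attained: this is where one must verify $a\in\range(\what\Sigma_h)$ (otherwise the quadratic is unbounded below, contradicting that $g$ attains its minimum at $w'$) and replace the inverse of $\what\Sigma_h$ by its Moore--Penrose pseudoinverse $\what\Sigma_h^\dag$ throughout. Once the range condition is in hand, both the co-coercivity inequality and the generalized Cauchy--Schwarz step are routine, and the final completion of the square is immediate.
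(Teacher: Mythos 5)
Your proof is correct. The paper does not prove this lemma itself --- it defers to \cite[Lemma 3.2]{zhangning} --- and your argument is essentially the standard one used there: first upgrade the majorization bound to the weighted co-coercivity (Baillon--Haddad-type) inequality $\langle \nabla h(w)-\nabla h(w'),w-w'\rangle\ge\|\nabla h(w)-\nabla h(w')\|^2_{\what\Sigma_h^{\dag}}$ via the auxiliary function $g(u)=h(u)-\langle\nabla h(w'),u\rangle$, then split off the cross term with the generalized Cauchy--Schwarz inequality and complete the square to obtain the constant $\tfrac14$. Your handling of the merely positive \emph{semi}definite $\what\Sigma_h$ is also sound: boundedness below of the majorizing quadratic (which dominates $g\ge g(w')$) does force $\nabla h(w)-\nabla h(w')\in\range(\what\Sigma_h)$, which legitimizes both the pseudoinverse norm and the Cauchy--Schwarz step; an equivalent alternative would have been to run the argument with $\what\Sigma_h+\varepsilon\cI\succ 0$ and let $\varepsilon\downarrow 0$.
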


Let $\sigma>0$ be a given penalty parameter.
The majorized augmented Lagrangian function associated with problem \eqref{prob1} is defined by
\[
\label{alf1}
\begin{array}{r}
\cL_\sigma(w;(x,w')):=\varphi(w)+\what h(w,w')+\langle \cA^*w-c,x\rangle+\frac{\sigma}{2}\|\cA^*w-c\|^2,\quad
\\[3mm]
\forall(w,x,w')\in\dW\times\dX\times\dW.
\end{array}
\]
In the following, we propose an inexact majorized indefinite-proximal ALM in Algorithm \ref{alg:alm} for solving problem \eqref{prob1}.
This algorithm is an extension of the proximal method of multipliers developed by Rockafellar \cite{roc76b}, with
new ingredients added based on the recent progress on using proximal terms which are not necessarily positive definite \cite{fazel13,limin,zhangning}
and the implementable inexact minimization criteria studied in \cite{chenl2015}.
For the convenience of later convergence analysis, we make the following blanket assumption.
\begin{assumption}
\label{ass:basic}
The solution set to the KKT system \eqref{kkt1} is nonempty and $\cS:\dW\to\dW$ is a given self-adjoint (not necessarily positive semidefinite) linear operator such that
\[
\label{proxi}
\cS\succeq -\frac{1}{2}{\what\Sigma_h}
\quad\mbox{and}\quad
\frac{1}{2}{\what\Sigma_h}+\sigma\cA\cA^*+\cS\succ 0.
\]
\end{assumption}
We are now ready to present Algorithm \ref{alg:alm} that will be studied in this section.

%%%% Algorithm %%
\makeatletter
\renewcommand
\thealgorithm{{iPALM}}
\makeatother
\begin{algorithm}[H]
\caption{An inexact majorized indefinite-proximal ALM}
\label{alg:alm}
\normalsize
Let $\{\varepsilon_k\}$ be a summable sequence of nonnegative numbers.
Choose an initial point $(x^0,w^0)\in\dX\times\dW$.
For $k=0,1,\ldots$, perform the following steps in each iteration.
\begin{description}
\item[\bf Step 1.]
Compute
\[
\label{wkp1}
w^{k+1}\approx\overline w^{k+1}:=\argmin_{w\in\dW}\left\{\cL_\sigma\big(w;(x^k,w^k)\big)
+\frac{1}{2}\|w-w^k\|_{\cS}^2\right\}
\]
such that there exists a vector $d_k\in\dW$ satisfying
$\|d^k\|\le\varepsilon_k$
and
\[
\label{dk}
d^k\in\partial_w\cL_\sigma\big(w^{k+1};(x^k,w^k)\big)+\cS(w^{k+1}-w^k).
\]
\item[\bf Step 2.]
Compute $x^{k+1}:=x^k+\tau\sigma(\cA^*w^{k+1}-c)$ with $\tau\in(0,2)$ being the step-length.
\end{description}
\end{algorithm}

We shall next proceed to analyze the global convergence, the rate of local convergence and the iteration complexity of Algorithm \ref{alg:alm}. For notational convenience, we collect the total quadratic information in the objective function of \eqref{wkp1} as the following linear operator
\[
\label{defM}
\cM :={\what\Sigma_h}+\cS+\sigma\cA\cA^*.
\]
The following result presents two important inequalities for the subsequent analysis.
The first one characterizes the distance (with $\cM$ being involved in the metric) from the computed solution to the true solution of the subproblem in \eqref{wkp1}, while the second one presents a non-monotone descent property about the sequence generated by Algorithm \ref{alg:alm}.

\begin{proposition}
Suppose that Assumption \ref{ass:basic} holds.
Then,
\begin{itemize}
\item[\bf(a)]
the sequence $\{(x^k,w^k)\}$ generated by Algorithm \ref{alg:alm} and the auxiliary sequence $\{\overline w^k\}$ defined in \eqref{wkp1} are well-defined, and it holds that
\[\label{estd}
\|w^{k+1}-\overline w^{k+1}\|^2_{\cM}
\; \le\;
\langle d^k, w^{k+1}-\overline w^{k+1}\rangle;
\]
\item[\bf(b)]
for any given $(x^*,w^*)\in\dX\times\dW$ that solves the KKT system \eqref{kkt1} and $k\ge 1$, it holds that
\[
\label{ineq:mainb}
\begin{array}{l}
\ds
\left(\frac{1}{2\tau\sigma}\|x_e^{k+1}\|^2
+\frac{1}{2}\|w_e^{k+1}\|^2_{{\what\Sigma_h}+\cS}\right)
-\left(\frac{1}{2\tau\sigma}\|x_e^k\|^2
+\frac{1}{2}\|w_e^k\|^2_{{\what\Sigma_h}+\cS}\right)
\\[4mm]
\le
\ds
-\left(\frac{(2-\tau)\sigma}{2}\| \cA ^*w^{k+1}_e \|^2
+\frac{1}{2}\|w^{k+1}-w^k\|^2_{\frac{1}{2}{\what\Sigma_h}+\cS}
-\langle d^k, w^{k+1}_e\rangle
\right),
\end{array}
\]
where $x_e:=x-x^*$, $\forall x\in\dX$ and $w_e:=w-w^*$, $\forall w\in\dW$.
\end{itemize}
\end{proposition}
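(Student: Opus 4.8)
My plan is to obtain both parts from the subdifferential characterization of the subproblem \eqref{wkp1}, the monotonicity of $\partial\varphi$, and the majorization estimate in Lemma \ref{lemmazn}. I first record that Assumption \ref{ass:basic} makes the operator $\cM$ in \eqref{defM} positive definite: writing $\cM=\tfrac12{\what\Sigma_h}+\big(\tfrac12{\what\Sigma_h}+\cS+\sigma\cA\cA^*\big)$ exhibits $\cM$ as the sum of a positive semidefinite operator and the positive definite operator appearing in \eqref{proxi}. Consequently the objective in \eqref{wkp1} is strongly convex, so the exact minimizer $\overline w^{k+1}$ exists and is unique; since $d^k=0$ is admissible, the choice $w^{k+1}=\overline w^{k+1}$ always meets the inexact criterion \eqref{dk}, and hence the iterates of Algorithm \ref{alg:alm} together with the auxiliary sequence $\{\overline w^k\}$ are well-defined.

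For part (a), I would write the optimality condition for the exact minimizer, $0\in\partial\varphi(\overline w^{k+1})+\nabla h(w^k)+{\what\Sigma_h}(\overline w^{k+1}-w^k)+\cA x^k+\sigma\cA(\cA^*\overline w^{k+1}-c)+\cS(\overline w^{k+1}-w^k)$, and compare it with the inexact condition \eqref{dk}, which is identical but with $d^k$ on the right-hand side. Subtracting the two inclusions and selecting subgradients $u^{k+1}\in\partial\varphi(w^{k+1})$ and $\overline u^{k+1}\in\partial\varphi(\overline w^{k+1})$ gives $d^k=(u^{k+1}-\overline u^{k+1})+\cM(w^{k+1}-\overline w^{k+1})$. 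Taking the inner product with $w^{k+1}-\overline w^{k+1}$ and dropping the nonnegative term $\langle u^{k+1}-\overline u^{k+1},\,w^{k+1}-\overline w^{k+1}\rangle$ (monotonicity of $\partial\varphi$) yields \eqref{estd} at once.

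For part (b), the organizing device is the ``shadow multiplier'' $\widetilde x^{k+1}:=x^k+\sigma(\cA^*w^{k+1}-c)$, which satisfies $\cA x^k+\sigma\cA(\cA^*w^{k+1}-c)=\cA\widetilde x^{k+1}$, so the inexact condition \eqref{dk} reads $u^{k+1}+\nabla h(w^k)+{\what\Sigma_h}(w^{k+1}-w^k)+\cA\widetilde x^{k+1}+\cS(w^{k+1}-w^k)=d^k$ for some $u^{k+1}\in\partial\varphi(w^{k+1})$. At the reference KKT point one has $u^*:=-\nabla h(w^*)-\cA x^*\in\partial\varphi(w^*)$ and $\cA^*w^*=c$. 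Monotonicity of $\partial\varphi$ then gives $\langle u^{k+1}-u^*,\,w_e^{k+1}\rangle\ge 0$, which after substituting the expression for $u^{k+1}$ becomes $\langle d^k,\,w_e^{k+1}\rangle\ge T_1+T_2+T_3+T_4$, where $T_1=\langle\nabla h(w^k)-\nabla h(w^*),\,w_e^{k+1}\rangle$, $T_2=\langle{\what\Sigma_h}(w^{k+1}-w^k),\,w_e^{k+1}\rangle$, $T_4=\langle\cS(w^{k+1}-w^k),\,w_e^{k+1}\rangle$, and $T_3=\tfrac1\sigma\langle\widetilde x^{k+1}-x^*,\,\widetilde x^{k+1}-x^k\rangle$ arises from $\langle\cA(\widetilde x^{k+1}-x^*),w_e^{k+1}\rangle$ after using $\cA^*w_e^{k+1}=\cA^*w^{k+1}-c=\tfrac1\sigma(\widetilde x^{k+1}-x^k)$. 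I would then evaluate $T_2$ and $T_4$ by the three-point identity $\langle\cO(a-b),a-c\rangle=\tfrac12\big(\|a-b\|_\cO^2+\|a-c\|_\cO^2-\|b-c\|_\cO^2\big)$ with $a=w^{k+1},b=w^k,c=w^*$, and bound $T_1$ from below by $-\tfrac14\|w^{k+1}-w^k\|^2_{\what\Sigma_h}$ using Lemma \ref{lemmazn} (with $w=w^k$, $w'=w^*$, $w''=w^{k+1}$). The $-\tfrac14\|w^{k+1}-w^k\|^2_{\what\Sigma_h}$ from $T_1$ combines with the $+\tfrac12\|w^{k+1}-w^k\|^2_{\what\Sigma_h}$ produced by $T_2$, leaving $+\tfrac14\|w^{k+1}-w^k\|^2_{\what\Sigma_h}$, which together with the $\cS$-part from $T_4$ assembles $\tfrac12\|w^{k+1}-w^k\|^2_{\frac12{\what\Sigma_h}+\cS}$; the remaining $w$-terms telescope into $\tfrac12\|w_e^{k+1}\|^2_{{\what\Sigma_h}+\cS}-\tfrac12\|w_e^k\|^2_{{\what\Sigma_h}+\cS}$.

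The last and most delicate step is the conversion of the coupling term $T_3$ from the shadow multiplier $\widetilde x^{k+1}$ to the actual multiplier $x$. Using $\widetilde x^{k+1}-x^k=\tfrac1\tau(x^{k+1}-x^k)$, hence $\widetilde x^{k+1}-x^*=x_e^k+\tfrac1\tau(x_e^{k+1}-x_e^k)$, and writing $\Delta:=x_e^{k+1}-x_e^k=x^{k+1}-x^k$, a short computation based on $\langle x_e^k,\Delta\rangle=\tfrac12\big(\|x_e^{k+1}\|^2-\|x_e^k\|^2-\|\Delta\|^2\big)$ together with $\|\Delta\|^2=\tau^2\sigma^2\|\cA^*w_e^{k+1}\|^2$ produces exactly $T_3=\tfrac{1}{2\tau\sigma}\big(\|x_e^{k+1}\|^2-\|x_e^k\|^2\big)+\tfrac{(2-\tau)\sigma}{2}\|\cA^*w_e^{k+1}\|^2$. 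Substituting all pieces into $\langle d^k,w_e^{k+1}\rangle\ge T_1+T_2+T_3+T_4$ and rearranging delivers \eqref{ineq:mainb}. I expect this bookkeeping to be the main obstacle, in particular pinning down the exact coefficient $(2-\tau)/2$ of $\|\cA^*w_e^{k+1}\|^2$, which is precisely the source of the admissible range $\tau\in(0,2)$; the rest of the argument is assembling the quadratic pieces and keeping track of the inexactness term $d^k$.
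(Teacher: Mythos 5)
Your proposal is correct and follows essentially the same route as the paper: the inclusion \eqref{dknew}, monotonicity of $\partial\varphi$ against the KKT point, Lemma \ref{lemmazn}, the three-point identity for the $\what\Sigma_h+\cS$ term, and the relation $\cA^*w_e^{k+1}=\frac{1}{\tau\sigma}(x^{k+1}-x^k)$; your coefficient computation for $T_3$, yielding $\frac{1}{2\tau\sigma}(\|x_e^{k+1}\|^2-\|x_e^k\|^2)+\frac{(2-\tau)\sigma}{2}\|\cA^*w_e^{k+1}\|^2$, matches the paper's bookkeeping exactly. The only cosmetic differences are your use of the unit-step shadow multiplier $\widetilde x^{k+1}$ (the paper splits the same term as $\cA x_e^k+\sigma\cA\cA^*w_e^{k+1}$ directly) and, in part (a), your direct appeal to monotonicity of $\partial\varphi$ in place of the paper's invocation of the firm nonexpansiveness of $\prox_\varphi^{\cM}$ — these are equivalent.
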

\begin{proof}
{\bf (a)}
From \eqref{proxi} and \eqref{defM} we know that
$\cM\succ 0$.
Hence, each of the subproblems in Algorithm \ref{alg:alm} is strongly convex so that each $\overline w^{k+1}$ is uniquely determined by $(x^k,w^k)$.
Note that, for the given $\varepsilon_k\ge 0$, one can always find a certain $w^{k+1}$ such that $\|d^k\|\le\varepsilon_k$ with $d^k$ being given in \eqref{dk}, see \cite[Lemma 4.5]{duthesis}.
Hence, Algorithm \ref{alg:alm} is well-defined.
According to \eqref{majorize} and \eqref{alf1},
the objective function in \eqref{wkp1} is given by
$$
\varphi(w)+\langle \nabla h(w^k)+\cA x^k,w\rangle
+\frac{\sigma}{2}\|\cA^*w-c\|^2
+\frac{1}{2}\|w-w^k\|_{{\what\Sigma_h}+\cS}^2,
$$
so that \eqref{dk} implies that
\[
\label{dknew}
 d^k\in\partial \varphi(w^{k+1})+ \nabla h(w^k)+\cA x^k
+\sigma \cA(\cA^*w^{k+1}-c)
+({\what\Sigma_h}+\cS)(w^{k+1}-w^k).
\]
Therefore, from the definitions of the Moreau-Yosida proximal mapping and $\cM$ in \eqref{defM}, one has that
$$
w^{k+1}=\prox_\varphi^\cM\left(\cM^{-1}\big[d^k-(\nabla h(w^k)+\cA x^k-{\sigma}\cA c-({\what\Sigma_h}+\cS)w^k)\big]\right).
$$
Consequently, by the Lipschitz continuity of $\prox_{\varphi}^{\cM}$ \cite[Proposition 2.3]{lemarechal} and the fact that $d^k$ can be set as zero if $w^{k+1}=\overline w^{k+1}$, one can readily get \eqref{estd}.

\smallskip
\noindent{\bf (b)}
Let $(x^*,w^*)\in\dX\times\dW$ be an arbitrary solution to the KKT system \eqref{kkt1}. Obviously, one has that
$-\nabla h(w^*)-\cA x^*\in\partial\varphi(w^*)$ and $\cA^* w^* = c$. This, together with \eqref{dknew} and the maximal monotonicity of $\partial\varphi$, implies that
$$
\langle d^k- \nabla h(w^k)+\nabla h(w^*)
-\cA x_e^k
-\sigma \cA(\cA^*w^{k+1}-c)
-({\what\Sigma_h}+\cS)(w^{k+1}-w^k)
, w^{k+1}_e
\rangle
\ge 0.
$$
Therefore, by using the fact that
\[
\label{eq:wkxk}
\cA^*w^{k+1}-c=\cA^*w_e^{k+1}=\frac{1}{\tau\sigma}(x^{k+1}-x^k),\] one can obtain from the above inequality and Lemma \ref{lemmazn} that
\[
\label{keyineq1}
\begin{array}{l}
\left\langle d^k, w^{k+1}_e \right\rangle
-\frac{1}{\tau\sigma}\left\langle x_e^k, x^{k+1}-x^k\right\rangle
-\sigma \|\cA^*w_e^{k+1}\|^2
-\left\langle ({\what\Sigma_h}+\cS)(w^{k+1}-w^k)
, w^{k+1}_e \right\rangle
\\[4mm]
\ge \left\langle \nabla h(w^k) - \nabla h(w^*), w^{k+1}_e
\right\rangle
\ge
-\frac{1}{4}\|w^{k+1}-w^k\|_{\what\Sigma_h}^2
=
-\frac{1}{2}\|w^{k+1}-w^k\|_{\frac{1}{2}\what\Sigma_h}^2.
\end{array}
\]
Note that
$\langle x_e^k, x^{k+1}-x^k\rangle
=\frac{1}{2}\|x_e^k\|^2-\frac{1}{2}\|x_e^k\|^2-
\frac{1}{2}\|x^{k+1}-x^k\|^2$
and
$$
\left\langle ({\what\Sigma_h}+\cS)(w^{k+1}-w^k), w^{k+1}_e\right\rangle
=\frac{1}{2}\|w^{k+1}-w^k\|^2_{{\what\Sigma_h}+\cS}
+\frac{1}{2}\|w_e^{k+1}\|^2_{{\what\Sigma_h}+\cS}
-\frac{1}{2}\|w_e^k\|^2_{{\what\Sigma_h}+\cS}.
$$
Then, \eqref{ineq:mainb} follows form \eqref{keyineq1} and this completes the proof of the proposition.
\qed
\end{proof}

\subsection{Global Convergence}
For the convenience of our analysis, we define the following two linear operators
\[
\label{defXT}
\Xi := \tau\sigma\left({\frac{1}{2}{\what\Sigma_h}+\cS+\frac{(2-\tau)\sigma}{6}\cA\cA^*}\right)
\quad\mbox{and}\quad
\Theta := \tau\sigma\left(\,{\what\Sigma_h}+\cS+\frac{(2-\tau)\sigma}{3}\cA\cA^*\right),
\]
which will be used in defining metrics in $\dW$.
Note that $\tau\in(0,2)$. If \eqref{proxi} in Assumption \ref{ass:basic} holds, one has that
\[
\label{eq-MM}
\left\{
\begin{array}{ll}
\frac{1}{\tau\sigma}\Xi&=
\frac{2-\tau}{6}\left(\frac{1}{2}{\what\Sigma_h}+\cS+\sigma\cA\cA^*\right)
+\left(1-\frac{2-\tau}{6}\right)
\Big(\frac{1}{2}{\what\Sigma_h}+\cS\Big)
\\[4mm]
&\succeq \frac{2-\tau}{6}\Big(\frac{1}{2}{\what\Sigma_h}+\cS+\sigma\cA\cA^*\Big)\succ 0,
\\[4mm]
\frac{1}{\tau\sigma}
\Theta&=\frac{1}{\tau\sigma}\Xi+\frac{1}{2}{\what\Sigma_h}+\frac{(2-\tau)\sigma}{6}\cA\cA^*\succeq \frac{1}{\tau\sigma}\Xi\succ 0.
\end{array}
\right.
\]
Moreover, we define the block-diagonal linear operator
$ \Omega:\dU\to\dU$ by
\[
\label{omega}
\Omega(x;w):=\left(x; \Theta^{\frac{1}{2}} w\right),\quad\forall (x,w)\in\dX\times\dW,
\]
where $\Theta$ is given by \eqref{defXT}.
Now we establish the convergence theorem of
Algorithm \ref{alg:alm}.
The corresponding proof mainly follows from the proof of \cite[Theorem 5.1]{chenl2015} for the convergence of an inexact majorized semi-proximal ADMM and the following result on quasi-Fej\'er monotone sequence will be used.

\begin{lemma}
\label{lemma:sq-sum}
Let $\{a_k\}_{k\ge0}$ be a sequence of nonnegative real numbers sequence satisfying $a_{k+1}\le a_k+\varepsilon_k$ for all $k\ge 0$, where $\{\varepsilon_k\}_{k\ge 0}$ is a nonnegative and summable sequence of real numbers. Then the $\{a_k\}$ converges to a unique limit point.
\end{lemma}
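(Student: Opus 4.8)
The plan is to convert the quasi-monotone recursion into a genuinely non-increasing sequence by absorbing the tails of the summable perturbation. First I would introduce the tail sum
\[
t_k := \sum_{j=k}^{\infty}\varepsilon_j,
\]
which is finite for every $k$ precisely because $\{\varepsilon_k\}$ is summable, is nonnegative, and tends to $0$ as $k\to\infty$. The key elementary observation I would record is the telescoping identity $\varepsilon_k = t_k - t_{k+1}$, which is immediate from the definition of $t_k$.

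Next I would define the auxiliary sequence $b_k := a_k + t_k$ and show it is non-increasing. Combining the hypothesis $a_{k+1}\le a_k+\varepsilon_k$ with $\varepsilon_k = t_k - t_{k+1}$ yields
\[
b_{k+1} = a_{k+1}+t_{k+1}\le a_k+\varepsilon_k+t_{k+1} = a_k + t_k = b_k.
\]
Since $a_k\ge 0$ and $t_k\ge 0$, the sequence $\{b_k\}$ is bounded below by $0$. A non-increasing sequence in $\R$ that is bounded below converges, so $b_k\to b_\infty$ for some $b_\infty\ge 0$.

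Finally I would recover the claim for $\{a_k\}$ itself by writing $a_k = b_k - t_k$ and passing to the limit: since $b_k\to b_\infty$ and $t_k\to 0$, we conclude $a_k\to b_\infty$. Because a convergent real sequence has exactly one limit, this $b_\infty$ is the unique limit point, which is the assertion of the lemma. There is no real obstacle in this argument; the only steps that warrant a moment's care are the finiteness of $t_k$ and the telescoping identity, both of which follow directly from summability. The single conceptual ingredient doing all the work is the passage from the perturbed monotonicity $a_{k+1}\le a_k+\varepsilon_k$ to the exact monotonicity of the shifted sequence $\{b_k\}$.
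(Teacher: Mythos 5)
Your proof is correct: the tail-sum trick turning $a_{k+1}\le a_k+\varepsilon_k$ into exact monotonicity of $b_k=a_k+\sum_{j\ge k}\varepsilon_j$ is the standard argument for quasi-Fej\'er monotone sequences, and every step (finiteness of the tails, the telescoping identity, boundedness below, and passage to the limit) is justified. The paper states this lemma without proof, so there is nothing to compare against; your argument is the canonical one and would serve as a complete proof.
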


\begin{theorem}
\label{thm:main}
Suppose that Assumption \ref{ass:basic} holds and the sequence $\{(x^k,w^k)\}$ is generated by Algorithm \ref{alg:alm}. Then,
\begin{itemize}
\item[\bf(a)]
for any solution $(x^*,w^*)\in\dX\times\dW$ of the KKT system \eqref{kkt1} and $k\ge 1$, we have that
\[
\label{result}
\begin{array}{l}
\ds
\left\|(x_e^{k+1};w_e^{k+1})\right\|^2_{\Omega}
-\left \|(x_e^k; w_e^k)\right\|^2_{\Omega}
\\[3mm]\ds
\le
-\left(\frac{(2-\tau)}{3\tau}\| x^{k+1}-x^k \|^2
+\|w^{k+1}-w^k\|^2_{\Xi}
-2\tau\sigma\langle d^k, w^{k+1}_e\rangle
\right),
\end{array}
\]
where $x_e:=x-x^*$, $\forall x\in\dX$ and $w_e:=w-w^*$, $\forall w\in\dW$;

\item[\bf (b)] the sequence $\{(x^k,w^k)\}$ is bounded;
\item[\bf (c)] any accumulation point of the sequence $\{(x^k,w^k)\}$ solves the KKT system \eqref{kkt1};
\item[\bf (d)] the whole sequence $\{(x^k,w^k)\}$ converges to a solution to the KKT system \eqref{kkt1}.
\end{itemize}

\end{theorem}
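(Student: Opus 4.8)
The plan is to distill everything into the single ``energy'' estimate \eqref{result} of part (a), and then to read off (b)--(d) from it by a quasi-Fej\'er argument together with the KKT characterization \eqref{dknew} already recorded in the preceding proposition.

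\emph{Part (a).} I would begin from the descent inequality \eqref{ineq:mainb} and multiply it by the positive scalar $2\tau\sigma$, so that its left-hand side becomes $\big(\|x_e^{k+1}\|^2+\tau\sigma\|w_e^{k+1}\|^2_{\what\Sigma_h+\cS}\big)$ minus the same quantity at index $k$. The operators in \eqref{defXT} are engineered exactly for the next move: since $\Theta=\tau\sigma(\what\Sigma_h+\cS)+\tfrac{(2-\tau)\tau\sigma^2}{3}\cA\cA^*$, I can pass to the $\Omega$-metric of \eqref{omega} by adding and subtracting $\tfrac{(2-\tau)\tau\sigma^2}{3}\|\cA^*w_e\|^2$ at both indices. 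I would then convert the constraint-residual terms using $\cA^*w_e^{k+1}=\tfrac{1}{\tau\sigma}(x^{k+1}-x^k)$ from \eqref{eq:wkxk}, which produces the $\tfrac{(2-\tau)}{3\tau}\|x^{k+1}-x^k\|^2$ term of \eqref{result}. \emph{This is the step I expect to be the main obstacle:} after these substitutions one is left with a combination of $\|\cA^*w_e^{k+1}\|^2$ and $\|\cA^*w_e^{k}\|^2$ that does not telescope. Writing $a:=\cA^*w_e^{k+1}$ and $b:=\cA^*w_e^{k}$, so that $\cA^*(w^{k+1}-w^k)=a-b$, the weights $\tfrac13$ and $\tfrac16$ attached to $\cA\cA^*$ in $\Theta$ and $\Xi$ are tuned precisely so that this leftover is dominated, via the elementary identity $-\|a\|^2-\|b\|^2-2\langle a,b\rangle=-\|a+b\|^2\le0$, by the $\cA\cA^*$-part of the $\Xi$-metric on $w^{k+1}-w^k$; matching these constants so that only manifestly nonpositive quantities survive is the delicate bookkeeping at the heart of the estimate.

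\emph{Parts (b) and (c).} With \eqref{result} in hand, set $a_k:=\|(x_e^k;w_e^k)\|_\Omega$. By \eqref{eq-MM} both $\Xi\succ0$ and $\Theta\succ0$, so the first two terms on the right of \eqref{result} are nonpositive and may be discarded, leaving $a_{k+1}^2\le a_k^2+2\tau\sigma\langle d^k,w_e^{k+1}\rangle$. Using $\|d^k\|\le\varepsilon_k$, Cauchy--Schwarz, and the bound $\|w_e^{k+1}\|\le\beta\,a_{k+1}$ furnished by $\Theta\succ0$, a completion of squares yields $a_{k+1}\le a_k+\tilde\varepsilon_k$ with $\{\tilde\varepsilon_k\}$ summable; Lemma \ref{lemma:sq-sum} then gives convergence, hence boundedness, of $\{a_k\}$, and boundedness of $\{(x^k,w^k)\}$ follows since $\Omega$ is injective. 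For (c), I would sum \eqref{result} over $k$: convergence of $\{a_k\}$ bounds the telescoped left-hand side while boundedness of $\{w_e^{k+1}\}$ and summability of $\{\varepsilon_k\}$ bound the inexactness term, giving $\sum_k\big(\|x^{k+1}-x^k\|^2+\|w^{k+1}-w^k\|^2_\Xi\big)<\infty$. Thus $\|x^{k+1}-x^k\|\to0$ and, since $\Xi\succ0$, $\|w^{k+1}-w^k\|\to0$; moreover $\cA^*w^{k+1}-c=\tfrac{1}{\tau\sigma}(x^{k+1}-x^k)\to0$ gives asymptotic feasibility. Passing to a subsequence $(x^{k_j},w^{k_j})\to(x^\infty,w^\infty)$ and letting $j\to\infty$ in \eqref{dknew}, the terms $d^{k_j}$, $(\what\Sigma_h+\cS)(w^{k_j+1}-w^{k_j})$ and $\sigma\cA(\cA^*w^{k_j+1}-c)$ vanish, $\nabla h$ is continuous, and the outer semicontinuity of $\partial\varphi$ closes the graph, so $0\in\partial\varphi(w^\infty)+\nabla h(w^\infty)+\cA x^\infty$ with $\cA^*w^\infty=c$, i.e.\ $(x^\infty,w^\infty)$ solves \eqref{kkt1}.

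\emph{Part (d).} Finally I would invoke the quasi-Fej\'er conclusion of (b) with the particular KKT solution $(x^*,w^*)=(x^\infty,w^\infty)$ produced in (c): the sequence $\|(x^k-x^\infty;w^k-w^\infty)\|_\Omega$ then converges, and because it admits a subsequence tending to $0$ its limit must be $0$. Hence the whole sequence converges to $(x^\infty,w^\infty)$ in the $\Omega$-metric, and by the injectivity of $\Omega$ also in norm.
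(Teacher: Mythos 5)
Your proposal is correct and follows the paper's proof essentially step for step. Part (a) is exactly the paper's computation: multiply \eqref{ineq:mainb} by $2\tau\sigma$, split the $\cA\cA^*$ weight of $\Theta$ off from $\tau\sigma({\what\Sigma_h}+\cS)$, convert one third of the coefficient of $\|\cA^*w_e^{k+1}\|^2$ into $\frac{2-\tau}{3\tau}\|x^{k+1}-x^k\|^2$ via \eqref{eq:wkxk}, and absorb the non-telescoping remainder through $-(\|a\|^2+\|b\|^2)\le-\frac{1}{2}\|a-b\|^2$ (your identity, rescaled) into the $\frac{(2-\tau)\sigma}{6}\cA\cA^*$ part of $\Xi$; parts (c) and (d) coincide with the paper's summation and quasi-Fej\'er limit arguments, including the appeal to the outer semicontinuity of $\partial\varphi$. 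The only deviation is in (b): the paper obtains the recursion $a_{k+1}\le a_k+c\,\varepsilon_k$ by passing through the exact subproblem solutions $\overline w^{k+1}$, the contraction $\|(\overline x_e^{k+1};\overline w_e^{k+1})\|_\Omega\le\|(x_e^k;w_e^k)\|_\Omega$, and the prox-Lipschitz bound $\|w^{k+1}-\overline w^{k+1}\|\le\|\cM^{-1}\|\varepsilon_k$ from \eqref{estd}, whereas you complete the square directly in $a_{k+1}^2\le a_k^2+2\tau\sigma\varepsilon_k\|\Theta^{-1/2}\|\,a_{k+1}$; both routes produce a summable perturbation and feed into Lemma \ref{lemma:sq-sum} identically, so yours is a valid and slightly shorter alternative for this particular theorem (the auxiliary iterates $\overline w^{k+1}$ are, however, still needed elsewhere in the paper's rate and complexity analysis).
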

\begin{proof}
{\bf(a)}
By using \eqref{defXT}, together with the definitions of $\Xi$ and $\Theta$ in \eqref{ineq:mainb},
and the fact that $\cA^*w_e^{k+1}=\frac{1}{\tau\sigma}(x^{k+1}-x^k)$, one can get
$$
\begin{array}{l}
\left\|(x_e^{k+1};w_e^{k+1})\right\|^2_{\Omega}
-\left \|(x_e^k; w_e^k)\right\|^2_{\Omega}
=
\left(\|x_e^{k+1}\|^2
+\|w_e^{k+1}\|^2_{\Theta}
\right)
-\left(\|x_e^k\|^2
+\|w_e^k\|^2_{\Theta}
\right)
\\[3mm]
\le
-\tau\sigma
\left(\frac{2(2-\tau)\sigma}{3}\| \cA ^*w^{k+1}_e \|^2
+\frac{(2-\tau)\sigma}{3}\| \cA ^*w^{k}_e \|^2
+\|w^{k+1}-w^k\|^2_{\frac{1}{2}{\what\Sigma_h}+\cS}
-2\langle d^k, w^{k+1}_e\rangle
\right)
\\[3mm]
\le
-\Big(\frac{(2-\tau)}{3\tau}\| x^{k+1}-x^k \|^2
+\tau\sigma\frac{(2-\tau)}{3}
\big(\sigma\|\cA^*w^{k+1}_e \|^2
+\sigma\|\cA^*w^{k}_e \|^2\big)
\\[2mm]
\qquad\quad
+\tau\sigma\|w^{k+1}-w^k\|^2_{\frac{1}{2}{\what\Sigma_h}+\cS}-2\tau\sigma\langle d^k, w^{k+1}_e\rangle
\Big)
\\[3mm]
\le
-\Big(\frac{(2-\tau)}{3\tau}\| x^{k+1}-x^k \|^2
+\sigma\tau\| w^{k}- w^{k+1}\|_{\frac{(2-\tau)\sigma}{6}\cA\cA^*+\frac{1}{2}{\what\Sigma_h}+\cS}^2-2\tau\sigma\langle d^k, w^{k+1}_e\rangle
\Big),
\end{array}
$$
which, together with the definition of the linear operator $\Xi$ in \eqref{defXT}, implies \eqref{result}.

\smallskip
\noindent{\bf(b)}
Define $\overline x^{k+1}:=x^k+\tau\sigma(\cA^*\overline w^{k+1}-c)$, $\forall k\ge 0$.
From \eqref{wkp1}, \eqref{dk} and \eqref{result} one can get that for any $k\ge 0$,
$$
\left\|(\overline x_e^{k+1};\overline w_e^{k+1})\right\|^2_{\Omega}
\le\left \|(x_e^k; w_e^k)\right\|^2_{\Omega}
-\left(
\frac{(2-\tau)}{3\tau}\| \overline x^{k+1}-x^k \|^2
+\|\overline w^{k+1}-w^k\|^2_{\Xi}
\right).
$$
Meanwhile, one can get that
$\|(\overline x_e^{k+1};\overline w_e^{k+1}) \|_{\Omega}
\le \|(x_e^k; w_e^k) \|_{\Omega},\,\forall\,k\ge 0$.
Therefore, it holds that
\[
\label{ineqpf1}
\begin{array}{ll}
\big\|( x_e^{k+1}; w_e^{k+1}) \big\|_{\Omega}
\le
\big\|(\overline x_e^{k+1};\overline w_e^{k+1})\big\|_{\Omega}
+\big\|\big(x^{k+1}-\overline x^{k+1};
w^{k+1}-\overline w^{k+1}\big)\big\|_\Omega
\\[3mm]
\le\big\|(x_e^k; w_e^k)\big\|_{\Omega}
+\big\|\big(
\tau\sigma\cA^*(w^{k+1}-\overline w^{k+1});
\Theta^{1/2}( w^{k+1}-\overline w^{k+1})\big)\big\|
\\[3mm]
=\big\|(x_e^k; w_e^k)\big\|_{\Omega}
+\big\|w^{k+1}-\overline w^{k+1}\big\|_{\tau^2\sigma^2\cA\cA^*+\Theta} \,,\quad\forall k\ge 0.
\end{array}
\]
From \eqref{estd} we know that
$\| w^{k+1}-\overline w^{k+1}\|_{\cM}^2
\le
\langle \cM^{-1/2}d^k, \cM^{1/2}(w^{k+1}-\overline w^{k+1})\rangle$,
so that
$$
\|\cM^{1/2}(w^{k+1}-\overline w^{k+1})\|
\le
\|\cM^{-1/2}d^k\|
\le \|\cM^{-1/2}\|\|d^k\|.
$$
Therefore, it holds that
 \[
\label{ineq:Hx}
\begin{array}{ll}
\|w^{k+1}-\overline w^{k+1}\|
&\le
\|\cM^{-1/2}\|\|\cM^{1/2}(w^{k+1}-\overline w^{k+1})\|
\\[3mm]
&\le \|\cM^{-1/2}\|\|\cM^{-1/2}\|\|d^k\|
\le
\|\cM^{-1}\|\varepsilon_k,\quad\forall k\ge 0.
\end{array}
\]
Therefore, by combining \eqref{ineqpf1} and \eqref{ineq:Hx} together we can get
$$
\big\|( x_e^{k+1}; w_e^{k+1}) \big\|_{\Omega}
\le
\big\|(x_e^{k}; w_e^{k}) \big\|_{\Omega}
+\sqrt{\|\tau^2\sigma^2\cA\cA^*+\Theta\|}\,\|\cM^{-1}\|\varepsilon_k,\quad\forall k\ge 0.
$$
Hence, the sequence $\left\{\|(x_e^{k+1}; w^{k+1}_e)\|_\Omega\right\}$ is quasi-Fej\'er monotone, which converges to a unique limit point by Lemma \ref{lemma:sq-sum}. Since $\Omega$ defined in \eqref{omega} is positive definite, we further know that the sequence $\{(x^k,w^k)\}$ is bounded.

\smallskip
\noindent{\bf(c)}
From \eqref{result} we know that for any $k\ge 0$,
\[
\label{eq:xiwi}
\begin{array}{l}
\ds
\sum_{i=0}^k\left\{
\left \|(x_e^i; w_e^i)\right\|^2_{\Omega}
-\left\|(x_e^{i+1};w_e^{i+1})\right\|^2_{\Omega}
+2\tau\sigma\varepsilon_i\|w^{i+1}_e\|
\right\}
\\[3mm]\ds
\ge
\sum_{i=0}^k\left\{
\frac{(2-\tau)}{3\tau}\| x^{i+1}-x^i \|^2
+\|w^{i+1}-w^i\|^2_{\Xi}
\right\}.
\end{array}
\]
Since $\{(x^k,w^k)\}$ is bounded and $\{\varepsilon_k\}$ is summable, it holds that
$$
\sum_{i=0}^\infty \left\{
\left \|(x_e^i; w_e^i)\right\|^2_{\Omega}
-\left\|(x_e^{i+1};w_e^{i+1})\right\|^2_{\Omega}
+2\tau\sigma\varepsilon_i\|w^{i+1}_e\|
\right\} < \infty,
$$
which, together with \eqref{eq:xiwi} and the fact that $\Xi \succ 0$, implies
\[
\label{limitzerox}
\lim_{k\to\infty}(x^{k+1}-x^k)=0
\quad\mbox{and}\quad
\lim_{k\to\infty}(w^{k+1}-w^k)=0.
\]
Suppose that the subsequence $\{(x^{k_j},w^{k_j})\}$ of $\{(x^k,w^k)\}$ converges to some limit point $(x^\infty,w^{\infty})$.
By taking limits on both sides of
\eqref{dknew} and \eqref{eq:wkxk} along with $k^j$
and using \eqref{limitzerox} and \cite[Theorem 24.6]{rocbook}, one can get
$$
0\in\partial \varphi(w^{\infty})+ \nabla h(w^\infty)+\cA x^\infty \quad \mbox{and} \quad \cA^*w^{\infty}-c=0,
$$
which implies that $(x^{\infty},w^{\infty})$ is a solution to the KKT system \eqref{kkt1}.

\smallskip
\noindent{\bf(d)} Note that \eqref{result} holds for any $(x^*,w^*)$ satisfying the KKT system \eqref{kkt1}. Therefore, we can choose $x^* = x^\infty$ and $w^* = w^\infty$ in \eqref{result}:
$$
\begin{array}{l}
\|( x^{k+1}-x^{\infty};w^{k+1}-w^{\infty})\|^2_{\Omega}
\le\|(x^k-x^{\infty};w^k-w^\infty)\|^2_{\Omega}
+
2\tau\sigma\|w^{k+1}-w^\infty\|\varepsilon_k.
\end{array}
$$
Note that $\{w^k\}$ is bounded. Then, the above inequality, together with Lemma \ref{lemma:sq-sum}, implies that the quasi-Fej\'er monotone sequence $\{ \|(x^k-x^{\infty};w^k-w^\infty)\|^2_{\Omega}\}$ converges. Since $(x^{\infty},w^{\infty})$ is a limit point of $\{(x^k,w^k)\}$, one has that
$$
\lim_{k\to 0} \|(x^k-x^{\infty};w^k-w^\infty)\|^2_{\Omega} =0,$$
 which, together with the fact that $\Omega \succ 0$, implies that the whole sequence $\{(x^k,w^k)\}$ converges to $(x^{\infty},w^{\infty})$. This completes the proof of the theorem.
\qed
\end{proof}

\subsection{Local Convergence Rate}
In this section, we present the local convergence rate analysis of Algorithm \ref{alg:alm}.
For this purpose, we denote $\dU:=\dX\times\dW$ and consider the KKT residual mapping of problem \eqref{prob1} defined by
\[
\label{residual}
\cR(u)=\cR(x,w):=\begin{pmatrix}
c-\cA^* w
\\[2mm]
w-\prox_{\varphi}(w-\nabla h(w)-\cA x)
\end{pmatrix},
\quad
\forall u=(x,w)\in\dX \times\dW.
\]
Note that $\cR(u)=0$ if and only if $u=(x,w)$ is a solution to the KKT system \eqref{kkt1}, whose solution set can therefore be characterized by $\bK:=\{u\,|\, \cR(u)=0\}$.
Moreover, the residual mapping $\cR$ has the following property.
\begin{lemma}
\label{lem;convineq}
Suppose that Assumption \ref{ass:basic} holds and the sequence $\{u^k:=(x^k,w^k)\}$ is generated by Algorithm \ref{alg:alm}.
Then, for any $k\ge 0$,
\[
\label{ineqres}
\begin{array}{ll}
\ds
\|\cR(u^{k+1})\|^2
\le&
\ds
\frac{1}{\tau^2\sigma^2}\|x^{k}-x^{k+1}\|^2
+\frac{2\|{\what\Sigma_h}+\cS\|}{\tau\sigma}
\|w^{k+1}-w^k\|_{\Theta}^2
\\[3mm]
&
\ds+2\|(1-\tau^{-1})\cA(x^{k+1}-x^k)+d^k\|^2.
\end{array}
\]
\end{lemma}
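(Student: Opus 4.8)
The plan is to bound the two block-components of the residual $\cR(u^{k+1})$ in \eqref{residual} separately, using the optimality condition \eqref{dknew} for $w^{k+1}$ and the multiplier update $x^{k+1}=x^k+\tau\sigma(\cA^*w^{k+1}-c)$, and then to absorb the leftover terms into the operators $\cA$, $\what\Sigma_h+\cS$ and $\Theta$ on the right-hand side of \eqref{ineqres}.

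For the feasibility component this is immediate: from the update rule one gets $c-\cA^*w^{k+1}=\frac{1}{\tau\sigma}(x^k-x^{k+1})$, hence $\|c-\cA^*w^{k+1}\|^2=\frac{1}{\tau^2\sigma^2}\|x^k-x^{k+1}\|^2$, which is the first term of \eqref{ineqres}. For the stationarity component the idea is to represent $w^{k+1}$ itself as a proximal value. By \eqref{dknew}, the vector
$$g^k:=d^k-\nabla h(w^k)-\cA x^k-\sigma\cA(\cA^*w^{k+1}-c)-(\what\Sigma_h+\cS)(w^{k+1}-w^k)$$
lies in $\partial\varphi(w^{k+1})$, so that $w^{k+1}=\prox_\varphi(w^{k+1}+g^k)$. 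Writing the second component of $\cR(u^{k+1})$ as $\prox_\varphi(w^{k+1}+g^k)-\prox_\varphi(w^{k+1}-\nabla h(w^{k+1})-\cA x^{k+1})$ and invoking the nonexpansiveness of $\prox_\varphi$, its norm is at most $\|g^k+\nabla h(w^{k+1})+\cA x^{k+1}\|$. Substituting $g^k$ and using $\cA(x^{k+1}-x^k)-\sigma\cA(\cA^*w^{k+1}-c)=(1-\tau^{-1})\cA(x^{k+1}-x^k)$ (again by the update rule), this reduces to
$$\big\|\,\underbrace{d^k+(1-\tau^{-1})\cA(x^{k+1}-x^k)}_{=:\,a}+\underbrace{\nabla h(w^{k+1})-\nabla h(w^k)-(\what\Sigma_h+\cS)(w^{k+1}-w^k)}_{=:\,b}\,\big\|.$$
Applying $\|a+b\|^2\le 2\|a\|^2+2\|b\|^2$ produces exactly the third term $2\|(1-\tau^{-1})\cA(x^{k+1}-x^k)+d^k\|^2$ of \eqref{ineqres}, and it remains to bound $2\|b\|^2$ by the middle term.

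The crux is therefore the estimate $\|b\|^2\le\frac{\|\what\Sigma_h+\cS\|}{\tau\sigma}\|w^{k+1}-w^k\|_\Theta^2$. Put $\Delta:=w^{k+1}-w^k$ and use the integral representation $\nabla h(w^{k+1})-\nabla h(w^k)=\cJ\Delta$ with $\cJ:=\int_0^1\nabla^2h(w^k+t\Delta)\,dt$; the majorization \eqref{majorize} forces $0\preceq\cJ\preceq\what\Sigma_h$, since each $w'$ globally minimizes $\what h(\cdot,w')-h(\cdot)$ so that $\what\Sigma_h-\nabla^2 h(\cdot)\succeq 0$. Setting $\cP:=\what\Sigma_h+\cS$ and $\cN:=\cP-\cJ$ we have $b=-\cN\Delta$. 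Assumption \ref{ass:basic} gives $\cP\succeq\frac12\what\Sigma_h\succeq0$; moreover $\cJ\succeq0$ yields $\cN\preceq\cP$, while $\cJ\preceq\what\Sigma_h$ gives $\cN=(\what\Sigma_h-\cJ)+\cS\succeq\cS\succeq-\frac12\what\Sigma_h\succeq-\cP$, so that $-\cP\preceq\cN\preceq\cP$. The decisive operator inequality is that $-\cP\preceq\cN\preceq\cP$ with $\cP\succeq0$ implies $\cN^2\preceq\|\cP\|\,\cP$; this follows by writing $\cN=\cP^{1/2}K\cP^{1/2}$ on $\range(\cP)$ with $\|K\|\le1$ and $K=K^*$, whence $K\cP K\preceq\|\cP\|K^2\preceq\|\cP\|\cI$ and thus $\cN^2=\cP^{1/2}(K\cP K)\cP^{1/2}\preceq\|\cP\|\cP$. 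Consequently
$$\|b\|^2=\langle\Delta,\cN^2\Delta\rangle\le\|\cP\|\,\langle\Delta,\cP\Delta\rangle=\|\what\Sigma_h+\cS\|\,\|\Delta\|_{\what\Sigma_h+\cS}^2\le\frac{\|\what\Sigma_h+\cS\|}{\tau\sigma}\|\Delta\|_\Theta^2,$$
where the last step uses $\frac{(2-\tau)\sigma}{3}\cA\cA^*\succeq0$ (as $\tau<2$) together with the definition of $\Theta$ in \eqref{defXT}. Combining the three contributions gives \eqref{ineqres}.

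The step I expect to be the main obstacle is this bound on $\|b\|^2$: it hinges on recognizing that the majorization pins the gradient increment between $0$ and $\what\Sigma_h$ in the L\"owner order, so that $\cN$ is sandwiched by $\pm\cP$ precisely because $\cS\succeq-\frac12\what\Sigma_h$, and on the operator inequality $\cN^2\preceq\|\cP\|\cP$. A secondary technical point is the integral (Hessian) representation of $\nabla h(w^{k+1})-\nabla h(w^k)$ when $h$ is only $C^1$ with Lipschitz gradient; this is resolved by a standard mollification argument, since convolving $h$ with a smooth kernel preserves the majorization by the same $\what\Sigma_h$ and the estimate passes to the limit.
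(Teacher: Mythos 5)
Your proof is correct and follows the same overall route as the paper's: the feasibility block of $\cR(u^{k+1})$ is handled via the multiplier update, the stationarity block via the prox representation of $w^{k+1}$ obtained from \eqref{dknew} together with the nonexpansiveness of $\prox_\varphi$ and $\|a+b\|^2\le 2\|a\|^2+2\|b\|^2$, and the leftover term is absorbed into $\Theta$ exactly as in \eqref{ineq321}--\eqref{ineq322}. The one place where you genuinely diverge is the estimate of the middle term $b=\nabla h(w^{k+1})-\nabla h(w^k)-(\what\Sigma_h+\cS)(w^{k+1}-w^k)$. The paper invokes Clarke's mean value theorem to write $\nabla h(w^{k+1})-\nabla h(w^k)=\Sigma_k(w^{k+1}-w^k)$ with $0\preceq\Sigma_k\preceq\what\Sigma_h$ and then uses the chain $\|Mv\|^2\le\|M\|\langle v,Mv\rangle\le\|\what\Sigma_h+\cS\|\langle v,(\what\Sigma_h+\cS)v\rangle$ for $M=\what\Sigma_h+\cS-\Sigma_k$; the first inequality in that chain tacitly requires $M\succeq 0$, which does not follow from Assumption \ref{ass:basic} alone (one only gets $M\succeq\cS\succeq-\frac{1}{2}\what\Sigma_h$). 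Your sandwich argument $-(\what\Sigma_h+\cS)\preceq\what\Sigma_h+\cS-\cJ\preceq\what\Sigma_h+\cS$, combined with the operator inequality asserting that a self-adjoint operator squeezed between $\pm(\what\Sigma_h+\cS)$ has square dominated by $\|\what\Sigma_h+\cS\|\,(\what\Sigma_h+\cS)$, reaches the same bound without that positivity, so it is in fact the more robust version of this step. The only cost is the mollification detour needed to justify the integral (Hessian) representation of the gradient increment when $h$ is merely $C^1$ with Lipschitz gradient; the paper sidesteps this by citing Clarke's mean value theorem, which applies directly to Lipschitz gradients and yields the same two-sided bound $0\preceq\Sigma_k\preceq\what\Sigma_h$, and you could simply cite that result while keeping your sandwich argument.
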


\begin{proof}
Note that \eqref{dknew} holds. Then, one can see that
$$
w^{k+1}=\prox_\varphi
\left(w^{k+1} + d^k
-\nabla h(w^k)
-\cA \left(x^k+\frac{x^{k+1}-x^k}{\tau}\right)
-\big(\what\Sigma_h+\cS\big)(w^{k+1}-w^k)\right).
$$
By taking the above equality and $c-\cA^*w^{k+1}=\frac{1}{\tau\sigma}(x^{k}-x^{k+1})$ into the definition of $\cR(u^{k+1})$ in \eqref{residual} and using the Lipschitz continuity of $\prox_\varphi$, one can get
\[
\label{ineq321}
\begin{array}{ll}
\ds
\|\cR(u^{k+1})\|^2
\le&
\ds
\frac{1}{\tau^2\sigma^2}\|x^{k}-x^{k+1}\|^2
+2\|(1-\tau^{-1})\cA(x^{k+1}-x^k)+d^k\|^2
\\[3mm]
&
\ds
+2\|\nabla h(w^{k+1})-\nabla h(w^k)
-\big(\what\Sigma_h+\cS\big)(w^{k+1}-w^k)
\|^2.
\end{array}
\]
By using Clarke's mean value theorem \cite[Proposition 2.6.5]{clarke} we know that for any $k\ge 0$ there exists a linear operator $\Sigma_k:\dW\to\dW$ such that
$\nabla h(w^{k+1})-\nabla h(w^k)=\Sigma_k(w^{k+1}-w^k)$
with $0\preceq\Sigma_k\preceq\what\Sigma_h$,
so that
\[
\label{ineq322}
\begin{array}{ll}
\|
\nabla h(w^{k+1})-\nabla h(w^k)
-\big(\what\Sigma_h+\cS\big)(w^{k+1}-w^k)
\|^2
\\[3mm]
=
\|
\big(\what\Sigma_h+\cS-\Sigma_k\big)(w^{k+1}-w^k)
\|^2
\\[3mm]
\le \|{\what\Sigma_h}+\cS-\Sigma_k\|
\langle w^{k+1}-w^k , \big(\what\Sigma_h+\cS-\Sigma_k\big)(w^{k+1}-w^k)\rangle
\\[3mm]
\le
\|{\what\Sigma_h}+\cS\|
\langle w^{k+1}-w^k , \big(\what\Sigma_h+\cS\big)(w^{k+1}-w^k)\rangle
\\[3mm]
\le \frac{\|{\what\Sigma_h}+\cS\|}{\tau\sigma}
\left\langle w^{k+1}-w^k , \tau\sigma\Big(\what\Sigma_h+\cS+\frac{(2-\tau)\sigma}{3}\cA\cA^*\Big)(w^{k+1}-w^k)\right\rangle,\quad\forall k\ge 0,
\end{array}
\]
where the last inequality comes form the fact that $0<\tau<2$. Then, by using the definition of $\Theta$ in \eqref{defXT}, one can readily see from \eqref{ineq321} and \eqref{ineq322} that
\eqref{ineqres} holds. This completes the proof.
\qed\end{proof}
To analyze the linear convergence rate of Algorithm \ref{alg:alm}, we shall introduce the following error bound condition.
\begin{definition}
The KKT residual mapping $\cR$ defined in \eqref{residual} is said to be metric subregular\footnote{
This is equivalent to say that $\cR^{-1}$ is calm at $0\in\dU$ for $\overline u\in \bK$ with the same modulus $\kappa>0$, see \cite[Theorem 3H.3]{dontchev}.} \cite[\mbox{3.8 [3H]}]{dontchev} (with the modulus $\kappa>0$) at $\overline u\in \bK$ for $0\in\dU$ if there exists a constant $r>0$ such that
\[
\label{msr}
\dist\big(u, \bK\big)\le\kappa\|\cR(u)\|\, ,\quad\forall u\in\{u\in\dU\mid\|u-\overline u\|\le r\}.
\]
\end{definition}

Suppose that Assumption \ref{ass:basic} holds. We know from
\eqref{defXT} and \eqref{eq-MM} that $\Xi\succ 0$.
Hence, one can let $\zeta>0$ be the smallest real number such that $\zeta\Xi\succeq\Theta$.
For notational convenience, we define the following positive constants:
\begin{numcases}{}
\rho:=\max\left\{
\frac{6\sigma^2 (\tau-1)^2\|\cA^*\cA\|+3}
{\tau\sigma^2(2-\tau)}\, ,
\frac{2\zeta\|{\what\Sigma_h}+\cS\|
}{\tau\sigma}
\right\}
\max\Big\{\|\Theta\|,1\Big\},
\label{defrho}
\\
\beta:=\max\left\{{\sqrt{\zeta}},\sqrt{{3\tau }/(2-\tau)}\right\},
\label{betamubeta}
\\
\mu:=\sqrt{\tau\sigma\|{\what\Sigma_h}+\cS
+\frac{2}{3}(1+\tau)\sigma\cA\cA^*\|}\; \|\cM^{-1}\|.
\label{betamumu}
\end{numcases}
To ensure the local linear rate convergence of Algorithm \ref{alg:alm}, we need extra conditions to control the error variable $d^k$ in each iteration. Hence, we make the following assumption.
\begin{assumption}
\label{assrate}
There exists an integer $k_0>0$ and
a sequence of nonnegative real numbers
$\{\eta_k\}$ such that
\[
\label{dkerr}
\begin{array}{ll}
\sup\limits_{k\ge k_0} \{\eta_k\}< 1/\mu
\quad \mbox{and}\quad
\|d^k\|\le \eta_k\|u^k-u^{k+1}\|,
\quad
\forall\, k\ge k_0.
\end{array}
\]
\end{assumption}

Now we are ready to present the local convergence rate of Algorithm \ref{alg:alm}.
\begin{theorem}
\label{ratethm}
Suppose that Assumptions \ref{ass:basic} and \ref{assrate} hold. Let $\{u^k=(x^k,w^k)\}$ be the sequence generated by Algorithm \ref{alg:alm} that converges to $u^*:=(x^*,w^*)\in \bK$.
Suppose that the KKT residual mapping $\cR$ defined in \eqref{residual} is metric subregular at $u^*$ for $0\in\dU$ with the modulus $\kappa>0$, in the sense that
there exists a constant $r>0$ such that \eqref{msr} holds with $\overline u=u^*$.
Then, there exists a threshold $\bar k>0$ such that for for all $k\ge\bar k$,
\[
\label{rateresult}
\dist_\Omega\Big(u^{k+1},\bK\Big)
\le\vartheta^k\,
\dist_\Omega\Big(u^k,\bK\Big)
\quad\mbox{with}
\quad
\vartheta^k:=\frac{1}{1-\mu\eta_k}
\left(\sqrt{\frac{\kappa^2\rho}{1+\kappa^2\rho}}
+\mu\eta_k(1+\beta)\right).
\]
Moreover, if it holds that
\[
\label{etarate}
\sup\limits_{k\geq \bar k}
\{\eta_k\}
<\frac{1}{\mu(2+\beta)}\left(1- \sqrt{\frac{\kappa^2\rho}{1+\kappa^2\rho}}\,\right),
\]
then one has
$\sup_{k\geq \bar k}\{\vartheta^k\}<1$,
and the convergence rate of $\dist_\Omega(u^k,\bK)$ is Q-linear when $k\ge\overline k$.
\end{theorem}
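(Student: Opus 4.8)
The plan is to decouple the analysis into an \emph{exact} inner step, for which the metric subregularity error bound \eqref{msr} delivers a clean linear contraction, and an \emph{inexactness correction} controlled by the constants $\mu$ and $\beta$. Throughout I fix $k$ past a threshold $\bar k$ large enough that $k\ge k_0$ (so \eqref{dkerr} is in force), that $\mu\eta_k<1$, and that the iterates stay inside the radius-$r$ ball around $u^*$ where \eqref{msr} holds; the last is possible since $u^k\to u^*$ by Theorem \ref{thm:main}(d) and the inexactness $\|d^k\|$ is vanishing. Let $v^k\in\bK$ attain $\dist_\Omega(u^k,\bK)$ and let $\overline u^{k+1}:=(\overline x^{k+1},\overline w^{k+1})$, $\overline x^{k+1}:=x^k+\tau\sigma(\cA^*\overline w^{k+1}-c)$, denote the exact subproblem iterate (the output of iPALM with $d^k=0$). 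Evaluating the descent inequality \eqref{result} — equivalently the exact-iterate version already displayed in the proof of Theorem \ref{thm:main}(b) — at the KKT point $(x^*,w^*)=v^k$ gives $\dist_\Omega^2(\overline u^{k+1},\bK)\le\|\overline u^{k+1}-v^k\|_\Omega^2\le\dist_\Omega^2(u^k,\bK)-\overline P^k$, where $\overline P^k:=\frac{2-\tau}{3\tau}\|\overline x^{k+1}-x^k\|^2+\|\overline w^{k+1}-w^k\|_\Xi^2$.

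To turn $\overline P^k$ into a lower bound on the progress I would evaluate Lemma \ref{lem;convineq} at $\overline u^{k+1}$ with $d^k=0$. Dominating the $\Theta$-weighted term via $\zeta\Xi\succeq\Theta$ and estimating $\|\cA(\overline x^{k+1}-x^k)\|^2\le\|\cA^*\cA\|\,\|\overline x^{k+1}-x^k\|^2$, the two fractions in \eqref{defrho} are exactly calibrated so that $\|\cR(\overline u^{k+1})\|^2\le(\rho/\|\Omega\|^2)\,\overline P^k$; here the absence of the factor-of-two loss that splitting $d^k$ would incur is what produces the coefficient $6\sigma^2(\tau-1)^2\|\cA^*\cA\|$ rather than twice that. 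Since $\|\Omega\|^2=\max\{\|\Theta\|,1\}$, applying \eqref{msr} and converting the Euclidean distance to the $\Omega$-distance gives $\dist_\Omega^2(\overline u^{k+1},\bK)\le\|\Omega\|^2\kappa^2\|\cR(\overline u^{k+1})\|^2\le\kappa^2\rho\,\overline P^k$. Eliminating $\overline P^k$ between this and the descent bound yields $(1+1/(\kappa^2\rho))\dist_\Omega^2(\overline u^{k+1},\bK)\le\dist_\Omega^2(u^k,\bK)$, i.e. the clean contraction $\dist_\Omega(\overline u^{k+1},\bK)\le\sqrt{\kappa^2\rho/(1+\kappa^2\rho)}\,\dist_\Omega(u^k,\bK)$, which is the leading term of $\vartheta^k$ in \eqref{rateresult}.

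Next I would reinstate the inexactness. From \eqref{estd} and the computation in \eqref{ineq:Hx} one has $\|w^{k+1}-\overline w^{k+1}\|\le\|\cM^{-1}\|\,\|d^k\|$; propagating this through $x^{k+1}-\overline x^{k+1}=\tau\sigma\cA^*(w^{k+1}-\overline w^{k+1})$ and the $\Theta$-weighted $w$-block shows, with $\mu$ as in \eqref{betamumu}, that $\|u^{k+1}-\overline u^{k+1}\|_\Omega\le\mu\|d^k\|$. The triangle inequality then upgrades the clean contraction to $\dist_\Omega(u^{k+1},\bK)\le\sqrt{\kappa^2\rho/(1+\kappa^2\rho)}\,\dist_\Omega(u^k,\bK)+\mu\|d^k\|$. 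To close the recursion I would use \eqref{dkerr} to write $\|d^k\|\le\eta_k\|u^k-u^{k+1}\|$ and bound the step by the two distances through the triangle inequality and the constant $\beta$ of \eqref{betamubeta}, which is precisely the one for which $\|u^k-\overline u^{k+1}\|_\Omega^2\le\beta^2\overline P^k\le\beta^2\dist_\Omega^2(u^k,\bK)$ (it balances $3\tau/(2-\tau)$ on the $x$-block against $\zeta$ on the $\Theta$-weighted $w$-block of $\overline P^k$). Substituting these estimates and solving for $\dist_\Omega(u^{k+1},\bK)$ — legitimate because $\mu\eta_k<1$, which is what permits moving the resulting $\mu\eta_k\dist_\Omega(u^{k+1},\bK)$ term to the left-hand side — produces exactly the factor $\vartheta^k$ of \eqref{rateresult}. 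Finally, \eqref{etarate} is algebraically equivalent to $\mu\eta_k(2+\beta)<1-\sqrt{\kappa^2\rho/(1+\kappa^2\rho)}$, which rearranges to $\vartheta^k<1$; hence $\sup_{k\ge\bar k}\vartheta^k<1$ and the convergence of $\dist_\Omega(u^k,\bK)$ is Q-linear.

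The step I expect to be the crux is the clean contraction: matching the residual bound of Lemma \ref{lem;convineq} to the descent progress term $\overline P^k$ with exactly the constant $\rho$, so that the subregularity modulus $\kappa$ enters the rate only through the monotone factor $\kappa^2\rho/(1+\kappa^2\rho)<1$, while simultaneously justifying that \eqref{msr}, assumed only near $u^*$, may be invoked at $\overline u^{k+1}$ for all $k\ge\bar k$. A secondary difficulty is the apparent circularity in estimating $\|u^k-u^{k+1}\|$ and $\|u^{k+1}-\overline u^{k+1}\|_\Omega$ against each other; this is resolved only by the condition $\mu\eta_k<1$, which is exactly why it is imposed in Assumption \ref{assrate}.
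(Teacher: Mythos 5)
Your proposal is correct and follows essentially the same route as the paper's proof: the exact-iterate contraction $\dist_\Omega(\overline u^{k+1},\bK)\le\sqrt{\kappa^2\rho/(1+\kappa^2\rho)}\,\dist_\Omega(u^k,\bK)$ obtained by calibrating Lemma \ref{lem;convineq} (with $d^k=0$) against the descent term of \eqref{result}, followed by the $\mu\|d^k\|$ perturbation bound, the $\beta$-bound on the exact step, and absorption of the implicit $\mu\eta_k\dist_\Omega(u^{k+1},\bK)$ term into the left-hand side. The only detail you leave implicit is that the paper routes the final triangle inequalities through $\Pi^\Omega_{\bK}(\overline u^{k+1})$ and the nonexpansiveness of the $\Omega$-projection to get the $(1+\beta)$ factor before invoking $\dist_\Omega(u^{k+1},\bK)\le\|u^{k+1}-\Pi^\Omega_{\bK}(\overline u^{k+1})\|_\Omega$, which is exactly the ``two distances'' closure you describe.
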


\begin{proof}
Denote $u_e:=u-u^*$ for all $u\in\dU$ and define
$
\overline x^{k+1}:=x^k+\tau\sigma(\cA^* \overline w^{k+1}-c)$ and $\overline u^{k+1}:=(\overline x^{k+1},\overline w^{k+1})$, $\forall k\ge 0$.
Since $\{u^k\}$ converges to $u^*$ and $\{d^k\}$ converges to $0$ as $k\to\infty$,
one has from \eqref{estd} that $\{\overline u^k\}$ also converges to $u^*$ as $k\to\infty$.
Therefore, there exists a threshold $\bar k>0$ such that
\[
\label{resi}
\|\overline u_e^{k+1}\|\le r
\quad\mbox{and}\quad
\|u_e^{k+1}\|\le r\, ,\quad\forall k\ge \bar k.
\]
According to Lemma \ref{lem;convineq}, one can let $u^{k+1}=\overline u^{k+1}$ and $d^k=0$ in \eqref{ineqres} and use the fact that $\zeta\Xi\succeq\Theta$ to obtain that
\[
\label{ineqres2}
\begin{array}{l}
\ds
\|\cR(\overline u^{k+1})\|^2
\\[3mm]
\ds
\le
\left(
\frac{2{(\tau-1)^2}\|\cA^*\cA\|}{\tau^2}+\frac{1}{(\tau\sigma)^{2}}\right)
\|\overline x^{k+1}- x^k\|^2
+\frac{2\|{\what\Sigma_h}+\cS\|
}{\tau\sigma}\| \overline w^{k+1}-w^k\|_{\Theta}^2
\\[3mm]
\ds
\le
\max\left\{
\frac{{6\sigma^2 (\tau-1)^2\|\cA^*\cA\|+3}}
{\tau\sigma^2(2-\tau)}
\ ,
\frac{2\zeta\|{\what\Sigma_h}+\cS\|
}{\tau\sigma}
\right\}
\left(
\frac{(2-\tau)}{3\tau}
\|\overline x^{k+1}- x^k\|^2
+\|\overline w^{k+1}-w^k\|_{\Xi}^2
\right).
\end{array}
\]
Moreover, according to the definition of $\Omega$ in \eqref{omega}, one has that
$$
\dist_\Omega^2\big(u,\bK\big)
\le \max\{\|\Theta\|,1\}
\dist^2\big(u,\bK\big),\quad \forall\, u\in\dU.
$$
Then, by using the above inequality, together with \eqref{msr}, \eqref{resi} and \eqref{ineqres2}, we can obtain with the constant $\rho>0$ being defined in \eqref{defrho} that
\[
\label{fie1}
\begin{array}{rl}
\ds
\dist^2_\Omega(\overline u^{k+1},\bK)
&
\ds\le\kappa^2\max\{\|\Theta\|,1\}\|\cR(\overline u^{k+1})\|^2
\\[3mm]
&
\ds
 \le\kappa^2\rho
\left(
\frac{(2-\tau)}{3\tau}
\|\overline x^{k+1}- x^k\|^2
+\|\overline w^{k+1}-w^k\|_{\Xi}^2
\right),
\quad
\forall\,  k\ge \bar k.
\end{array}
\]
It is easy to see from \eqref{result} that, for any $k\ge 0$,
\[
\label{ineqge}
\dist_\Omega^2(\overline u^{k+1},\bK)\le
\dist_\Omega^2(u^k,\bK)
-\left(\frac{(2-\tau)}{3\tau}\|\overline x^{k+1}-x^k \|^2
+\|\overline w^{k+1}-w^k\|^2_{\Xi}
\right).
\]
Therefore, by combining \eqref{fie1} and \eqref{ineqge} together we can get
\[
\label{ineqmm1}
 \dist^2_\Omega(\overline u^{k+1},\bK)
\le\frac{\kappa^2\rho}{1+\kappa^2\rho}
\dist_\Omega^2(u^k,\bK),
\quad\forall\, k\ge \bar k.
\]
From \eqref{ineqge} and the fact that
$\zeta\Xi\succeq\Theta$
we know that
$$
\dist_\Omega^2(u^k,\bK)
\ge\min\left\{\frac{(2-\tau)}{3\tau},\frac{1}{\zeta}\right\} \|u^{
k}-\overline u^{k+1}\|_{\Omega}^{2},
\quad\forall\,k\ge 0.
$$
Therefore, it holds that
\[
\label{tempineq1}
\|u^{k}-\overline u^{k+1}\|_\Omega\le \beta\,\dist_\Omega(u^k,\bK),\quad \forall\, k\ge 0,
\]
where the constant $\beta>0$ is given in \eqref{betamubeta}.
By using the triangle inequality, we have that
\[
\label{trian}
\|u^k-\Pi^\Omega_{\bK}(\overline u^{k+1})\|_\Omega
\le \dist_\Omega(u^k,\bK)
+\|\Pi^\Omega_{\bK}(u^k)-\Pi^\Omega_{\bK}( \overline u^{k+1})\|_\Omega,\quad\forall k\ge 0.
\]
Moreover, from \cite[Proposition 2.3]{lemarechal}
we know that
$$
\|\Pi^\Omega_{\bK}(u^k)-\Pi^\Omega_{\bK}( \overline u^{k+1})\|_\Omega^2\le \langle
\Pi^\Omega_{\bK}(u^k)-\Pi^\Omega_{\bK}( \overline u^{k+1}),
\Omega(u^k- \overline u^{k+1})
\rangle,
\quad\forall\, k\ge 0.
$$
Thus, one has
$$\|\Pi^\Omega_{\bK}(u^k)-\Pi^\Omega_{\bK}( \overline u^{k+1})\|_\Omega\le \| u^k- \overline u^{k+1}\|_\Omega,\quad\forall k\ge 0,$$
which together with \eqref{tempineq1} and \eqref{trian}, implies that
\[
\label{eep2}
\|u^k-\Pi^\Omega_{\bK}(\overline u^{k+1})\|_\Omega
\le(1+\beta)\dist_\Omega(u^k,\bK),
\quad\forall\,k\ge 0.
\]
From the definitions of $\Theta$ in \eqref{defXT} and
$\Omega$ in \eqref{omega} we know that
$$
\begin{array}{l}
\ds
\|u^{k+1}-\overline u^{k+1}\|^2_\Omega \;=\;
(\tau\sigma)^2\|\cA^*(w^{k+1}-\overline w^{k+1})\|^2
+\|w^{k+1}-\overline w^{k+1}\|^2_{\Theta}
\\[3mm]
=
\left\langle w^{k+1}-\overline w^{k+1},
\tau\sigma({\what\Sigma_h}+\cS+\frac{2}{3}(1+\tau)\sigma\cA\cA^*)( w^{k+1}-\overline w^{k+1})
\right\rangle.
\end{array}
$$
Based on the above equality, one can see from \eqref{ineq:Hx} and \eqref{betamumu} that
\[
\label{deltau}
\|u^{k+1}-\overline u^{k+1}\|_\Omega\le
\mu\|d^k\|.
\]
Since Assumption \ref{assrate} holds, by using \eqref{dkerr}, \eqref{deltau} and the triangle inequality one can get that
$$
\begin{array}{ll}
\|u^{k+1}-\Pi^\Omega_\bK(\overline u^{k+1})\|_\Omega
\le \|u^{k+1}-\overline u^{k+1}\|_\Omega
+\dist_\Omega\big(\overline u^{k+1},\bK\big)
\\[3mm]
\le
\mu\|d^k\|
+\dist_\Omega\big(\overline u^{k+1},\bK\big)
\le\mu\eta_k\|u^k-u^{k+1}\|
+\dist_\Omega(\overline u^{k+1},\bK)
\\[3mm]
\le\mu\eta_k\|u^{k+1}-\Pi^\Omega_\bK(\overline u^{k+1})\|_\Omega
+\mu\eta_k\|u^k-\Pi^\Omega_{\bK}(\overline u^{k+1})\|_\Omega
+\dist_\Omega(\overline u^{k+1},\bK),
\quad\,\forall k\ge \bar k.
\end{array}
$$
Then, by using the fact that
$\|u^{k+1}-\Pi^\Omega_\bK(\overline u^{k+1})\|_\Omega\ge\dist_\Omega(u^{k+1},\bK)$
and \eqref{eep2}, we can obtain that when $k\ge0$,
$$
(1-\mu\eta_k)\dist_\Omega(u^{k+1},\bK)
\; \le \; \mu\eta_k(1+\beta)\dist_\Omega(u^k,\bK) +
\dist_\Omega(\overline u^{k+1},\bK),
$$
which, together with \eqref{ineqmm1}, implies \eqref{rateresult}.
Finally, it is easy to see that $\sup_{k\ge\bar k}\{\vartheta^k\}<1$ from \eqref{dkerr} and \eqref{etarate}.
 This completes the proof.
\qed\end{proof}

\begin{remark}
Note that if $\{\eta_k\}\to 0$ as $k\to\infty$, condition \eqref{etarate} holds eventually for $\bar k$ sufficiently large.
\end{remark}

\subsection{Non-Ergodic Iteration Complexity}
With the inequalities established in the previous subsections, one can easily get the following non-ergodic iteration complexity results for Algorithm \ref{alg:alm}.
\begin{theorem}
\label{thmiter}
Suppose that Assumption \ref{ass:basic} holds. Let $\{u^k=(x^k,w^k)\}$ be the sequence generated by Algorithm \ref{alg:alm} that converges to $u^*:=(x^*,w^*)\in \bK$.
Then, the KKT residual mapping $\cR$ defined in \eqref{residual} satisfies
\[
\label{complexity}
\min_{0\le j\le k}\|\cR(u^{j})\|^2\le \varrho/k
\quad
\mbox{and}
\quad
\lim_{k\to\infty}\big(k\cdot\min_{0\le j\le k}\|\cR(u^{j})\|^2\big)=0,
\]
where the constant $\varrho$ is defined by
\[
\label{varrho}
\begin{array}{ll}
\ds\varrho:=
\max
\left\{
\frac{{12\sigma^2 (\tau-1)^2\|\cA^*\cA\|+3}}
{\tau\sigma^2(2-\tau)},
\frac{2\zeta\|{\what\Sigma_h}+\cS\|
}{\tau\sigma}\right\}{\bf e}
\\[4mm]
\end{array}
\]
with ${\bf e} :=\|u^0_e\|^2_\Omega+2\tau\sigma\|\Theta^{-1/2}\|(\sum_{j=0}^\infty\varepsilon_j)
\big(\|u_e^0\|_\Omega+\mu \sum_{j=0}^\infty\varepsilon_j\big)+4
\sum_{j=1}^{\infty}\varepsilon_j^2\ .$

\end{theorem}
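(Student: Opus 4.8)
The plan is to reduce both assertions in \eqref{complexity} to a single summability estimate for the squared KKT residuals and then average. The backbone is the interplay between the one-step residual bound of Lemma \ref{lem;convineq} and the one-step descent inequality \eqref{result} of Theorem \ref{thm:main}(a): the former controls $\|\cR(u^{k+1})\|^2$ by the successive differences, while telescoping the latter shows that the cumulative sum of those successive differences stays uniformly bounded thanks to the summability of $\{\varepsilon_k\}$.

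First I would establish a clean per-iteration residual bound. Starting from \eqref{ineqres}, I would use $\zeta\Xi\succeq\Theta$ to replace $\|w^{k+1}-w^k\|_\Theta^2$ by $\zeta\|w^{k+1}-w^k\|_\Xi^2$, and split the last term via $\|a+b\|^2\le 2\|a\|^2+2\|b\|^2$ together with $\|\cA(x^{k+1}-x^k)\|^2\le\|\cA^*\cA\|\,\|x^{k+1}-x^k\|^2$. Factoring $\tfrac{(2-\tau)}{3\tau}$ out of the coefficient of $\|x^{k+1}-x^k\|^2$, exactly as in the computation leading to \eqref{ineqres2}, this gives
$$\|\cR(u^{k+1})\|^2\le \varrho_0\Big(\tfrac{(2-\tau)}{3\tau}\|x^{k+1}-x^k\|^2+\|w^{k+1}-w^k\|_\Xi^2\Big)+ C\|d^k\|^2,$$
where $\varrho_0$ is the $\max\{\cdot,\cdot\}$ prefactor appearing in \eqref{varrho} and $C>0$ is an absolute constant. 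The doubling $6\mapsto 12$ relative to \eqref{defrho} is precisely the price of separating the error $d^k$ from the $\cA$-term, since that separation inserts an extra factor $2$ into the coefficient of $\sigma^2(\tau-1)^2\|\cA^*\cA\|$ while leaving the $\tfrac{1}{(\tau\sigma)^2}$ term (the source of the $+3$) untouched.

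Next I would telescope \eqref{result}. Summing it over $j=0,\dots,k-1$ and discarding $-\|u_e^k\|_\Omega^2\le 0$ yields
$$\sum_{j=0}^{k-1}\Big(\tfrac{(2-\tau)}{3\tau}\|x^{j+1}-x^j\|^2+\|w^{j+1}-w^j\|_\Xi^2\Big)\le \|u_e^0\|_\Omega^2+2\tau\sigma\sum_{j=0}^{k-1}\|d^j\|\,\|w_e^{j+1}\|.$$
To control the cross term I would first prove the uniform bound $\|u_e^{j+1}\|_\Omega\le\|u_e^0\|_\Omega+\mu\sum_{i\ge0}\varepsilon_i$, obtained by iterating $\|u_e^{j+1}\|_\Omega\le\|\overline u_e^{j+1}\|_\Omega+\|u^{j+1}-\overline u^{j+1}\|_\Omega\le\|u_e^j\|_\Omega+\mu\|d^j\|$ using \eqref{deltau} and the monotonicity $\|\overline u_e^{j+1}\|_\Omega\le\|u_e^j\|_\Omega$ recorded in the proof of Theorem \ref{thm:main}(b); then $\|w_e^{j+1}\|\le\|\Theta^{-1/2}\|\,\|u_e^{j+1}\|_\Omega$. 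Since $\|d^j\|\le\varepsilon_j$ and $\{\varepsilon_k\}$ is summable, the right-hand side above is bounded by the first two terms of ${\bf e}$, and $\sum_j C\|d^j\|^2\le C\sum_j\varepsilon_j^2$ furnishes the last term. Combining with the per-iteration bound collapses everything into $\sum_{j=1}^k\|\cR(u^j)\|^2\le\varrho$, so that $\min_{0\le j\le k}\|\cR(u^j)\|^2\le\min_{1\le j\le k}\|\cR(u^j)\|^2\le\tfrac1k\sum_{j=1}^k\|\cR(u^j)\|^2\le\varrho/k$.

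Finally, the refinement $\lim_{k\to\infty}\big(k\cdot\min_{0\le j\le k}\|\cR(u^j)\|^2\big)=0$ follows from a standard tail argument: since $\sum_{j\ge1}\|\cR(u^j)\|^2\le\varrho<\infty$, the tail $\sum_{j\ge\lceil k/2\rceil}\|\cR(u^j)\|^2\to0$, while $\sum_{j=\lceil k/2\rceil}^{k}\|\cR(u^j)\|^2\ge\tfrac{k}{2}\min_{0\le j\le k}\|\cR(u^j)\|^2$ because every summand is at least the minimum over the larger index range. I expect the main obstacle to lie entirely in the bookkeeping of the telescoping step: producing the uniform bound on $\|w_e^{j+1}\|$ and verifying that the cross term $\langle d^j,w_e^{j+1}\rangle$ — the only place where inexactness enters the summed descent inequality — remains summable, so that the right-hand side condenses exactly into the constant ${\bf e}$ and hence into $\varrho$. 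The residual estimate of the first step and the averaging in the last step are then routine.
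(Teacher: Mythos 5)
Your proposal is correct and follows essentially the same route as the paper's proof: the same per-iteration residual bound derived from Lemma \ref{lem;convineq} (with the $6\mapsto 12$ doubling arising exactly as you describe from splitting off $d^k$), the same telescoping of \eqref{result}, the same uniform bound on $\|w_e^{j+1}\|_\Theta$ via \eqref{deltau} and the quasi-Fej\'er monotonicity $\|\overline u_e^{j+1}\|_\Omega\le\|u_e^j\|_\Omega$ to control the cross term, and then averaging plus the standard tail argument (which the paper leaves implicit). No gaps.
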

\begin{proof}
From \eqref{result} in Theorem \ref{thm:main}(a) we know that$
\|\overline u_e^{j+1}\|_\Omega\le
\|u_e^{j}\|_\Omega,\,\forall\, j\ge 0$.
Moreover, \eqref{deltau} still holds with $\mu$ being given in \eqref{betamumu}, so that
$$\|u^{j+1}-\overline u^{j+1}\|_\Omega\le
\mu\|d^j\|,\quad\forall\, j\ge 0.$$
Therefore,
$$
\|w_e^{j+1}\|_{\Theta}
\le\|u_e^{j+1}\|_\Omega\le
\|u_e^{j}\|_\Omega+\mu\|d^k\|
\le \|u_e^0\|_\Omega+\mu\sum_{j=0}^\infty\varepsilon_j,
\quad\forall\, j\ge 0.
$$
Consequently, for any $k\ge 0$
\[
\label{sumsum}
\begin{array}{ll}
\ds
\sum_{j=0}^k\langle d^j,w_e^{j+1}\rangle
&
\ds
\le
\|\Theta^{-1/2}\|\left(
\sum_{j=0}^k\|d^j\|\right)\|w_e^{j+1}\|_{\Theta}
\\[4mm]
&
\ds\le
\|\Theta^{-1/2}\|\left(\sum_{j=0}^\infty\varepsilon_j\right)
\left(\|u_e^0\|_\Omega+\mu \sum_{j=0}^\infty\varepsilon_j \right).
\end{array}
\]
Also, from \eqref{result} of Theorem \ref{thm:main}(a) we know that for any $k\ge 0$,
\[
\label{sumsum1}
\begin{array}{ll}
\|u^0_e\|^2_\Omega&
\ds\ge
\|u^0_e\|^2_\Omega
-\|u^{k+1}_e\|^2_\Omega
=
\sum_{j=0}^k
\left(\|u^{j}_e\|^2_\Omega
-\|u^{j+1}_e\|^2_\Omega
\right)
\\[3mm]
&\ds
\ge
\sum_{j=0}^k\left( \|w^{j+1}-w^{j}\|_{\Xi}^{2}
+\frac{2-\tau}{3\tau}\|x^{j+1}-x^{j}\|^2\right)
-2\tau\sigma\sum_{j=0}^k\langle d^j, w_e^{j+1} \rangle.
\end{array}
\]
Moreover, from \eqref{ineqres} we know that
$$
\begin{array}{ll}
\ds
\|\cR(u^{k+1})\|^2
\le
\ds
\frac{ {4\sigma^2 (\tau-1)^2\|\cA^*\cA\|+1}}
{\tau^2\sigma^2}
\| x^{k+1}- x^k\|^2
+\frac{2\zeta\|{\what\Sigma_h}+\cS\|
}{\tau\sigma}
\|w^{k+1}-w^k\|_{\Xi}^2
+4\|d^k\|^2.
\end{array}
$$
Therefore, we can get from \eqref{sumsum} and \eqref{sumsum1} that
$\sum_{j=0}^{\infty}
\|\cR(u^{j+1})\|^2\le
\varrho$.
From here, we can easily get required results in
 \eqref{complexity}.
\qed\end{proof}

\section{The Equivalence Property}
\label{sec:main}

In this section, we establish the equivalence of an inexact block sGS decomposition based multi-block indefinite-proximal ADMM for solving problem \eqref{probmulti} to the inexact indefinite-proximal ALM presented in the previous section.
The iteration scheme of the former has already been briefly sketched in \eqref{iadmm}
in the introduction. Here we shall formally present it as Algorithm \ref{alg:admm}.
\makeatletter
\renewcommand
\thealgorithm{{sGS-iPADMM}}
\makeatother
\begin{algorithm}%[H]
\normalsize
\caption{An inexact block sGS decomposition based indefinite-Proximal ADMM}
\label{alg:admm}
Let $\{\tilde\varepsilon_k\}$ be a summable sequence of nonnegative real numbers, $\tau\in(0,2)$ be the (dual) step-length, and $(x^0,y^0,z^0)\in\dX\times\dom p\times\dY_2\times\cdots\times\dY_s\times\dZ$ be the given initial point.
Choose the self-adjoint linear operators $\cD_i:\dY_i\to\dY_i,\, i=1,\,\ldots,s$.
For $k=0,1,\ldots,$ perform the following steps in each iteration.
\begin{description}
\item[\bf Step 1.] For $i=s,\ldots,2$, compute
$$
y_{i}^{k+\frac{1}{2}}
\approx
\argmin_{y_{i}\in\dY_{i}}
\left\{{\cL}_\sigma
\left(\Big(y^{k}_{< i};y_{i}; y^{k+\frac{1}{2}}_{> i}\Big),z^{k};(x^k,y^k)\right)
+\frac{1}{2}\|y_{i}-y_{i}^k\|_{\cD_{i}}^2 \right\},
$$
such that there exists $\tilde\delta^{k}_i$ satisfying $\|\tilde\delta^{k}_i\| \leq \tilde\varepsilon_k$ and
$$
\tilde\delta^{k}_i\in\partial_{y_{i}}{\cL}_\sigma
\left(\Big(y^{k}_{< i}; y_{i}^{k+\frac{1}{2}}; y^{k+\frac{1}{2}}_{> i}\Big),z^{k};(x^k,y^k)\right)
+\cD_{i}\Big(y_{i}^{k+\frac{1}{2}}-y_{i}^{k}\Big).
$$
\end{description}
%%%%%%%%%%%%%%%%%%%%%%%%%%%%%%%%%%%%%%%%%%%%%%%%%%%%%%%%
\begin{description}
\item[\bf Step 2.] For $i=1,\ldots,s$, compute
$$
y_{i}^{k+1}\approx{\argmin_{y_{i}\in\dY_{i}}}
\left\{{\cL}_\sigma
\left(\Big(y^{k+1}_{< i};y_{i}; y^{k+\frac{1}{2}}_{>i}\Big),z^{k};(x^k,y^k)\right)
+\frac{1}{2}\|y_{i}-y_{i}^k\|_{\cD_{i}}^2 \right\},
$$
such that there exists $\delta^k_i$ satisfying $\|\delta^k_i\| \leq \tilde\varepsilon_k$ and
$$
\delta^k_i\in\partial_{y_{i}}{\cL}_\sigma
\left(\Big(y^{k+1}_{<i}; y_{i}^{k+1}; y^{k+\frac{1}{2}}_{> i}\Big),z^{k};(x^k,y^k)\right)
+\cD_{i}\Big(y_{i}^{k+1}-y_{i}^{k}\Big).
$$
\end{description}
%%%%%%%%%%%%%%%%%%%%%%%%%%%%%%%%%%%%%%%%%%%%%%%%%%%%%%%%
\begin{description}
\item[\bf Step 3.]
Compute $z^{k+1}\approx\argmin\limits_{z\in\dZ}
\left\{\cL_{\sigma}\left(y^{k+1},z;(x^k,y^k)\right)\right\}$,
such that $\|\gamma^k\|\le\tilde\varepsilon_k$ with
\[
\label{gammak}
\gamma^k:=\nabla_z\cL_{\sigma}\left(y^{k+1} ,z^{k+1};(x^k,y^k)\right)
=\cG x^k - b + \sig \cG (\cF^* y^{k+1} + \cG^* z^{k+1}-c).
\]
\end{description}
\begin{description}

\item[\bf Step 4.]
Compute $x^{k+1}:=x^k+\tau\sigma\left(\cF^*y^{k+1}+\cG^* z^{k+1}-c\right)$.
\end{description}
\end{algorithm}

Recall that the KKT system of problem \eqref{probmulti} is defined by
\[
\label{kkt}
0\in
\begin{pmatrix}
\partial p(y_1)\\
0
\end{pmatrix}+\nabla f(y)+\cF x
,\quad
\cG x-b=0,\quad
\cF^*y+\cG^*z=c.
\]
We make the following assumption on problem \eqref{probmulti} throughout this section.
\begin{assumption}
\label{ass}
The solution set to the KKT system \eqref{kkt} is nonempty.
\end{assumption}
Note that if Slater's constraint qualification (SCQ) holds for problem \eqref{probmulti}, i.e.,
$$
\big\{(y,z)\ |\ y_1\in\ri\,(\dom p),\ \cF^{*}y+\cG^{*}z=c\big\}\ne \emptyset,
$$
then we know from \cite[Corollaries 28.2.2 \& 28.3.1]{rocbook} that a vector $(y,z)\in\dY\times\dZ$ is a solution to problem \eqref{probmulti} if and only if there exists a Lagrangian multiplier $ x\in\dX$ such that
$(x,y,z)$ is a solution to the KKT system \eqref{kkt}.
Therefore, {Assumption \ref{ass}} holds if the SCQ holds and \eqref{probmulti} has at least one optimal solution. Moreover, for any $(x,y,z)\in\dX\times\dY\times\dZ$ satisfying \eqref{kkt}, we know
from \cite[Corollary 30.5.1]{rocbook}
that $(y, z)$ is an optimal solution to problem \eqref{probmulti} and $x$ is an optimal solution to its dual problem.

Recall that the majorized augmented Lagrangian function of problem \eqref{probmulti} was given in \eqref{lagr}.
Note that one can always write $\cF x= (\cF_1x;\ldots;\cF_s x)$, $\forall x\in\dX$ with each $\cF_i:\dX\to\dY_i$ being a given linear mapping.
For later discussions, we symbolically decompose the self-adjoint linear operator $\what\Sigma^f$ in the following from
\[
\label{sigfdec}
\what\Sigma^f=
\begin{pmatrix}
\ \what\Sigma^f_{11}\ &
\ \what\Sigma^f_{12}\ &
\ \cdots \ &
\ \what\Sigma^f_{1s}\ \\[3mm]
\what\Sigma^f_{21}&\what\Sigma^f_{22}&\cdots &\what\Sigma^f_{2s}\\[3mm]
\vdots&\vdots&\ddots&\vdots \\[3mm]
\what\Sigma^f_{s1}&\what\Sigma^f_{s2}&\cdots &\what\Sigma^f_{ss}
\end{pmatrix}
\quad\mbox{with}
\quad
\what\Sigma^f_{ij}:\dY_j\to\dY_i,\
\forall 1\le i,j\le s.
\]
Based on the above decomposition, we make the following assumption on choosing the proximal terms in Algorithm \ref{alg:admm} .
\begin{assumption}
\label{assp}
The self-adjoint linear operators
$\cD_i:\dY_i\to\dY_i,i=1,\,\ldots,s$ in Algorithm \ref{alg:admm} are chosen such that
\[
\label{indcondition}
\frac{1}{2}\what\Sigma^f_{ii}+\sigma\cF_i\cF_i^*+\cD_i\succ 0
\quad\mbox{and}\quad
\cD:=\diag(\cD_1,\ldots,\cD_s)\succeq-\frac{1}{2}\what\Sigma^f.
\]
\end{assumption}

We are now ready to prove the equivalence of Algorithm 1 and Algorithm \ref{alg:admm}
for solving problem \eqref{probmulti}.
We begin by applying the inexact block sGS decomposition technique in \cite[Theorem 1]{lisgs}
 to express the procedure for computing $y^{k+1}$ in Steps 1 and 2 of Algorithm \ref{alg:admm} in a more compact fashion.
For this purpose we define the following linear operator
\[
\label{defno}
\cN:=\what\Sigma^f+\sigma\cF\cF^*+\cD.
\]
Note that the self-adjoint linear operator $\cN$ is positive semidefinite, if Assumption \ref{assp} holds.
Moreover, as can be seen from \eqref{lagr}, for any given $(x,y',z)\in\dX\times\dY\times\dZ$, the linear operator $\cN$ contains all the quadratic information of
$$
\cL_\sigma\left(y,z;(x,y')\right)+\frac{1}{2}\|y-y'\|_\cD^2
$$
with respect to $y$.
Based on \eqref{sigfdec}, the linear operator $\cN$ can be decomposed as $\cN=\cN_d+\cN_u+\cN_u^*$ with $\cN_d$ and $\cN_u$ being the block-diagonal and the strict block-upper triangular parts of $\cN$, respectively, i.e.,
$$
\cN_d:=\diag(\cN_{11},\ldots,\cN_{ss})
\quad\mbox{with}
\quad \cN_{ii}:=\what\Sigma^f_{ii}+\sigma\cF_i\cF_i^*+\cD_i,\quad
i=1,\ldots,s
$$
and
\[
\label{Hu}
\cN_u:=
\begin{pmatrix}
\ 0\ &\ \cN_{12}\
&\cdots&\ \cN_{1s}
\\[3mm]
\ 0&0&\ddots&\vdots\\[3mm]
\ \vdots&\vdots&\ddots&\cN_{(s-1)s}
\\[3mm]
\ 0&\ 0&\ \cdots&0
\end{pmatrix}
\quad
\mbox{with}
\quad
\cN_{ij}=\what\Sigma^f_{ij}+\sigma\cF_i\cF_j^*,\quad \forall\; 1\leq i < j\leq s.
\]
For convenience, we denote in Algorithm \ref{alg:admm} for each $k\ge 0$,
$\tilde\delta_1^k:=\delta_1^k$,
$\tilde\delta^k:=(\tilde\delta^k_1,\tilde\delta_k^2\ldots,\tilde\delta_s^k)$
and $\delta^k:=(\delta^k_1,\ldots,\delta^k_s)$.
Suppose that Assumption \ref{assp} holds. We can define the sequence $\{\delta_\sgs^k\}\in\dY$ by
\[
\label{delta}
\delta_\sgs^k:=\delta^{k}+\cN_u\cN_d^{-1}(\delta^k-\tilde\delta^k).
\]
Moreover, we can define the linear operator
\[
\label{defhm}
\cN_\sgs:=\cN_u\cN_d^{-1}\cN_u^*.
\]
Based on the above definitions, we have the following result, which is a direct consequence of \cite[Theorem 1]{lisgs}.
\begin{lemma}
\label{prop:y}
Suppose that Assumption \ref{assp} holds.
The iterate $y^{k+1}$ in Step 2 of Algorithm \ref{alg:admm} is the unique solution to the perturbed proximal minimization problem given by
\[
\label{ykp1}
y^{k+1}=\argmin_{y\in\dY}\left\{
\cL_{\sigma}\Big(y,z^k;\big(x^k,y^k\big)\Big)+\frac{1}{2}\|y-y^k\|^2_{\cD+\cN_\sgs}
-\langle \delta_\sgs^k,y\rangle\right\}.
\]
Moreover, it holds that
$\cN+\cN_\sgs= (\cN_d+\cN_u)\cN_d^{-1}(\cN_d+\cN_u^*)\succ 0$.
\end{lemma}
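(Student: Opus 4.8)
The plan is to recognize Steps 1 and 2 of Algorithm \ref{alg:admm} as one backward sweep followed by one forward sweep of inexact block coordinate minimization applied to a single quadratic-plus-separable-nonsmooth objective, and then to invoke the inexact block sGS decomposition theorem \cite[Theorem 1]{lisgs} directly. First I would make the correspondence explicit. Using the majorization \eqref{mfy} together with the definition \eqref{lagr}, as a function of $y$ the objective
$$
\cL_\sigma\big(y,z^k;(x^k,y^k)\big)+\tfrac12\|y-y^k\|_\cD^2
$$
coincides, up to an additive constant, with $p(y_1)+\tfrac12\langle y,\cN y\rangle-\langle r^k,y\rangle$, where $\cN$ is the operator in \eqref{defno}---its three contributions being $\what\Sigma^f$ from the majorization, $\sigma\cF\cF^*$ from the penalty term, and $\cD$ from the proximal term---and $r^k$ gathers the linear terms involving $\nabla f(y^k)$, $x^k$, $z^k$, $c$, and $y^k$. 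The nonsmooth part $p(y_1)$ acts on the first block only, and the splitting $\cN=\cN_d+\cN_u+\cN_u^*$ from \eqref{Hu} is exactly the one used in the sGS theorem; the updates $y_i^{k+\frac12}$ $(i=s,\ldots,2)$ and $y_i^{k+1}$ $(i=1,\ldots,s)$ are then the inexact backward and forward coordinate minimizations, with residuals $\tilde\delta^k$ and $\delta^k$ serving as the sweep errors.

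Next I would check the sole hypothesis of \cite[Theorem 1]{lisgs}, namely $\cN_d\succ0$, which is where both parts of Assumption \ref{assp} enter. Writing
$$
\cN_{ii}=\Big(\tfrac12\what\Sigma^f_{ii}+\sigma\cF_i\cF_i^*+\cD_i\Big)+\tfrac12\what\Sigma^f_{ii},
$$
the bracketed group is positive definite by the first condition in \eqref{indcondition}, while $\tfrac12\what\Sigma^f_{ii}\succeq0$ since $\what\Sigma^f\succeq0$; hence $\cN_{ii}\succ0$ for each $i$ and $\cN_d=\diag(\cN_{11},\ldots,\cN_{ss})\succ0$. With $\cN_d\succ0$, the inexact sGS theorem applies verbatim: it identifies the output $y^{k+1}$ of the backward-forward sweep as the unique minimizer of the same objective augmented by the extra proximal term $\tfrac12\|y-y^k\|^2_{\cN_\sgs}$ and perturbed linearly by the aggregated error $\delta_\sgs^k$ of \eqref{delta}. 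Merging the already-present $\tfrac12\|y-y^k\|_\cD^2$ with $\tfrac12\|y-y^k\|^2_{\cN_\sgs}$ produces the proximal operator $\cD+\cN_\sgs$ and hence \eqref{ykp1}.

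Finally, the ``moreover'' part is a one-line identity that also supplies the strong convexity needed for the claimed uniqueness. Expanding and using $\cN=\cN_d+\cN_u+\cN_u^*$ with \eqref{defhm},
$$
(\cN_d+\cN_u)\cN_d^{-1}(\cN_d+\cN_u^*)=\cN_d+\cN_u^*+\cN_u+\cN_u\cN_d^{-1}\cN_u^*=\cN+\cN_\sgs.
$$
Since $\cN_u^*$ is strict block lower triangular, $M:=\cN_d+\cN_u^*$ is block lower triangular with invertible diagonal blocks $\cN_{ii}$, hence invertible, and its adjoint is $\cN_d+\cN_u$; as $\cN_d^{-1}\succ0$, the expression has the form $M^*\cN_d^{-1}M$ with $M$ invertible, so $\cN+\cN_\sgs\succ0$. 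This positive definiteness makes the quadratic in \eqref{ykp1} strongly convex, guaranteeing a unique solution. The only genuine care required is in the second step---matching the two-sweep error formalism of \cite[Theorem 1]{lisgs} to $\tilde\delta^k,\delta^k$ so that the aggregation $\delta_\sgs^k=\delta^k+\cN_u\cN_d^{-1}(\delta^k-\tilde\delta^k)$ comes out exactly right---while everything else reduces to the verification $\cN_d\succ0$ and the displayed expansion.
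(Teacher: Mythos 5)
Your proposal is correct and follows exactly the route the paper takes: the paper presents this lemma as a direct consequence of the inexact block sGS decomposition theorem \cite[Theorem 1]{lisgs}, with no further proof given. Your additional details — verifying $\cN_d\succ 0$ from the first condition in \eqref{indcondition} plus $\what\Sigma^f\succeq 0$, matching the sweep errors to $\tilde\delta^k,\delta^k$ so that \eqref{delta} is the aggregated residual, and expanding $(\cN_d+\cN_u)\cN_d^{-1}(\cN_d+\cN_u^*)=\cN+\cN_\sgs\succ 0$ via the invertibility of the block-triangular factor — are all accurate and simply flesh out what the citation leaves implicit.
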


\begin{remark}
From \eqref{ykp1} one can get the interpretation of the linear operator $\cN_\sgs$ defined in \eqref{defhm}. That is, by adding the proximal term $\frac{1}{2}\|y-y^k\|^2_\cD$ to the majorized augmented Lagrangian function and conduct one cycle of the block sGS-type block coordinate minimization via Steps 1 and 2 in Algorithm \ref{alg:admm}, the resulted $y^{k+1}$ is then an inexact solution to the following problem
$$
\min_{y\in\dY}\left\{
\cL_{\sigma}\Big(y,z^k;\big(x^k,y^k\big)\Big)+\frac{1}{2}\|y-y^k\|^2_{\cD}+\frac{1}{2}\|y-y^k\|^2_{\cN_\sgs}
\right\},
$$
where the proximal term $\frac{1}{2}\|y-y^k\|^2_{\cN_\sgs}$ is generated due to the sGS-type iteration with the linear operator $\cN_\sgs$ being defined by \eqref{defhm} and \eqref{defno},
while $\delta_{\sgs}^k$ defined in \eqref{delta} represents the error accumulated from $\tilde\delta^k$ and $\delta^k$ after one cycle of the sGS-type update.
\end{remark}

The following elementary result\footnote{This lemma can be directly verified via the singular value decomposition of the linear operator $\cG$ and some basic calculations from linear functional analysis.} will be frequently used later.
\begin{lemma}
\label{psdsub}
The self-adjoint linear operator $\cG\cG^*$ is nonsingular (positive definite) on the subspace $\range(\cG)$ of $\dZ$.
\end{lemma}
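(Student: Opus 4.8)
The plan is to reduce the statement to two elementary facts from finite-dimensional linear algebra: that a self-adjoint positive semidefinite operator is positive definite precisely on its own range, and that $\range(\cG)=\range(\cG\cG^*)$. First I would record that $\cG\cG^*:\dZ\to\dZ$ is self-adjoint and positive semidefinite, since $\langle z,\cG\cG^* z\rangle=\|\cG^* z\|^2\ge 0$ for every $z\in\dZ$. In particular $\cG\cG^* z=\cG(\cG^* z)$ lies in $\range(\cG)$ for all $z$, so $\cG\cG^*$ leaves $\range(\cG)$ invariant and restricts to an operator on this subspace; only then does it make sense to ask whether that restriction is positive definite.

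The cleanest route, as the footnote suggests, is through the singular value decomposition of $\cG$. Writing $\cG x=\sum_{i=1}^{r}\sigma_i\langle v_i,x\rangle\,u_i$ with $r=\mathrm{rank}(\cG)$, orthonormal families $\{v_i\}\subset\dX$ and $\{u_i\}\subset\dZ$, and singular values $\sigma_1\ge\cdots\ge\sigma_r>0$, one obtains $\cG^* z=\sum_{i=1}^{r}\sigma_i\langle u_i,z\rangle\,v_i$ and hence $\cG\cG^* z=\sum_{i=1}^{r}\sigma_i^{2}\langle u_i,z\rangle\,u_i$. Since $\range(\cG)=\mathrm{span}\{u_1,\ldots,u_r\}$, the operator $\cG\cG^*$ acts on this subspace, in the orthonormal basis $\{u_i\}$, as multiplication by the strictly positive numbers $\sigma_i^2$; therefore its restriction to $\range(\cG)$ is nonsingular and positive definite, which is exactly the claim.

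Alternatively, and avoiding even the SVD, I would argue directly: for any nonzero $v\in\range(\cG)$ the quantity $\langle v,\cG\cG^* v\rangle=\|\cG^* v\|^2$ vanishes only if $v\in\ker(\cG^*)$, and since $\ker(\cG^*)=\range(\cG)^{\perp}$ in finite dimensions, this would force $v\in\range(\cG)\cap\range(\cG)^{\perp}=\{0\}$, a contradiction. Hence the quadratic form $v\mapsto\langle v,\cG\cG^* v\rangle$ is strictly positive on $\range(\cG)\setminus\{0\}$, which for a self-adjoint operator preserving that subspace is equivalent to the asserted positive definiteness, and thus to nonsingularity of the restriction.

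There is no genuine obstacle here; the only points requiring a modicum of care are purely bookkeeping ones — making explicit that $\cG\cG^*$ leaves $\range(\cG)$ invariant so that the phrase ``positive definite on $\range(\cG)$'' is well posed, and invoking the orthogonality relation $\ker(\cG^*)=\range(\cG)^{\perp}$ (equivalently $\range(\cG)=\range(\cG\cG^*)$), both of which are standard in finite-dimensional Hilbert spaces.
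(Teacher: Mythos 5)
Your proof is correct and follows exactly the route the paper itself indicates: the paper gives no explicit proof of this lemma, only a footnote stating that it ``can be directly verified via the singular value decomposition of $\cG$,'' and your first argument fills in precisely that sketch. Your second, SVD-free argument via $\ker(\cG^*)=\range(\cG)^{\perp}$ is an equally valid (and slightly more economical) way to reach the same conclusion, and your care in noting that $\cG\cG^*$ leaves $\range(\cG)$ invariant is a worthwhile point the paper leaves implicit.
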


Now, we start to establish the equivalence between Algorithm \ref{alg:admm} and Algorithm \ref{alg:alm}.
The first step is to show that the procedure of obtaining $(y^{k+1},z^{k+1})$ in Algorithm \ref{alg:admm} can be viewed as the procedure of getting $w^{k+1}$ in Algorithm \ref{alg:alm}. For this purpose, we define the block diagonal linear operator $\cT:\dY\times\dZ\to\dY\times\dZ$ by
\[
\label{tyz}
\cT\big(y;z\big):=
\begin{pmatrix}
\left(\cD+\cN_{\sgs}+\sigma\cF\cG^* [\cG\cG^*]^{\dag}\cG\cF^*\right)y \\[3mm] 0
\end{pmatrix},\quad\forall\,\big(y,z\big)\in\dY\times\dZ.
\]
Moreover, we define the sequence $\left\{\Delta^k\right\}$ in $\dY$ by
\[
\label{hdelta}
\Delta^k:=\delta_{\sgs}^{k}-\cF\cG^*
 [\cG\cG^*]^{\dag}
\left(\gamma^{k-1} -\gamma^{k} -\cG (x^{k-1}-x^{k})\right), \quad k\ge 0
\]
with the convention that
\[
\label{xgm1}
\left\{
\begin{array}{l}
x^{-1}:=x^{0}-\tau\sigma(\cF^*y^{0}+\cG^*z^{0}-c),
\\[3mm]
\gamma^{-1}:= -b+\cG x^{-1}+\sigma \cG(\cF^*y^{0}+\cG^*z^{0}-c).
\end{array}
\right.
\]
Based on the above definitions and Lemma \ref{prop:y}, we have the following result.
\begin{proposition}
Suppose that Assumption \ref{assp} holds.
Then,
\begin{itemize}
\item[\bf (a)] Algorithm \ref{alg:admm} is well-defined;
\item[\bf (b)] the sequence $\{(x^k,y^k,z^k)\}$ generated by Algorithm \ref{alg:admm} satisfies
\[
\label{subg}
\left(\Delta^{k}; \gamma^{k}\right)
\in\partial_{(y,z)}\cL_{\sigma}\left(\big(y^{k+1},z^{k+1}\big);\big(x^k,y^k\big)\right)+
\cT\left(
y^{k+1}-y^k; z^{k+1}-z^k\right),
\quad\forall\, k\ge 0.
\]
\end{itemize}
 \end{proposition}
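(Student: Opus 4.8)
First I would dispose of well-definedness. Since $\what\Sigma^f$ is positive semidefinite, each diagonal block $\what\Sigma^f_{ii}$ is positive semidefinite, so the first relation in Assumption \ref{assp} yields $\cN_{ii}=\what\Sigma^f_{ii}+\sigma\cF_i\cF_i^*+\cD_i\succeq\frac12\what\Sigma^f_{ii}+\sigma\cF_i\cF_i^*+\cD_i\succ0$. As $\cN_{ii}$ is exactly the Hessian in $y_i$ of the function minimized in Steps 1 and 2, every such subproblem is strongly convex with a unique minimizer, and the inexact criteria $\|\tilde\delta_i^k\|\le\tilde\varepsilon_k$, $\|\delta_i^k\|\le\tilde\varepsilon_k$ are always attainable (the exact solution gives $\tilde\delta_i^k=\delta_i^k=0$). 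The $z$-subproblem in Step 3 is convex and its criterion $\|\gamma^k\|\le\tilde\varepsilon_k$, with $\gamma^k=\nabla_z\cL_\sigma$, can likewise be met. Hence Algorithm \ref{alg:admm} is well-defined.

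\textbf{Part (b), strategy.} The plan is to collapse the whole $y$-sweep into a single inclusion via Lemma \ref{prop:y} and then reconcile it with the $z$-step. From \eqref{ykp1} the first-order optimality condition for $y^{k+1}$ reads $\delta_\sgs^k\in\partial_y\cL_\sigma(y^{k+1},z^k;(x^k,y^k))+(\cD+\cN_\sgs)(y^{k+1}-y^k)$. The single discrepancy with the target \eqref{subg} is that this is evaluated at $z^k$ rather than at $z^{k+1}$. Because $z$ enters $\partial_y\cL_\sigma$ only through the coupling term $\sigma\cF(\cF^*y+\cG^*z-c)$, replacing $z^k$ by $z^{k+1}$ costs precisely an additive $\sigma\cF\cG^*(z^{k+1}-z^k)$; this is the key bookkeeping move. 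The $z$-component of \eqref{subg} needs no work at all: \eqref{gammak} gives $\gamma^k=\nabla_z\cL_\sigma(y^{k+1},z^{k+1};(x^k,y^k))$ exactly, while the $z$-block of $\cT$ in \eqref{tyz} vanishes.

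\textbf{Part (b), the matching identity.} After transferring the $y$-block operator $\cD+\cN_\sgs+\sigma\cF\cG^*[\cG\cG^*]^{\dag}\cG\cF^*$ of $\cT$ onto the subgradient side, the element that must occupy the first component of \eqref{subg} is $\delta_\sgs^k+\sigma\cF\cG^*(z^{k+1}-z^k)+\sigma\cF\cG^*[\cG\cG^*]^{\dag}\cG\cF^*(y^{k+1}-y^k)$, and I would then verify that this equals $\Delta^k$ from \eqref{hdelta}. Using \eqref{gammak} for $\gamma^k$ and $\gamma^{k-1}$ — with the conventions \eqref{xgm1} supplying the $k=0$ instance so the formula is uniform — a direct subtraction gives $\gamma^{k-1}-\gamma^k-\cG(x^{k-1}-x^k)=-\sigma\cG\cF^*(y^{k+1}-y^k)-\sigma\cG\cG^*(z^{k+1}-z^k)$. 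Substituting this into \eqref{hdelta} and distributing $-\cF\cG^*[\cG\cG^*]^{\dag}$ reduces the claim to the single identity $\cF\cG^*[\cG\cG^*]^{\dag}\cG\cG^*(z^{k+1}-z^k)=\cF\cG^*(z^{k+1}-z^k)$. This holds because $[\cG\cG^*]^{\dag}\cG\cG^*$ is the orthogonal projector onto $\range(\cG)$ while $\cG^*$ annihilates $\range(\cG)^{\perp}=\ker(\cG^*)$, so that $\cG^*[\cG\cG^*]^{\dag}\cG\cG^*=\cG^*$; this is exactly the point at which Lemma \ref{psdsub} (nonsingularity of $\cG\cG^*$ on $\range(\cG)$) enters.

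\textbf{Main obstacle.} I expect the genuine content — and the only delicate part — to be this last reconciliation: tracking the $z^k$-versus-$z^{k+1}$ gap, absorbing it together with the $\sigma\cF\cG^*[\cG\cG^*]^{\dag}\cG\cF^*$ correction coming from $\cT$, and recognizing that the resulting accumulated perturbation is precisely the pseudoinverse expression \eqref{hdelta}. Everything rests on the projection identity $\cG^*[\cG\cG^*]^{\dag}\cG\cG^*=\cG^*$ and on the conventions \eqref{xgm1} that make the $\gamma$-difference formula valid already at $k=0$. Once these are in place, assembling the first and second components yields \eqref{subg} and completes part (b).
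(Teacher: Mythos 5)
Your part (b) is correct and follows essentially the same route as the paper: collapse the $y$-sweep via Lemma \ref{prop:y}, shift the evaluation point of $\partial_y\cL_\sigma$ from $z^k$ to $z^{k+1}$ at the cost of an additive $\sigma\cF\cG^*(z^{k+1}-z^k)$, and reconcile the result with the definition \eqref{hdelta} of $\Delta^k$ through the identity $\cG^*[\cG\cG^*]^{\dag}\cG\cG^*=\cG^*$. The paper runs the bookkeeping in the opposite direction---it first derives $\cG\cG^*(z^k-z^{k+1})=\frac{1}{\sigma}\big(\gamma^{k-1}-\gamma^{k}-\cG(x^{k-1}-x^{k})\big)-\cG\cF^*(y^k-y^{k+1})$ from \eqref{subopt2}, converts this to the expression \eqref{pterm} for $\sigma\cF\cG^*(z^k-z^{k+1})$, and substitutes into the $y$-inclusion \eqref{pf:1}, whereas you expand $\Delta^k$ directly from the $\gamma$-difference formula and match---but the two computations are identical, and the projection identity you isolate is exactly what the paper's footnote to \eqref{pterm} leaves implicit. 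Your remark that the conventions \eqref{xgm1} make the formula uniform at $k=0$ is also the role they play in the paper.

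The gap is in part (a), in Step 3. You assert that the criterion $\|\gamma^k\|\le\tilde\varepsilon_k$ ``can likewise be met'' because the $z$-subproblem is convex, but convexity alone does not deliver this: as $z$ varies, $\nabla_z\cL_\sigma=\cG x^k-b+\sigma\cG(\cF^*y^{k+1}+\cG^*z-c)$ sweeps only the affine set $(\text{constant})+\range(\cG\cG^*)=(\text{constant})+\range(\cG)$, so if $b\notin\range(\cG)$ the constant term retains a fixed nonzero component in $\range(\cG)^{\perp}=\ker(\cG^*)$, $\|\gamma^k\|$ is bounded away from zero, and the criterion becomes unattainable once $\tilde\varepsilon_k$ is small (equivalently, the subproblem is unbounded below and has no minimizer). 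The paper closes precisely this point: it uses $b\in\range(\cG)$ to conclude that $(b-\cG x^k)/\sigma-\cG(\cF^*y^{k+1}-c)\in\range(\cG)$ and then invokes Lemma \ref{psdsub} to show that the normal equation $\cG\cG^*z=(b-\cG x^k)/\sigma-\cG(\cF^*y^{k+1}-c)$, i.e.\ the system \eqref{leaux}, is solvable. You need this argument; your treatment of Steps 1 and 2 via the positive definiteness of $\cN_{ii}$ is fine.
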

\begin{proof}
{\bf(a)} Since Assumption \ref{assp} holds, it is easy to see from Lemma \ref{prop:y} that Steps $1$ and $2$ in algorithm \ref{alg:admm} are well-defined for any $k\ge 0$.
Moreover, from \eqref{gammak} we know that Step $3$ of Algorithm \ref{alg:admm} is well-defined if, for any $k\ge 0$, the following linear system with respect to $z$
\[
\label{leaux}
\cG x^k - b + \sig \cG (\cF^* y^{k+1} + \cG^* z-c)=0
\]
has a solution.
Since $b\in \range(\cG)$, we know that $(b - \cG x^k)/\sigma - \cG(\cF^* y^{k+1} - c) \in \range(\cG)$. Therefore, Lemma \ref{psdsub} implies that the linear system
$$
\cG\cG^* z = (b - \cG x^k)/\sigma - \cG(\cF^* y^{k+1} - c)
$$
or equivalently the linear system \eqref{leaux}, has a solution.
Consequently, Algorithm \ref{alg:admm} is well-defined.

\smallskip
\noindent{\bf (b)} From \eqref{gammak} and \eqref{xgm1} we know that for any $k\ge 0$,
\[
\label{subopt2}
\gamma^{k-1}
= -b+\cG x^{k-1}+\sigma \cG(\cF^*y^{k}+\cG^*z^{k}-c)
\]
so that $\gamma^{k-1}\in\range(\cG)$ and
$\cG\cG^*z^{k}
= (\gamma^{k-1} +b-\cG x^{k-1})/\sigma- \cG\cF^*y^{k}+\cG c$.
Hence,
$$
 \cG\cG^* (z^k-z^{k+1})=
\frac{1}{\sigma}
(\gamma^{k-1}-\gamma^{k} -\cG x^{k-1}+\cG x^{k})- \cG\cF^*(y^k-y^{k+1}), \quad
\forall\, k\ge 0.
$$
Therefore, one can get\footnote{This can be routinely derived by using the singular value decomposition of $\cG$ and the definition of the Moore-–Penrose pseudoinverse.} that for any $k\ge 0$,
\[
\label{pterm}
\begin{array}{l}
\sigma\cF\cG^*(z^{k}-z^{k+1})
\\[3mm]
\ds
=\cF\cG^*
 [\cG\cG^*]^{\dag}
(\gamma^{k-1}-\gamma^{k} -\cG (x^{k-1}-x^{k}))
+\sigma\cF\cG^* [\cG\cG^*]^{\dag}\cG\cF^*(y^{k+1}-y^k).
\end{array}
\]
From \eqref{ykp1} in Lemma \ref{prop:y} we know that, for any $k\ge 0$,
\[
\label{pf:1}
\begin{array}{ll}
\delta_{\sgs}^k&\in\partial_y\cL_\sigma\left(y^{k+1},z^k;(x^k,y^k)\right)
+\big(\cD+\cN_\sgs\big)(y^{k+1}-y^k)
\\[3mm]
&=\partial_y \cL_\sigma\left(y^{k+1},z^{k+1};\big(x^k,y^k\big)\right)+\big(\cD+\cN_\sgs\big)(y^{k+1}-y^k)
+\sigma\cF\cG^*\big(z^k-z^{k+1}\big).
\end{array}
\]
Then, by substituting \eqref{pterm} into \eqref{pf:1} and using the definition of $\Delta^k$ in \eqref{hdelta}, one has that
$$
\Delta^k \in\partial_y \cL_\sigma\left(y^{k+1},z^{k+1};(x^k,y^k)\right)+\left(\cD+\cN_\sgs+\sigma\cF\cG^* [\cG\cG^*]^{\dag}\cG\cF^*\right)(y^{k+1}-y^k),
$$
which, together with \eqref{gammak}, implies that \eqref{subg} holds. This completes the proof.
\qed\end{proof}

The following important result will be used later.

\begin{proposition}
\label{summable}
Suppose that Assumptions \ref{ass} and \ref{assp} hold. Let $\{(x^k,y^k,z^k)\}$ be the sequence generated by Algorithm \ref{alg:admm}.
Define $\xi_0 := \norm{b - \cG x^0}$ and
$$\begin{array}{l}
\xi_k := |1 - \tau|^k \xi_0 + \tau \sum_{i=1}^k |1 - \tau|^{k-i} \tilde{\varepsilon}_{i-1},
\quad\forall\, k\ge1.
\end{array}
$$
Then, it holds that for all $k\ge 0$, $\norm{b - \cG x^k} \le \xi_k$ and
$$\sum_{k=0}^{\infty} \norm{b - \cG x^k} \le \sum_{k=0}^{\infty} \xi_k < +\infty.
$$
\end{proposition}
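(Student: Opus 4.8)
The plan is to extract a one-step recursion for the constraint residual $b-\cG x^k$ that is driven by the inexactness error $\gamma^k$ of Step 3, to solve that recursion explicitly, and then to recognize $\xi_k$ as the exact solution of the corresponding majorizing recurrence. First I would combine the definition of $\gamma^k$ in \eqref{gammak} with the multiplier update in Step 4 of Algorithm \ref{alg:admm}. Applying $\cG$ to $x^{k+1}=x^k+\tau\sigma(\cF^*y^{k+1}+\cG^*z^{k+1}-c)$ and substituting the relation $\sigma\cG(\cF^*y^{k+1}+\cG^*z^{k+1}-c)=\gamma^k-\cG x^k+b$ read off from \eqref{gammak}, one obtains $\cG x^{k+1}=(1-\tau)\cG x^k+\tau b+\tau\gamma^k$, and hence the key identity
$$
b-\cG x^{k+1}=(1-\tau)(b-\cG x^k)-\tau\gamma^k,\qquad\forall\, k\ge 0.
$$
Taking norms and using $\|\gamma^k\|\le\tilde\varepsilon_k$ then yields the scalar one-step estimate $\|b-\cG x^{k+1}\|\le|1-\tau|\,\|b-\cG x^k\|+\tau\tilde\varepsilon_k$.

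Next I would check that the sequence $\{\xi_k\}$ defined in the statement is precisely the solution of the recurrence $\xi_{k+1}=|1-\tau|\,\xi_k+\tau\tilde\varepsilon_k$ with $\xi_0=\|b-\cG x^0\|$; this follows by splitting off the $i=k+1$ term of the defining sum, which contributes exactly $\tau\tilde\varepsilon_k$, while the remaining terms collapse to $|1-\tau|\,\xi_k$. Since $\{\xi_k\}$ and $\{\|b-\cG x^k\|\}$ satisfy the same (monotone in the previous iterate) recursion and agree at $k=0$, a straightforward induction gives $\|b-\cG x^k\|\le\xi_k$ for every $k\ge 0$.

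Finally, for summability I would use that $\tau\in(0,2)$ forces $q:=|1-\tau|\in[0,1)$. Summing the closed form of $\xi_k$, the first contribution is the geometric series $\sum_{k\ge 0}q^k\xi_0=\xi_0/(1-q)$, while the second is the discrete convolution of $\{q^k\}$ with the summable sequence $\{\tilde\varepsilon_k\}$. Interchanging the order of summation, which is legitimate since all terms are nonnegative, gives $\tau\sum_{i\ge 1}\tilde\varepsilon_{i-1}\sum_{k\ge i}q^{k-i}=\tfrac{\tau}{1-q}\sum_{j\ge 0}\tilde\varepsilon_j$, finite because $\{\tilde\varepsilon_k\}$ is summable. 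Adding the two pieces bounds $\sum_{k\ge 0}\xi_k$, and therefore $\sum_{k\ge 0}\|b-\cG x^k\|$, by $\big(\xi_0+\tau\sum_{j\ge 0}\tilde\varepsilon_j\big)/(1-q)<+\infty$. The argument is self-contained once the residual identity is in hand; the only real content is the derivation of $b-\cG x^{k+1}=(1-\tau)(b-\cG x^k)-\tau\gamma^k$, which exhibits $\tau$ as the driver of a geometric contraction of the residual with ratio $|1-\tau|<1$. I do not anticipate a genuine obstacle, the mild care needed being only in justifying the interchange of the double summation, which is immediate from nonnegativity.
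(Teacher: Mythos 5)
Your proposal is correct and follows essentially the same route as the paper: both derive the identity $b-\cG x^{k+1}=(1-\tau)(b-\cG x^k)-\tau\gamma^k$ from Step 4 and the definition of $\gamma^k$ in \eqref{gammak}, take norms to get the one-step contraction with ratio $|1-\tau|<1$, and then sum the resulting geometric convolution against the summable sequence $\{\tilde\varepsilon_k\}$. The only cosmetic difference is that you verify $\xi_k$ solves the recurrence and induct, whereas the paper unrolls the recursion directly; the content is identical.
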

\begin{proof}
We know from Step 4 of Algorithm \ref{alg:admm} and \eqref{xgm1} that
$$
x^{k}= x^{k-1}+\tau\sigma\big(\cF^*y^{k}+\cG^*z^{k}-c\big),
\quad\forall\,
k\ge 0.
$$
Hence, one has that
$$
b-\cG x^{k}
=b-\cG x^{k-1}-\tau\sigma \cG\big(\cF^*y^{k}+\cG^*z^{k}-c\big), \quad \forall\, k\ge 0.
$$
Moreover, from \eqref{xgm1} and \eqref{subopt2} we know that
$$
\tau(\gamma^{k-1}+b-\cG x^{k-1})=
\tau\sigma \cG\big(\cF^*y^{k}+\cG^*z^{k}-c\big), \quad \forall k\ge 0.
$$
Thus, by combining the above two equalities together, one can get
$$
b-\cG x^{k}
=b-\cG x^{k-1}-\tau\big(\gamma^{k-1}+b-\cG x^{k-1}\big)
=(1-\tau)\big(b-\cG x^{k-1}\big)-\tau \gamma^{k-1},
 \quad \forall\, k\ge 0.
$$
Consequently, it holds that
$$
\|b-\cG x^{k}\|
\le
|1-\tau|\, \|b-\cG x^{k-1}\|+\tau\|\gamma^{k-1}\|,
\quad\forall\, k\ge 0,
$$
and hence
\[
\label{bmgs}
\|b-\cG x^{k}\|\le|1-\tau|^{k}\,\|b-\cG x^{0}\|+\tau\sum_{i=1}^{k}|1-\tau|^{k-i}\,\|\gamma^{i-1}\|
\le\xi_k,
\quad\forall k\ge 0.
\]
Note that $\tau\in(0,2)$. It is easy to see that
$$
\begin{array}{ll}
\ds
\sum_{k=0}^{\infty}\|b-\cG x^{k}\|
&\ds
\le \sum_{k=0}^{\infty} \xi_k \le
\left(\sum_{k=0}^{\infty}|1-\tau|^{k}\right)\xi_0+\tau\sum_{k=1}^{\infty}\sum_{i=1}^{k}|1-\tau|^{k-i}\tilde\varepsilon_{i-1}
\\[4mm]
&\ds
\le
\left(\sum_{k=0}^{\infty}|1-\tau|^{k}\right)\xi_0
+\tau\left(\sum_{ {k=0}}^{\infty}|1-\tau|^{k}\right)
\left(\sum_{i=0}^{\infty}
\tilde\varepsilon_{i}
\right)
<+\infty,
\end{array}
$$
which completes the proof.
\qed\end{proof}

Now, we start to show that the sequence $\{(x^k,y^k,z^k)\}$ generated by Algorithm \ref{alg:admm} can be viewed as a sequence generated by Algorithm \ref{alg:alm} from the same initial point. For this purpose, we define the space $\dV:= \dY\times\range(\cG)$, and we define the linear operators $\cB:\dX\to\dV$ and $\cP:\dV\to\dV$ by
\[
\label{hpo}
\cB x:=\big(\cF x;\cG x\big),\ \forall\, x\in\dX
\quad\mbox{ and }\quad
\cP(y,z):=\left(\,\what\Sigma^f y\,;\, 0\right),\ \forall\, (y,z)\in\dV.
\]
Moreover, we define the closed proper convex function $\phi:\dV\to(-\infty,+\infty]$ by
$$
\phi(v)
=\phi(y,z):=p(y_1)+f(y)-\langle b,z\rangle,
\quad
\forall\, v=(y,z)\in\dV
$$
and define
\[
\label{newcl}
\cL_\sigma\left(v;(x,v')\right)
:=\cL_\sigma\left(y,z;(x,y')\right),
\quad\forall\,
v=(y,z)\in\dV,\ v'=(y',z')\in\dV.
\]
Based on the above definitions, problem \eqref{probmulti} can be viewed as an instance of problem \eqref{prob1}.
In this case, the following result is for the purpose of viewing Algorithm \ref{alg:admm} as an instance of Algorithm \ref{alg:alm}.

\begin{theorem}
\label{theomain}
Suppose that Assumptions \ref{ass} and \ref{assp} hold.
Let $\{(x^k,y^k,z^k)\}$ be the sequence generated by Algorithm \ref{alg:admm}.
Define
\[
\label{defvk}
v^k:=\left(y^k;\Pi_{\range(\cG)}(z^k)\right),\quad\forall\, k\ge 0.
\]
Then, for any $k\ge 0$, it holds that
\begin{itemize}
\item[\bf(a)] the linear operators $\cT$, $\cB$ and $\cP$ defined in \eqref{tyz} and \eqref{hpo} satisfy
\[
\label{proxixx}
\begin{array}{l}
\cT\succeq -\frac{1}{2}{\cP}
\quad\mbox{and}\quad
\left\langle v,
\left(\frac{1}{2}{\cP}+\sigma\cB\cB^*+\cT\right)v
\right\rangle>0,\quad \forall\, v\in\dV\setminus\{0\};
\end{array}
\]
\item[\bf(b)]
there exists a sequence of nonnegative real numbers $\{\what\varepsilon_k\}$, such that
$$
\|(\Delta^{k};\gamma^{k})\|\le\what\varepsilon_k
\quad\mbox{and}
\quad
\sum_{k=0}^{\infty}\what\varepsilon_k<+\infty;
$$
\item[\bf(c)] it holds that
$$
v^{k+1}
\approx\argmin_{v\in\dV}
\left\{\cL_\sigma
\left(v;(x^k,v^k)\right)
+\frac{1}{2}\|v-v^k\|_{\cT}^2 \right\}
$$
in the sense that
$$
\left(\Delta^{k};\gamma^{k}\right)
\in\partial_{v} \cL_{\sigma}
\left(v^{k+1};\big(x^k,v^k\big)\right)+
\cT\left(v^{k+1}-v^k\right)
\quad\mbox{and}\quad
\left\|(\Delta^{k};\gamma^{k})\right\|
\le\what\varepsilon_k.$$
\end{itemize}
\end{theorem}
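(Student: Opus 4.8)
The plan is to recognize the statement as saying that one iteration of Algorithm~\ref{alg:admm}, read on $\dV=\dY\times\range(\cG)$, is precisely one iteration of Algorithm~\ref{alg:alm} applied to problem~\eqref{probmulti} cast as an instance of \eqref{prob1} under the identifications $w\leftrightarrow v$, $\cA\leftrightarrow\cB$, $\what\Sigma_h\leftrightarrow\cP$, $\cS\leftrightarrow\cT$. Part~(a) then verifies the proximal-term hypothesis \eqref{proxi} of Assumption~\ref{ass:basic}, part~(b) verifies summability of the inexactness, and part~(c) verifies the iPALM Step-1 inclusion \eqref{wkp1}--\eqref{dk}. I would prove (a) first (it certifies $\cT$ as an admissible proximal operator), then (b), then (c), the last reusing the inclusion \eqref{subg} that has already been established.

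For (a), the inequality $\cT\succeq-\tfrac12\cP$ is routine: by \eqref{tyz} and \eqref{hpo} the quadratic form of $\tfrac12\cP+\cT$ at $v=(y,z)\in\dV$ equals $\langle y,(\tfrac12\what\Sigma^f+\cD+\cN_\sgs+\sigma\cF\cG^*[\cG\cG^*]^\dag\cG\cF^*)y\rangle$, which is nonnegative since $\cD+\tfrac12\what\Sigma^f\succeq0$ by Assumption~\ref{assp}, while $\cN_\sgs\succeq0$ and $\sigma\cF\cG^*[\cG\cG^*]^\dag\cG\cF^*\succeq0$. For the strict inequality I would complete the square in $z$: writing $\Pi:=\cG^*[\cG\cG^*]^\dag\cG$, a direct computation yields the exact identity
$$\langle v,(\tfrac12\cP+\sigma\cB\cB^*+\cT)v\rangle=\langle y,\cN' y\rangle+\sigma\|\cG^*z+\Pi\cF^*y\|^2,\qquad\cN':=\tfrac12\what\Sigma^f+\cD+\cN_\sgs+\sigma\cF\cF^*,$$
where the two $\sigma\cF\cG^*[\cG\cG^*]^\dag\cG\cF^*$ contributions cancel. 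Since the minimizing $z$ always lies in $\range(\cG)$ and $\cG\cG^*\succ0$ there by Lemma~\ref{psdsub}, strict positivity on $\dV\setminus\{0\}$ is equivalent to $\cN'\succ0$.

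Establishing $\cN'\succ0$ is the step I expect to be the main obstacle. That $\cN'\succeq0$ is immediate, as $\cN'=(\tfrac12\what\Sigma^f+\cD)+\cN_\sgs+\sigma\cF\cF^*$ is a sum of positive semidefinite operators; the difficulty is strictness, and the subtracted $\tfrac12\what\Sigma^f$ blocks a direct appeal to $\cN+\cN_\sgs\succ0$ from Lemma~\ref{prop:y}. I would write $\langle y,\cN'y\rangle=\langle y,\cR y\rangle+\|\cN_d^{-1/2}\cN_u^*y\|^2$ with $\cR:=\tfrac12\what\Sigma^f+\cD+\sigma\cF\cF^*\succeq0$, so that $\langle y,\cN'y\rangle=0$ forces $(\tfrac12\what\Sigma^f+\cD)y=0$, $\cF^*y=0$, and $\cN_u^*y=0$. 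The plan is to combine these three relations with the \emph{block} conditions $\tfrac12\what\Sigma^f_{ii}+\sigma\cF_i\cF_i^*+\cD_i\succ0$ and the factorization $\cN+\cN_\sgs=(\cN_d+\cN_u)\cN_d^{-1}(\cN_d+\cN_u^*)$ in a Schur-complement/forward-substitution argument and conclude $y=0$. I emphasize that both parts of Assumption~\ref{assp} are genuinely needed here: a purely block-diagonal estimate is insufficient (one readily builds $y\ne0$ in the joint kernel once the condition $\cD\succeq-\tfrac12\what\Sigma^f$ is dropped), so this condition must be fed into the same Schur complement that uses the positive-definite diagonal blocks.

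Parts (b) and (c) should be comparatively direct. For (b), Step~3 of Algorithm~\ref{alg:admm} gives $\|\gamma^k\|\le\tilde\varepsilon_k$, and from \eqref{delta} one has $\|\delta_\sgs^k\|\le C_1\tilde\varepsilon_k$ with $C_1$ depending on $\|\cN_u\cN_d^{-1}\|$, since each $\|\delta_i^k\|,\|\tilde\delta_i^k\|\le\tilde\varepsilon_k$. For the remaining piece of $\Delta^k$ in \eqref{hdelta}, I would rewrite $\gamma^{k-1}-\gamma^k-\cG(x^{k-1}-x^k)$ through \eqref{xgm1} and \eqref{subopt2} as a linear combination of $\gamma^{k-1}$, $\gamma^k$ and $b-\cG x^{k-1}$; the first two are controlled by $\tilde\varepsilon$'s and the last is summable by Proposition~\ref{summable}, so $\|(\Delta^k;\gamma^k)\|\le\what\varepsilon_k$ for a summable $\{\what\varepsilon_k\}$ assembled from $\{\tilde\varepsilon_k\}$ and $\{\xi_k\}$. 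For (c), the inclusion \eqref{subg} already has the required form; I would transfer it to the points $v^k,v^{k+1}$ of \eqref{defvk} by observing that $\cL_\sigma$ in \eqref{newcl} and $\cT$ in \eqref{tyz} depend on $z$ only through $\cG^*z$, that $\cG^*\Pi_{\range(\cG)}=\cG^*$, and that $\gamma^k\in\range(\cG)$ because $b\in\range(\cG)$, so $\gamma^k$ is a valid subgradient in the $z$-coordinate on $\dV$. Together with the bound from (b), this is exactly the iPALM Step-1 inclusion with error at most $\what\varepsilon_k$.
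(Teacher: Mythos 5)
Your proposal is correct and follows essentially the same route as the paper's proof: the complete-the-square/Schur-complement reduction of \eqref{proxixx} to $\cN':=\tfrac12\what\Sigma^f+\cD+\cN_\sgs+\sigma\cF\cF^*\succ0$ on $\dY$, the triangle-inequality bound on $\|(\Delta^k;\gamma^k)\|$ via Proposition \ref{summable}, and the transfer of \eqref{subg} to $\dV$ using $\cG^*\Pi_{\range(\cG)}=\cG^*$ are exactly the paper's steps. The one step you leave as a plan---concluding $y=0$ from $(\tfrac12\what\Sigma^f+\cD)y=0$, $\cF^*y=0$ and $\cN_u^*y=0$---does go through and is precisely the paper's computation \eqref{eq:split}: $\cN_u^*y=0$ lets you replace $\langle y,(\what\Sigma^f+\sigma\cF\cF^*)y\rangle$ by its block-diagonal part, so $\sum_i\langle y_i,(\tfrac12\what\Sigma^f_{ii}+\tfrac{\sigma}{2}\cF_i\cF_i^*+\cD_i)y_i\rangle=0$, and since $\tfrac12(\what\Sigma^f)_d+\cD\succeq0$ each summand dominates $\tfrac12\langle y_i,(\tfrac12\what\Sigma^f_{ii}+\sigma\cF_i\cF_i^*+\cD_i)y_i\rangle$, forcing $y_i=0$; no appeal to the factorization of $\cN+\cN_\sgs$ is needed there.
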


\begin{proof}
{\bf(a)}
According to \eqref{indcondition} in Assumption \ref{assp}
we know that $\cD \succeq -\frac{1}{2}\what\Sigma^f$.
Moreover, from \eqref{defhm} we know that $\cN_{\sgs}\succeq 0$. Thus, one can readily see from \eqref{tyz} and \eqref{hpo} that $\cT\succeq-\frac{1}{2}\cP$.
On the other hand, one can symbolically do the decomposition that
$$
\frac{1}{2}{\cP}+\sigma\cB\cB^*+\cT
=\begin{pmatrix}
\frac{1}{2}{\what\Sigma^f}
+\sigma\cF\cF^*+\cD +\cN_{\sgs}
+\sigma\cF\cG^*[\cG\cG^*]^{\dag}\cG\cF^*\quad
& \sigma\cF\cG^* \\[3mm]
\sigma\cG\cF^* & \sigma\cG\cG^*
\end{pmatrix}.
$$
From Lemma \ref{psdsub}, we know that $\cG\cG^*$ is nonsingular on the $\range(\cG)$.
Therefore, by using the definition of $\dV$ and the Schur complement condition for ensuring the positive definiteness of a linear operator, we only need to show that
$\frac{1}{2}{\what\Sigma^f}+\sigma\cF\cF^*
+\cD+\cN_{\sgs}\succ 0$ on $\dY$.
Suppose on the contrary that it is not positive definite. Then, there exists a nonzero vector $y\in\dY$ such that
$$
\begin{array}{l}
\left\langle y, \left(\frac{1}{2}{\what\Sigma^f}+\sigma\cF\cF^*
+\cD+\cN_{\sgs} \right)y\right\rangle
=\left\langle y, \left(\frac{1}{2}{\what\Sigma^f}+\cD +\sigma\cF\cF^*\right)y\right\rangle
+\left\langle y, \cN_{\sgs} y\right\rangle
=0.
\end{array}
$$
From \eqref{indcondition} of Assumption \ref{assp} and \eqref{defhm} we know that $\frac{1}{2}\what\Sigma^f + \cD + \sig\cF\cF^*\succeq 0$ and $\cN_\sgs \succeq 0$, so that
$$\begin{array}{l}
\left\langle y, \left(\frac{1}{2}{\what\Sigma^f}+\cD +\sigma\cF\cF^*\right)y\right\rangle=0=
\left\langle y, \cN_{\sgs} y\right\rangle.
\end{array}
$$
Then, by using \eqref{defhm} we can get that $\cN_u^*y=0$.
This, together with \eqref{Hu}, implies that
\[
\label{eq:split}
\begin{array}{ll}
0&=\left\langle y, \left(\frac{1}{2}{\what\Sigma^f}+\cD +\sigma\cF\cF^*\right)y\right\rangle
\\[3mm]
&
=\frac{1}{2}\left\langle y,
\left(\what\Sigma^f+\sigma\cF\cF^*\right)y\right\rangle
+\left\langle y, \left(\frac{1}{2}\sigma\cF\cF^*
+\cD\right)y\right\rangle
\\[3mm]
&
=\frac{1}{2}
\left\langle y, \left(\what\Sigma^f+\sigma\cF\cF^*\right)_d y\right \rangle
+\left\langle y, \left(\frac{1}{2}\sigma\cF\cF^*
+\cD\right)y\right\rangle
\\[3mm]
&
=\left\langle y, \left(\frac{1}{2}(\what\Sigma^f)_d\,+\cD\right) y\right\rangle
+\frac{\sigma}{2}\left\langle y, (\cF\cF^*)_d y\right\rangle
+\frac{\sigma}{2}\left\langle y, \cF\cF^* y\right\rangle,
\end{array}
\]
where
$$
\begin{array}{rcl}
(\what\Sigma^f+\sigma\cF\cF^*)_d &{:=} &
\diag\big(\what\Sigma^f_{11}+
\sigma\cF_1\cF^*_1,\ldots,\what\Sigma^f_{ss}+
\sigma\cF_s\cF^*_s\big),
\\[3mm]
(\cF\cF^*)_d& {:=}&
\diag\big(\cF_1\cF^*_1,\ldots,\cF_s\cF^*_s\big).
\end{array}
$$
Since $\cD\succeq -\frac{1}{2}\what\Sigma^f$ implies $\frac{1}{2}(\what\Sigma^f)_d\,+\cD\succeq 0$, we obtain from \eqref{eq:split} that
$$
\begin{array}{l}
\left\langle y, \left(\frac{1}{2}(\what\Sigma^f)_d\,+\cD\right) y\right\rangle
=\frac{\sigma}{2}\left\langle y, (\cF\cF^*)_d\, y\right\rangle
=\frac{\sigma}{2}\left\langle y, \cF\cF^* y\right\rangle=0,
\end{array}
$$
which contradicts the requirement in Assumption \ref{assp} that $\frac{1}{2}\what\Sigma^f_{ii}+\sigma\cF_i\cF_i^*+\cD_i\succ 0$ for all $i=1,\ldots,s$.
Therefore, it holds that $\frac{1}{2}{\what\Sigma^f}+\sigma\cF\cF^*
+\cD+\cN_{\sgs}\succ 0$, and this completes the proof of (a).

\smallskip
\noindent{\bf (b)} From the definition of $\{\Delta^k\}$ in \eqref{hdelta} one can see that for all $k\ge 0$,
$$
\|\Delta^k\|
\le\|\delta_{\sgs}^{k}\|+\|\cF\cG^*
 [\cG\cG^*]^{\dag}\|
\|\gamma^{k-1}-\gamma^{k} -\cG (x^{k-1}-x^{k})\|.
$$
Then, by using the fact that $\max\{\|\tilde\delta_i^k\|, \norm{\delta_i^k}, \norm{\gamma^k}\}\le\tilde\varepsilon_k$,
we can get from Proposition \ref{summable}
and the definition of $\delta_{\sgs}^{k}$ in \eqref{delta} that for all $k\ge 1$,
$$
\begin{array}{ll}
\|(\Delta^k;\gamma^k)\| \le
\|\gamma^k\|+\|\Delta^k\|
\\[3mm]\le \what\varepsilon_k: = (s+1)\tilde\varepsilon_k
+2s\|\cN_u\cN_d^{-1}\|\tilde\varepsilon_k
+\|\cF\cG^*
[\cG\cG^*]^{\dag}\|
\big(\tilde\varepsilon_{k-1}+ \xi_{k-1}
+\tilde\varepsilon_k + \xi_k\big).
\end{array}
$$
Moreover, we define $\what\varepsilon_0:=\|(\Delta^0;\gamma^0)\|$.
Then, according to Proposition \ref{summable} and the fact that the sequence $\{\tilde\varepsilon_k\}$ is summable, we know that $\sum_{k=0}^{\infty}\what\varepsilon_k<+\infty$.

{\bf(c)}
According to \eqref{tyz}, \eqref{subg} and \eqref{newcl}, we only need to show that
$$
\partial_{(y,z)}\cL_{\sigma}\left(\big(y^{k+1},z^{k+1}\big);\big(x^k,y^k\big)\right)
=
\partial_{(y,z)}\cL_{\sigma}\left(\Big(y^{k+1},\Pi_{\range(\cG)}(z^{k+1})\Big);\big(x^k,y^k\big)\right),
\quad
\forall\, k\ge 0.
$$
From \eqref{mfy} and \eqref{lagr} we can get that
$$
\partial_{y}\cL_\sigma(y,z;(x,y'))
=
\begin{pmatrix}
\partial_{y_1}p(y_1)
\\
0
\end{pmatrix}
+\nabla f(y')+\what\Sigma_f(y-y')
+\cF x
+{\sigma}\cF(\cF^*y+\cG^*z-c)
$$
and
$$
\nabla_{z}\cL_\sigma(y,z;(x,y'))
=-b+\cG x+{\sigma}\cG(\cF^*y+\cG^*z-c).
$$
Therefore, by using the fact that $\cG^*z^{k+1}=\cG^*\Pi_{\range(\cG)}(z^{k+1})$, $\forall\,k\ge 0$, we know that part (c) of the theorem holds. This completes the proof.
\qed
\end{proof}

\begin{remark}
One can see that in Algorithm \ref{alg:admm}, the sequence $\{(x^k,y^k,z^k)\}$ was generated, while the sequence
$\{\Pi_{\range(\cG)}(z^k)\}$ has never been explicitly calculated.
Note that once $z^k$ is computed, only the vector $\cG^*z^k$ is needed during the next iteration, instead of $z^k$ itself. Since
$\cG^*z^k=\cG^*\Pi_{\range(\cG)}(z^{k}), \forall\, k\ge 0 $, one may view the sequence $\{\Pi_{\range(\cG)}(z^{k})\} \in \range(\cG)$ as a shadow sequence of $\{z^k\}$. It has never been explicitly computed, but still plays an important role on establishing the convergence of the algorithm. In fact, similar observations have been made and extensively used in \cite{lixd2014,liqsdpnal}.
\end{remark}

By combining the results of Theorem \ref{thm:main}
and Theorem \ref{theomain}, one can readily get the following convergence theorem of Algorithm \ref{alg:admm}.

\begin{theorem}
\label{maintheo}
Suppose that Assumptions \ref{ass} and \ref{assp} hold. Let $\{(x^k,y^k,z^k)\}$ be the sequence generated by Algorithm \ref{alg:admm}.
Then,
\begin{itemize}
\item[\bf(a)]
the sequence
$\left\{\left(y^k,\Pi_{\range(\cG)}(z^k)\right)\right\}$
converges to a solution to problem \eqref{probmulti}
and the sequence $\{x^k\}$ converges to a solution to the dual of \eqref{probmulti};

\item[\bf(b)]
any accumulation point of the sequence $\{(y^k, z^k)\}$ is a solution to problem \eqref{probmulti};

\item[\bf(c)] the sequence $\{
p(y_1^k)+f(y^k)-\langle b,z^k\rangle\}$ of the objective values converges to the optimal value of problem \eqref{probmulti}, and
$$
\lim_{k\to\infty} (\cF^*y^k+\cG^*z^k-c) = 0;
$$

\item[\bf(d)] it holds with $\bK$ being the solution set to the KKT system \eqref{kkt} that
$$
\lim_{k\to\infty}
\dist\left((x^k,y^k,z^k), \bK\right)=0;
$$

\item[\bf(e)] if the linear operator $\cG$ is surjective,
the whole sequence $\{(x^k,y^k,z^k)\}$ converges to a solution to the KKT system \eqref{kkt} of problem \eqref{probmulti}.
\end{itemize}
\end{theorem}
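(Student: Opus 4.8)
The plan is to recognize problem \eqref{probmulti} as a concrete instance of the abstract problem \eqref{prob1} and, through the dictionary set up in Theorem \ref{theomain}, to read off Algorithm \ref{alg:admm} as a genuine realization of Algorithm \ref{alg:alm}, so that all the conclusions descend from Theorem \ref{thm:main}. Concretely, I would take $\dW:=\dV=\dY\times\range(\cG)$, the nonsmooth part $\varphi(y,z):=p(y_1)$, the smooth part $h(y,z):=f(y)-\langle b,z\rangle$ (so that $\nabla h=(\nabla f(y);-b)$ and the majorizing operator is $\what\Sigma_h=\cP$ from \eqref{hpo}), the constraint operator $\cA:=\cB$ with $\cB^*(y,z)=\cF^*y+\cG^*z$, and the proximal operator $\cS:=\cT$ from \eqref{tyz}. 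With $v^k$ as in \eqref{defvk}, Theorem \ref{theomain}(a) supplies exactly the two relations required by Assumption \ref{ass:basic}, namely $\cT\succeq-\frac{1}{2}\cP$ and positive definiteness of $\frac{1}{2}\cP+\sigma\cB\cB^*+\cT$ on $\dV$; Theorem \ref{theomain}(b) furnishes the summable error bound $\|(\Delta^k;\gamma^k)\|\le\what\varepsilon_k$; and Theorem \ref{theomain}(c) shows that $v^{k+1}$ is precisely the inexact proximal step \eqref{wkp1}--\eqref{dk} with error $d^k=(\Delta^k;\gamma^k)$. Since Step 4 of Algorithm \ref{alg:admm} coincides with Step 2 of Algorithm \ref{alg:alm} under $\cA^*v^{k+1}=\cF^*y^{k+1}+\cG^*z^{k+1}$, the pair $\{(x^k,v^k)\}$ is an admissible iterate sequence of Algorithm \ref{alg:alm}. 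A short computation then identifies the reformulated KKT system \eqref{kkt1} with \eqref{kkt}: the $z$-block reads $\cG x-b=0$ (no projection is needed, as $b\in\range(\cG)$ forces $\cG x-b\in\range(\cG)$).

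Granting this identification, parts (a), (d) and (e) are almost immediate. Theorem \ref{thm:main}(d) gives $(x^k,v^k)\to(x^\infty,v^\infty)$ with $(x^\infty,v^\infty)$ solving the reformulated KKT system; writing $v^\infty=(y^\infty,z^\infty)$ with $z^\infty\in\range(\cG)$, the remark following Assumption \ref{ass} (via \cite[Corollary 30.5.1]{rocbook}) shows $(y^\infty,z^\infty)$ solves \eqref{probmulti} and $x^\infty$ solves its dual, which is (a). For (d), I would exploit that the KKT system \eqref{kkt} constrains $z$ only through $\cG^*z$, so $\bK$ is invariant under translations of its $z$-component by $\range(\cG)^\perp$; hence for each $k$ the point $(x^\infty,y^\infty,z^\infty+\Pi_{\range(\cG)^\perp}(z^k))$ lies in $\bK$, and bounding $\dist$ by the distance to this point collapses to $\|(x^k,v^k)-(x^\infty,v^\infty)\|\to0$. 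Part (e) is the special case $\range(\cG)=\dZ$, where $\Pi_{\range(\cG)}$ is the identity and $v^k=(y^k,z^k)$, so the whole computed sequence converges.

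For (b) and (c) the key observation is that the objective and the constraint in \eqref{probmulti} depend on $z$ only through $\Pi_{\range(\cG)}(z)$, because $b\in\range(\cG)$ and $\cG^*z=\cG^*\Pi_{\range(\cG)}(z)$. Thus the constraint residual equals $\cB^*v^k-c\to0$, giving the second claim in (c); and any accumulation point $(\bar y,\bar z)$ of $\{(y^k,z^k)\}$ has $\bar y=y^\infty$ and $\Pi_{\range(\cG)}(\bar z)=z^\infty$, so $(\bar y,\bar z)$ is feasible with the same objective value as the optimal $(y^\infty,z^\infty)$, yielding (b). The convergence of the objective values needs a little more: lower semicontinuity of $\phi$ gives $\liminf_k\big(p(y_1^k)+f(y^k)-\langle b,z^k\rangle\big)\ge\phi(v^\infty)$, while for the matching upper bound I would extract from the $y_1$-optimality condition in Step 2 a subgradient $s^k\in\partial p(y_1^{k+1})$ by transposing all the smooth terms of $\partial_{y_1}\cL_\sigma$ to the other side. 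This $s^k$ stays bounded because $x^k$ converges, $y^{k+1}-y^k\to0$, the residual $\to0$ and $\delta_1^k\to0$, so the subgradient inequality $p(y_1^\infty)\ge p(y_1^{k+1})+\langle s^k,y_1^\infty-y_1^{k+1}\rangle$ forces $\limsup_k p(y_1^k)\le p(y_1^\infty)$; continuity of $f$ and of the linear term then completes (c).

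I expect the main obstacle to be the bookkeeping around the shadow sequence $\{\Pi_{\range(\cG)}(z^k)\}$: the clean convergence statements hold for $v^k$, but the theorem is phrased in terms of the genuinely computed $z^k$, whose component in $\range(\cG)^\perp$ is never controlled by the algorithm. Establishing (b) and (d) therefore hinges on the two structural facts that $\bK$ is closed under $\range(\cG)^\perp$-shifts in $z$ and that feasibility and objective are blind to that subspace, both of which follow from $b\in\range(\cG)$ together with Lemma \ref{psdsub}. The only genuinely analytic wrinkle is the upper bound for $p$ in (c), which must be obtained from the bounded-subgradient argument above rather than from any continuity of the merely closed convex function $p$.
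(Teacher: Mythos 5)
Your proposal is correct and follows essentially the same route as the paper: identify $\{(x^k,v^k)\}$ with $v^k=\big(y^k;\Pi_{\range(\cG)}(z^k)\big)$ as an iPALM sequence via Theorem \ref{theomain}, invoke Theorem \ref{thm:main}, and then transfer the conclusions back to $(x^k,y^k,z^k)$ using the facts that the KKT system, the feasibility residual and the objective depend on $z$ only through $\cG^*z$ and $\langle b,z\rangle$, both of which are blind to the $\range(\cG)^\perp$ component. The one place where you are actually more careful than the paper is the bound $\limsup_k p(y_1^k)\le p(y_1^\infty)$ in part (c): the paper simply asserts continuity of the objective on its domain, whereas your bounded-subgradient argument extracted from the $y_1$-subproblem optimality condition supplies the justification that is needed when $y_1^\infty$ lies on the boundary of $\dom p$.
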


\begin{proof}
{\bf(a)} Note that the sequence $\left\{v^k = \left(y^k;\Pi_{\range(\cG)}(z^k)\right) \right\}$ defined in \eqref{defvk} lies in $\dY\times\range(\cG)$.
By using Theorem \ref{theomain}(c), one can treat the sequence $\{(x^k,v^k)\}$ generated by Algorithm \ref{alg:admm}
as the one generated by Algorithm \ref{alg:alm} with the given initial point
$(x^0,v^0)$.
In addition, \eqref{proxixx} in Theorem \ref{theomain} guarantees that condition \eqref{proxi} in Assumption
\ref{ass:basic} holds. Thus, by Theorem \ref{thm:main}, the sequence $\{(x^k,v^k)\}$ converges to a solution to the KKT system \eqref{kkt}, i.e., the sequences $\left\{\left(y^k,\Pi_{\range(\cG)}(z^k)\right)\right\}$
and $\{x^k\}$ converge to a solution to problem \eqref{probmulti} and its dual, respectively.

\smallskip
\noindent{\bf(b)} From {(a)}, we see that $\lim_{k\to\infty}(x^k,y^k,\Pi_{\range(\cG)}(z^k)) = (x^*,y^*,z^*)$ which is a solution to the KKT system \eqref{kkt}. Since $\cG^*z^k=\cG^*\Pi_{\range(\cG)}(z^{k}), \forall\, k\ge 1 $, any accumulation point, say $z^{\infty}$ of $\{z^k\}$ satisfies $\cG^*z^\infty=\cG^*z^*$. Then, it is easy to verify that $(x^*,y^*,z^\infty)$ also satisfy the KKT system \eqref{kkt}.
Therefore, $(y^*,z^\infty)$ is a solution to problem \eqref{probmulti}.

\smallskip
\noindent{\bf(c)}
From {(a)} and the fact that the objective function of problem \eqref{probmulti} is continuous on its domain, we know that $\{p(y_1^k)+f(y^k)-\langle b,\Pi_{\range(\cG)}(z^k)\rangle\}$ converges to the optimal value of problem \eqref{probmulti}.
Since $b\in \range(\cG)$, it holds that for any $k\ge 1$, $\inprod{b}{z^k} = \inprod{b}{\Pi_{\range(\cG)}(z^k)}$. Thus,
$$
p(y_1^k)+f(y^k)-\langle b,\Pi_{\range(\cG)}(z^k) \rangle =
p(y_1^k)+f(y^k)-\langle b, z^k \rangle, \quad \forall k\ge 1.
$$
Therefore, the sequence $\{p(y_1^k)+f(y^k)-\langle b,z^k\rangle\}$ converges to the optimal value of problem \eqref{probmulti}.
Meanwhile, since $\cG^*z^k=\cG^*\Pi_{\range(\cG)}(z^{k})$, we further have that
$$
\lim_{k\to \infty} \left(\cF^*y^k+\cG^*z^k-c\right) =
\lim_{k\to\infty} \left(\cF^*y^k+\cG^*\Pi_{\range(\cG)}(z^{k})-c\right) = 0.
$$

\smallskip
\noindent{\bf(d)}
From {(a)}, we have that $(x^*,y^*,z^*)$, the limit point of $\{(x^k,y^k,\Pi_{\range(\cG)}(z^k))\}$, is a solution to the KKT system \eqref{kkt}, i.e., $(x^*,y^*,z^*)\in {\bf K}$. Since $\cG^*\big(z^k-\Pi_{\range(\cG)}(z^k)\big) =0$ for any $k\ge 1$, it is not difficult to see that
$$
\left(x^*,y^*,z^*+\big(z^k - \Pi_{\range(\cG)}(z^k)\big)\right) \in {\bf K}, \quad \forall k\ge 1.
$$
Therefore, it holds for all $k\ge 1$
$$
\dist\left((x^k,y^k,z^k),
\bK\right)
\le
\|x^k-x^*\|+\|y^k-y^*\|+\norm{\Pi_{\range(\cG)}(z^k)-z^*}
$$
and $\lim_{k\to \infty} \dist\left((x^k,y^k,z^k),
\bK\right) = 0$.

\smallskip
\noindent{\bf(e)} In this case, it holds that $\range(\cG)=\dZ$ and $z^{k}=\Pi_{\range(\cG)}(z^{k}),\,\forall\, k\ge 0$. The result follows from (a), which completes the proof of the theorem.
\qed\end{proof}
We make the following remark on Theorem \ref{maintheo}.
\begin{remark}
Without any additional assumptions on $\cG$, one can observe that the solution set of problem \eqref{probmulti} is unbounded
and the sequence $\{z^k\}$ generated by Algorithm \ref{alg:admm} may also be unbounded. Fortunately, we are still able to show in Theorem \ref{maintheo}(a) and (c) that the sequence $\{\big(x^k,y^k,\Pi_{\range(\cG)}(z^k)\big)\}$ converges to a solution to the KKT system \eqref{kkt}, and both the objective and the feasibility converge to the optimal value and zero, respectively.
Meanwhile, we would like to emphasize that the surjectivity assumption on $\cG$
 in Theorem \ref{maintheo}(e) is not restrictive at all.
 Indeed, this assumption simply means
that there are no redundant equations in the linear constraints $\cG x = b$ in the primal problem \eqref{modelexample}. If necessary, well established numerical linear algebra techniques can be used to remove redundant equations from $\cG x = b$.
\end{remark}

\subsection{The Two-Block Case}
\label{sect41}
Consider the two-block case that $\dY=\dY_1$ and $f$ is vacuous, i.e., the following problem
\[
\label{prob2b}
\min_{y,z}\left\{p(y)-\langle b,z\rangle\,|\,\cF^*y+\cG^* z=c\right\}.
\]
Assume that the KKT system of problem \eqref{prob2b} admits a nonempty solution set $\bK$.
For such a two-block problem,
Algorithm \ref{alg:admm} without the proximal terms and the inexact computations reduces to the classic ADMM.
Then, by Theorem \ref{maintheo}, the sequence $\left\{\big(x^k, y^k,\Pi_{\range{(\cG)}}(z^k)\big)\right\}$ generated by the classic ADMM or its inexact variants with $\tau\in(0,2)$ (in the order that the $y$-subproblem is solved before the $z$-subproblem) converges to {a point}
in $\bK$ if either $\cF$ is surjective or $p$ is strongly convex.
Moreover, if $\cG$ is also surjective, we have that the sequence $\left\{\big(x^k, y^k,z^k\big)\right\}$ converges to a point in $\bK$.
Note that the assumptions we made for problem \eqref{prob2b}
are {apparently} weaker than those in \cite{gabay1976}, where $\cF$ is assumed to be the identity operator, $\cG$ is surjective, and $p$ is assumed to be strongly convex.
Moreover, in \cite[Theorem 3.1]{gabay1976}, only the convergence of the primal sequence $\{(y^k,z^k)\}$ and the boundedness of the dual sequence $\{x^k\}$ were obtained.

The detailed comparison between the results in this paper and those in \cite{gabay1976} is presented in Table \ref{tablecompare}.
As can be observed from this table, the convergence result on the dual sequence $\{x^k\}$ is easier to be derived than that of the primal sequence $\{(y^k,z^k)\}$, and this result is consistent with the results in \cite{chennote} for the classic ADMM and the ALM in \cite{roc76b}.
Hence, the results derived in this paper properly resolves the questions we have mentioned in the introduction.

At last, we should mention that, in Sun et al. \cite[Theorem 3.3 (iv)]{sty2015}, a similar result to ours has been derived with the requirements that the initial multiplier $x^0$ satisfies $\cG x^0-b=0$ and all the subproblems are solved exactly. Here, we
are able to relax these requirements to the most
general case and extend our results to the more interesting and challenging multi-block problems.

\begin{table}
\caption{Comparison between \cite{gabay1976} and this paper. In the table `$\bf SOL$' denotes the solution set to problem \eqref{prob2b}, `{\bf X}' denotes the set of multipliers (the solution set to the dual problem) to problem \eqref{prob2b}, and
`$\bK$' denotes the solution set to the KKT system or problem \eqref{prob2b}, i.e., $\bK={\bf X}\times{\bf SOL}$.
The symbol $\to$ means that the sequence on its left-hand-side is convergent, and converges to a point in its right-hand-side.}
\label{tablecompare}
\small
\begin{center}
\begin{tabular}{|c|c|c|c|}
\hline&&\mc{2}{|c|}{}\\[-3mm]
{\bf Item} $\backslash$ {\bf Ref}
& \cite{gabay1976}
& \mc{2}{|c|}{This paper}
\\[1mm]
\hline&&\mc{2}{|c|}{}
\\[-2mm]
{\bf Updating rules}
& $z\Rightarrow y\Rightarrow x$\quad$\&$\quad $\tau\in(0,2)$
& \mc{2}{|c|}{$y\Rightarrow z\Rightarrow x$\quad$\&$\quad $\tau\in(0,2)$}
\\[1.5mm]
\hline&&&
\\[-2mm]
\multirow{2}{*}{\bf Assumptions-$y$}
& $p$ strongly convex
& $p$ strongly convex
& $p$ strongly convex
\\[3mm]
& {\color{blue} \bf and} $\cF$ the identity operator
& {\color{blue}\bf or} $\cF$ surjective
& {\color{blue}\bf or} $\cF$ surjective
\\[3mm]
\hline&&&
\\[0.5mm]
{\bf Assumptions-$z$}
&
$\cG$ surjective & - & $\cG$ surjective
\\[3mm]
\hline&&&
\\[-1.5mm]
\multirow{2}{*}{\bf Sequences}
& $\{(y^k,z^k)\}\to {\bf SOL}$
& $\dist\Big((x^k,y^k,z^k),\bK\Big)\to 0$
& $\{(y^k,z^k)\}\to {\bf SOL}$
\\[2mm]
& $\{x^k\}$ bounded
& $\{x^k\}\to{\bf X}$
& $\{x^k\}\to{\bf X}$
\\[3mm]
\hline
\end{tabular}
\end{center}
\end{table}
\normalsize

\subsection{Linear Rate of Convergence and Iteration Complexity}
Theorem \ref{ratethm} has provided a tool, which can be used together with Theorem \ref{theomain} to analyze the linear convergence rate of the sequence generated by Algorithm \ref{alg:admm}, i.e., one only need to verify whether \eqref{dkerr} is valid for this sequence, provided that the metric subregular property \eqref{msr} holds.
However, such a verification is not as straightforward as it conceptually seems.

Here, we establish a linear convergence result for the case that the linear system
in step 3 of Algorithm \ref{alg:admm} is solve exactly, but leave the general cases as a topic for further study.
For this purpose, we view problem \eqref{probmulti} as an instance of \eqref{prob1} with
\[
\label{defnewvariable}
\left\{
\begin{array}{l}
\varphi(w):=p(y_1),
\\[3mm]
h(w):=f(y)-\langle b,z\rangle,
\\[3mm]
\cA^* w:=\cF^*y+\cG^* z,
\end{array}\right.\quad \forall w=(y,z)\in\dW:=\dY\times\dZ.
\]
Then, the corresponding KKT residual mapping of problem \eqref{probmulti} can be given by \eqref{residual}.
Moreover, the self-adjoint linear operator $\Omega$ defined in \eqref{omega} is given by
$
\Omega(x;(y;z))=(x;\Theta^{\frac{1}{2}} (y;z))
$,
where
$
\Theta=\tau\sigma(\cP+\cT+\frac{(2-\tau)\sigma}{3}\cA\cA^*)
$
with $\cT$ and $\cP$ being defined in \eqref{tyz} and \eqref{hpo}, respectively.
In fact, we further have that
\[
\label{eq:omegarangez}
\Omega\big(x;(y;z)\big)
= \Omega\left(x;\big(y;\Pi_{\range{(\cG)}}(z)\big)\right), \quad \forall (x,y,z)\in \dX\times \dY\times \dZ.
\]

\begin{theorem}
\label{ratethmsp}
Suppose that Assumptions \ref{ass} and \ref{assp} hold.
Let $\{u^k=(x^k,w^k)\}$ with $w^k:=(y^k; z^k)$ be the sequence generated by Algorithm \ref{alg:admm} such that
$\{v^k := \big(x^k,y^k,\Pi_{\range{(\cG)}}(z^k)\big)\}$ converges to $v^*\in \bK$.
It holds that
\[
\label{eq:distudistv}
\dist_\Omega(u^k,\bK) = \dist_\Omega(v^k,\bK),\quad \forall k\ge 0.
\]
Suppose that $b-\cG x^{0}=0$ and $\gamma^k=0$ for all $k\ge 0$.
Suppose that the KKT residual mapping $\cR$ defined in \eqref{residual} (with the notation in \eqref{defnewvariable}) is metric subregular at $v^*$ for $0\in\dU$ with the modulus $\kappa>0$, in the sense that
there exists a constant $r>0$ such that \eqref{msr} holds with $\overline u=v^*$.
{Let $\{\tilde{\eta}_k\}$ be a given sequence of nonnegative numbers that converges to $0$ in the limit.
Suppose that in addition to
satisfying $\max\{\norm{\tilde{\delta}_i^k}, \norm{\delta_i^k}\mid i=1,\ldots,s\} \leq \tilde{\varepsilon}_k$,}
there exists an integer $k_0>0$ such that for any $k\ge k_0$, it holds that
\[
\label{errorss}
\max_{1\le i\le s}\left\{\norm{\tilde{\delta}_i^k}, \norm{{\delta}_i^k} \right\} \; \le
 \; \tilde \eta_k\|v^k - v^{k+1}\|.
\]
Then, for all $k$ sufficiently large, it holds that
$\dist_\Omega(u^{k+1},\bK)
\le\vartheta^k\, \dist_\Omega(u^k,\bK)$
with $\sup\limits_{k\geq k_0} \{ \vartheta_k\} < 1$, i.e., the convergence rate of $\dist_\Omega(u^k,\bK)$ is Q-linear when $k$ is sufficiently large.
\end{theorem}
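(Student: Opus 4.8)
The plan is to reduce the assertion to the local-rate result for Algorithm \ref{alg:alm} (Theorem \ref{ratethm}) via the equivalence of Theorem \ref{theomain}, after first disposing of the gap between the full iterate $z^k$ and its shadow $\Pi_{\range(\cG)}(z^k)$. First I would dispatch \eqref{eq:distudistv}: since $\|\cdot\|_\Omega=\|\Omega\,\cdot\,\|$ and, by \eqref{eq:omegarangez}, $\Omega u^k=\Omega v^k$ for every $k$, for any $u'\in\bK$ we have $\|u^k-u'\|_\Omega=\|\Omega u^k-\Omega u'\|=\|\Omega v^k-\Omega u'\|=\|v^k-u'\|_\Omega$, and taking the infimum over $u'\in\bK$ gives $\dist_\Omega(u^k,\bK)=\dist_\Omega(v^k,\bK)$. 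Hence it suffices to prove the Q-linear decay of $\dist_\Omega(v^k,\bK)$.

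By Theorem \ref{theomain}(c), the sequence $\{v^k=(x^k,(y^k;\Pi_{\range(\cG)}(z^k)))\}$ is exactly a sequence generated by Algorithm \ref{alg:alm} on the space $\dV$ (with the operators $\cT,\cB,\cP$), whose residual at step $k$ is $d^k=(\Delta^k;\gamma^k)$ and whose Assumption \ref{ass:basic} is guaranteed by \eqref{proxixx}. This sequence converges to $v^*$, at which $\cR$ is metric subregular by hypothesis. Thus Theorem \ref{ratethm} applies as soon as its Assumption \ref{assrate} is verified for $\{v^k\}$, and the only remaining task is to produce the error bound \eqref{dkerr}.

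The crux is that the exact $z$-solve assumption converts \eqref{errorss} into \eqref{dkerr}. With $\gamma^k=0$ for all $k\ge0$ and $b-\cG x^0=0$, the recursion $b-\cG x^{k}=(1-\tau)(b-\cG x^{k-1})-\tau\gamma^{k-1}$ from the proof of Proposition \ref{summable} yields, by induction, $b-\cG x^{k}=0$ for all $k\ge0$, so that $\cG(x^{k-1}-x^k)=0$ for $k\ge1$. Substituting $\gamma^{k-1}=\gamma^k=0$ and $\cG(x^{k-1}-x^k)=0$ into \eqref{hdelta} collapses the correction term and gives $\Delta^k=\delta_\sgs^k$, whence $d^k=(\delta_\sgs^k;0)$ for every $k\ge1$. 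Using the definition \eqref{delta} of $\delta_\sgs^k$ together with \eqref{errorss}, one bounds $\|d^k\|=\|\delta_\sgs^k\|\le\sqrt{s}\,(1+2\|\cN_u\cN_d^{-1}\|)\,\tilde\eta_k\,\|v^k-v^{k+1}\|$. Since $\|v^k-v^{k+1}\|$ is precisely the norm of the Algorithm \ref{alg:alm} step $\|u^k_{\mathrm{ALM}}-u^{k+1}_{\mathrm{ALM}}\|$ (as $v^k$ is the ALM iterate), Assumption \ref{assrate} holds with $\eta_k:=\sqrt{s}\,(1+2\|\cN_u\cN_d^{-1}\|)\,\tilde\eta_k$; because $\tilde\eta_k\to0$, for all large $k$ we have $\sup_k\eta_k<1/\mu$, and by the remark following Theorem \ref{ratethm} condition \eqref{etarate} also holds eventually.

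Invoking Theorem \ref{ratethm} then produces a threshold $\bar k$ and rates $\vartheta^k$ with $\sup_{k\ge k_0}\vartheta^k<1$ such that $\dist_\Omega(v^{k+1},\bK)\le\vartheta^k\dist_\Omega(v^k,\bK)$ for all large $k$, and combining this with \eqref{eq:distudistv} gives the claimed Q-linear decay of $\dist_\Omega(u^k,\bK)$. The step I expect to be the main obstacle is the identification $\Delta^k=\delta_\sgs^k$, i.e.\ recognizing that eliminating the $z$-subproblem inexactness is exactly what removes the term $\cF\cG^*[\cG\cG^*]^{\dag}(\gamma^{k-1}-\gamma^k-\cG(x^{k-1}-x^k))$ from $\Delta^k$: the infeasibility $b-\cG x^{k}$ is only summable (controlled by $\xi_k$ in Proposition \ref{summable}) and is \emph{not} proportional to the step-length $\|v^k-v^{k+1}\|$, so without the exact solve $d^k$ cannot be dominated by $\eta_k\|u^k-u^{k+1}\|$ and Assumption \ref{assrate} would fail. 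The comparison $\|v^k-v^{k+1}\|$ with the ALM step norm and the constant in the bound on $\|\delta_\sgs^k\|$ are routine.
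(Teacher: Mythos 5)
Your proposal is correct and follows essentially the same route as the paper's proof: establish \eqref{eq:distudistv} from \eqref{eq:omegarangez}, use $b-\cG x^0=0$ and $\gamma^k\equiv 0$ together with \eqref{bmgs} to kill the correction term in \eqref{hdelta} so that $\Delta^k=\delta_\sgs^k$, bound $\|d^k\|\le\sqrt{s}\,(1+2\|\cN_u\cN_d^{-1}\|)\,\tilde\eta_k\|v^k-v^{k+1}\|$ to verify Assumption \ref{assrate}, and then invoke Theorem \ref{ratethm} through the equivalence of Theorem \ref{theomain}. The only cosmetic slip is writing $\|\cdot\|_\Omega=\|\Omega\,\cdot\|$ rather than $\|\Omega^{1/2}\cdot\|$, which is harmless here since $\Omega$ and $\Omega^{1/2}$ share the same kernel.
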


\begin{proof}
By \eqref{eq:omegarangez}, we have that for all $k\ge 0$,
\begin{align*}
\dist^2_{\Omega}(u^k,\bK) ={}& \inf_{u\in \bK} \frac{1}{2} \inprod{u^k - u}{\Omega(u^k - u)} = \inf_{u\in \bK} \frac{1}{2} \inprod{u^k - u}{\Omega(v^k) - \Omega(u)} \\[5pt]
={}& \inf_{u\in \bK} \frac{1}{2} \inprod{v^k - u}{\Omega(v^k - u)}
= \dist^2_{\Omega}(v^k,\bK),
\end{align*}
i.e., \eqref{eq:distudistv} holds.
Since $b-\cG x^{0}=0$ and $\gamma^k=0$ for all $k\ge 0$,
according to \eqref{bmgs} one has that
$$
\|b-\cG x^{k}\|\le|1-\tau|^{k}\,\|b-\cG x^{0}\|+\tau\sum_{i=1}^{k}|1-\tau|^{k-i}\,\|\gamma^{i-1}\|
=0,
\quad\forall k\ge 0.
$$
Therefore, by \eqref{delta} and \eqref{hdelta} one knows that
$$
\Delta^k:=\delta_\sgs^{k}-\cF\cG^*
 (\cG\cG^*)^{-1}
\cG (x^{k}-x^{k-1})
=\delta
^{k}+\cN_u\cN_d^{-1}(\delta^k-\tilde\delta^k).
$$
Thus, we can get that for all $k\ge 0$,
$$
\begin{array}{ll}
\|d^k\|=\|\Delta^k\|
&\ds
\le\; (1+2\|\cN_u\cN_d^{-1}\|)
\max\{\norm{\tilde{\delta}},\norm{\tilde{\delta}}\}
\\[3mm]
&\ds\le\; \sqrt{s}\left(1+2\|\cN_u\cN_d^{-1}\|\right)\tilde\eta_k\|v^k-v^{k+1}\|,
\end{array}
$$
where $d^k:=(\what\Delta^k;\gamma^k)\in\dW$.
Define $\eta_k=\sqrt{s}(1+2\|\cN_u\cN_d^{-1}\|)\tilde\eta_k$.
Then, it holds that $\eta_k\to 0$ and $\|d^k\|\le\eta^k\|v^k-v^{k+1}\|$.
Therefore, by Theorem \ref{ratethm}, we know that for all $k$ sufficiently large
$$
\dist_\Omega(v^k,\bK)
\le\vartheta^k\, \dist_\Omega(v^k,\bK)
$$
with $\sup\limits_{k\geq k_0} \{ \vartheta_k\} < 1$, which, together with \eqref{eq:distudistv}, implies
$$
\dist_\Omega(u^k,\bK)
\le\vartheta^k\, \dist_\Omega(u^k,\bK) \quad \mbox{for all $k$ sufficiently large.}
$$
This completes the proof.
\qed
\end{proof}

\begin{remark}
Note that, different from the condition \eqref{dkerr} in Assumption \ref{assrate}, the condition \eqref{errorss} here is generally not directly verifiable during the numerical implementation.
However, Theorem \ref{ratethmsp} does provide us a very important theoretical guideline on implementing Algorithm \ref{alg:admm}, i.e., in the $k$-th iteration, it is likely to be beneficial to solve the subproblems to an accuracy higher than the dual feasibility $\|\cF^*y^k+\cG^*z^k-c\|$.
In fact, this phenomenon has already been observed during our numerical experiments.
We should also mention that even for the $2$-block case, the study on the linear convergence of inexact ADMMs with shorter step-length $\tau\in\big(0,\frac{1+\sqrt{5}}{2}\big)$ is still not as mature as the study for their exact counterparts, especially when compared with the recently developed results, e.g., in \cite{han,zhangning}.
Suitable criteria that generalize the condition \eqref{dkerr} for terminating the subproblems are still lacking.
We note that the results presented in Theorem \ref{ratethmsp} are still far from complete, and more effort should be spent on this part in the future.
\end{remark}

Finally, different from the above discussions on the convergence rate, we can establish
the following non-ergodic iteration complexity for the sequence generated by Algorithm \ref{alg:admm}
by a direct application of Theorem \ref{theomain}.

\begin{theorem}
Suppose that Assumptions \ref{assp} and \ref{ass} hold.
Let $\{u^k=(x^k,w^k)\}$ with $w^k:=(y^k; z^k)$ be the sequence generated by Algorithm \ref{alg:admm} such that
$\{v^k := \big(x^k,y^k,\Pi_{\range{(\cG)}}(z^k)\big)\}$ converges to $v^*\in \bK$.
It holds that the KKT residual \eqref{residual}, with $\cB$ and $\cP$ given by \eqref{hpo}, satisfies
$$
\min_{0\le j\le k}\|\cR(u^{j})\|^2\le \varrho/k,
\quad\mbox{and}
\quad
\lim_{k\to\infty}\{k\cdot\min_{0\le j\le k}\|\cR(u^{j})\|^2\}=0,
$$
where the constant $\varrho$ is defined as in \eqref{varrho} but with
$$
\begin{array}{l}
{\bf e} :=\|u^0_e\|^2_\Omega+2\tau\sigma\|\Theta^{-1/2}\|\left(\sum_{j=0}^\infty\tilde\varepsilon_j\right)
\left(\|u_e^0\|_\Omega+\mu \sum_{j=0}^\infty\tilde\varepsilon_j\right)+4
\left(\sum_{j=1}^{\infty}\tilde\varepsilon_j^2\right)
\end{array}
$$
and $u^0_e = u^0 - v^*$.
\end{theorem}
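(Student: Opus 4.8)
The plan is to obtain this as an immediate consequence of the equivalence recorded in Theorem \ref{theomain} together with the iteration complexity already established for Algorithm \ref{alg:alm} in Theorem \ref{thmiter}. First I would regard problem \eqref{probmulti} as an instance of \eqref{prob1} through the identifications \eqref{defnewvariable}, under which, by Theorem \ref{theomain}(c), the shadow sequence $\{v^k=(x^k,y^k,\Pi_{\range(\cG)}(z^k))\}$ is precisely a sequence produced by Algorithm \ref{alg:alm} started from $(x^0,(y^0;\Pi_{\range(\cG)}(z^0)))$, with the triple $(\what\Sigma_h,\cS,\cA)$ of Section \ref{sec:malm} replaced by $(\cP,\cT,\cB)$ from \eqref{tyz} and \eqref{hpo} and with error vector $d^k=(\Delta^k;\gamma^k)$.

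Next I would verify that the hypotheses of Theorem \ref{thmiter} are in force for this ALM sequence: condition \eqref{proxi} of Assumption \ref{ass:basic} holds by Theorem \ref{theomain}(a); the errors obey $\|(\Delta^k;\gamma^k)\|\le\what\varepsilon_k$ with $\{\what\varepsilon_k\}$ nonnegative and summable by Theorem \ref{theomain}(b); and $\{v^k\}\to v^*\in\bK$ is assumed. Theorem \ref{thmiter} then yields $\min_{0\le j\le k}\|\cR(v^j)\|^2\le\varrho/k$ and $\lim_{k\to\infty}\big(k\cdot\min_{0\le j\le k}\|\cR(v^j)\|^2\big)=0$, where $\varrho$ has the form \eqref{varrho}. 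To bring its constant ${\bf e}$ into the displayed shape I would substitute the explicit bound on $\what\varepsilon_k$ from the proof of Theorem \ref{theomain}(b) --- a summable combination of the prescribed tolerances $\tilde\varepsilon_j$ and the feasibility quantities $\xi_j$ --- and use the summability of $\{\xi_k\}$ from Proposition \ref{summable} together with the $\Omega$-invariance \eqref{eq:omegarangez}, which gives $\|u^0_e\|_\Omega=\|v^0-v^*\|_\Omega$ and keeps the constant finite.

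The one substantive point --- the step I expect to be the crux --- is the passage from the shadow residual $\cR(v^j)$ to the residual $\cR(u^j)$ formed from the raw iterate $u^j=(x^j,y^j,z^j)$, namely the claim $\cR(u^j)=\cR(v^j)$ for every $j$. I would verify this directly from \eqref{residual} under \eqref{defnewvariable}. The first block $c-\cF^*y-\cG^*z$ is unchanged when $z^j$ is replaced by $\Pi_{\range(\cG)}(z^j)$, since $\cG^*z^j=\cG^*\Pi_{\range(\cG)}(z^j)$. In the second block, $\nabla h(w)=(\nabla f(y);-b)$ and $\varphi(w)=p(y_1)$ depend only on $y$, so $\prox_\varphi$ acts as the identity in the $z$-coordinate; consequently the $z$-coordinate of $w-\prox_\varphi(w-\nabla h(w)-\cA x)$ equals $\cG x-b$ and the $y$-coordinates depend only on $(x,y)$, none of them involving $z$. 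Thus $\cR$ is blind to the component of $z$ outside $\range(\cG)$, which gives $\cR(u^j)=\cR(v^j)$ and transfers the two complexity estimates verbatim to $\{\cR(u^j)\}$. The remaining work is the routine bookkeeping of the constant, already supplied by Proposition \ref{summable} and Theorem \ref{theomain}(b).
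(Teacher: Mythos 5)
Your proposal is correct and follows essentially the same route as the paper: view the shadow sequence $\{v^k\}$ as an instance of Algorithm \ref{alg:alm} via Theorem \ref{theomain}, invoke Theorem \ref{thmiter}, and transfer the bounds to $\{u^k\}$ through the identities $\cR(u^k)=\cR(v^k)$ and $\|u^0-v^*\|_\Omega=\|v^0-v^*\|_\Omega$, both of which you verify exactly as the paper does (the residual and $\Omega$ see $z$ only through $\cG^*z$). Your extra remark on tracing the constant ${\bf e}$ back through the bound on $\what\varepsilon_k$ and the summability of $\{\xi_k\}$ from Proposition \ref{summable} is, if anything, more explicit than the paper's own treatment.
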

\begin{proof}
From \eqref{eq:omegarangez}, we know that
\[
\label{eq:ue0ve0}
\norm{u^0 - v^*}^2_{\Omega} = \inprod{\Omega(u^0 - v^*)}{u^0 - v^*}
= \inprod{\Omega(v^0) - \Omega(v^*)}{u^0 - v^*}
= \norm{v^0 - v^*}^2_{\Omega}.
\]
According to \eqref{residual}, \eqref{kkt} and \eqref{defnewvariable}, one has that
$$
\cR(u)=\begin{pmatrix}
c-\cF^*y+\cG^* z
\\[1mm]
y-\prox_{P}
(y-\nabla f(y)-\cF x)
\\[1mm]
\cG x-b
\end{pmatrix}, \quad\forall u=(x,y,z)\in\dX\times\dY\times\dZ.
$$
Since for all $k\ge 0$,
$\cG^*z^k=\cG\Pi_{\range(\cG^*)}(z^k)$, one has that $ \cR(u^k)=\cR(v^k)$.
Therefore, by using \eqref{proxixx} in Theorem \ref{theomain}(a), Theorem \ref{thmiter} and \eqref{eq:ue0ve0}, one has the results of this theorem holds. This completes the proof.
\qed\end{proof}

\section{Numerical Experiments}
\label{sec:num}
In this section, we conduct numerical experiments on solving dual linear SDP and dual convex quadratic SDP problems via Algorithm \ref{alg:admm}
with the dual step-length $\tau$ taking values beyond the
standard restriction of $(1+\sqrt{5})/2$.
For linear SDP problems, the algorithm reduces to the two-block ADMM, and the aim is two-fold.
On the one hand, as the ADMM is among the most important first-order algorithms for solving
SDP problems, it is of importance to know to what extent can the numerical efficiency be improved if
the observation on the dual step-length made in this paper is incorporated.
On the other hand, as the upper bound of the step-length has been enlarged, it is also important
to see whether a step-length that is very close to the upper bound
will lead to better or worse numerical performance.

A standard linear SDP problem has the following form:
\[
\label{numsdp}
\min_{X}\{ \langle C,X\rangle \; |\; \bA X={\bf b}, X\in\S_+^n\}
\]
and its corresponding dual is given as in \eqref{probsdp}.
To avoid repetition, we refer the reader to \eqref{probsdp}
for the notation used.
The (majorized) augmented Lagrangian function associated with {problem} \eqref{probsdp} is given by
$$
\begin{array}{r}
\cL_{\sigma}(S,{\bf z};X) \;=\; \delta_{\S_+^n}(S)-\langle {\bf b},{\bf z}\rangle
+\langle X,S+\bA^*{\bf z}-C\rangle
+\frac{\sigma}{2}\|S+\bA^*{\bf z}-C\|^2,
\qquad
\\[3mm]
\forall (S,{\bf z},X)\in\S^n\times\R^m\times\S^n,
\end{array}
$$
where $\sigma>0$ is the given penalty parameter.
When applied to solving problem {problem} \eqref{probsdp}, at the $k$-th step of the two-block ADMM the following steps are performed:
$$
\left\{
\begin{array}{l}
S^{k+1}=\Pi_{\S_+^n}(C-\bA^*{\bf z}^k-X^k/\sigma),
\\[3mm]
{\bf z}^{k+1}=(\bA\bA^*)^{-1}\left(\bA(C-S^{k+1})-(\bA X^k-{\bf b})/\sigma\right),
\\[3mm]
X^{k+1}=X^k+\tau\sigma(S^{k+1}+\bA^*{\bf z}^{k+1}-C),
\end{array}
\right.
$$
where the step-length $\tau$ is allowed to be in the range $(0,2)$ based on
Theorem \ref{theomain} and the discussions in Section \ref{sect41}.
We emphasize again that this is in contrast to the usual interval of
$(0,(1+\sqrt{5})/2)$ allowed by the convergence analysis of Glowinski in \cite[Theorem 5.1]{glo80}.

On the other hand, as was briefly introduced in Section \ref{secccqp}, the convex QSDP problem was formally given in \eqref{qsdp}, whose dual problem, in minimization form, is a multi-block problem given by
\[
\label{dqsd}
\begin{array}{cl}
\ds
\min_{S,W,{\bf z}_E,{\bf z}_I}
&
\delta_{\S_+^n}(S)
+{\delta_{\R_{+}^{m_I}}({\bf z}_I)}
+\frac{1}{2} \langle W,\mathbfcal{Q} W\rangle
-\langle {\bf b}_E,{\bf z}_E\rangle
-\langle {\bf b}_I,{\bf z}_I\rangle
\\[3mm]
\mbox{s.t.}
&
S
-
\mathbfcal{Q}W
+\bA_E^*{\bf z}_E+\bA_I^*{\bf z}_I
+C=0.
\end{array}
\]
Note that problem \eqref{qsdp} was subsumed as an instance of the convex quadratic composition optimization problem \eqref{probcqpie}.
Therefore, {to fit the framework of} Algorithm \ref{alg:admm}, we write the dual of problem \eqref{qsdp} in the minimization form as
\[
\label{dqsdpnum}
\begin{array}{cl}
\ds
\min_{S,W,{\bf s},{\bf z}_E,{\bf z}_I}
&
\delta_{\S_+^n}(S)
+\delta_{\R_{+}^{m_I}}({\bf s})
+\frac{1}{2} \langle W,\mathbfcal{Q} W\rangle
-\langle {\bf b}_E,{\bf z}_E\rangle
-\langle {\bf b}_I,{\bf z}_I\rangle
\\[3mm]
\mbox{s.t.}
&
\left\{
\begin{array}{l}
S
-
\mathbfcal{Q}W
+\bA_E^*{\bf z}_E+\bA_I^*{\bf z}_I
+C=0,
\\[3mm]
D({\bf s}-{\bf z}_I)=0,
\end{array}
\right.
\end{array}
\]
where $D\in\R^{m_I\times m_I}$ is a {given} positive definite diagonal matrix which is
incorporated here for for the purpose of scaling the variables to ensure the numerical stability.

The convex QSDP problem \eqref{qsdp} is solved via its dual \eqref{dqsdpnum}, whose (majorized) augmented Lagrangian function is defined by
$$
\begin{array}{ll}
\cL_\sigma(S,W,{\bf z}_E,{\bf z}_I,s;X,{\bf x}):
=&\big( \delta_{\S^n_{+}}(S)+\delta_{\R_{+}^{m_I}}({\bf s})\big)
+\frac{1}{2}\inprod{W}{\mathbfcal{Q} W}
-\inprod{{\bf b}_E}{{\bf z}_E}
-\inprod{{\bf b}_I}{{\bf z}_I}
\\[3mm]
&+\langle X, S-\mathbfcal{Q} W +\bA_E^* {\bf z}_E + \bA_I^*{\bf z}_I + C\rangle
+\langle D({\bf s} - {\bf z}_I),{\bf x}\rangle
\\[3mm]
&+\frac{\sigma}{2}\| S-\mathbfcal{Q} W +\bA_E^* {\bf z}_E + \bA_I^*{\bf z}_I + C\|^2
+\frac{\sigma}{2}\| D({\bf s} - {\bf z}_I)\|^2,
\\[2mm]
&\quad\ds
\forall (S,W,{\bf z}_E,{\bf z}_I,s;X,{\bf x})\in\S^n\times \S^n\times\R^{m_E}\times\R^{m_I}\times\R^{m_I}\times \S^n\times\R^{m_I}. 
\end{array}
$$
where $\sigma>0$ is the given penalty parameter and
and we have used $X\in\S^{n}$ and ${\bf x}\in\R^{m_{I}}$ to denote the Lagrange multipliers which are introduced for the two groups of equality constraints in \eqref{dqsdpnum}.
During the $k$-th iteration of Algorithm \ref{alg:admm} with given $(S^k,W^k,{\bf z}^k_E,{\bf z}^k_I,{\bf s}^k)$ and $(X^k,{\bf x}^k)$, we update the variables in the order of
$$
\Big(
\underbrace{{\bf z}_E^{k+1/2}
\Rightarrow W^{k+1/2}
}_{\mbox{backward GS}}
\Rightarrow
\underbrace{ (S^{k+1},{\bf s}^{k+1}) \Rightarrow W^{k+1}
\Rightarrow {\bf z}^{k+1}_E}_{\mbox{forward GS}}
\Big)
\Rightarrow {\bf z}_I^{k+1}
\
\Rightarrow \underbrace{\Big(X^{k+1},{\bf x}^{k+1}\Big)}_{\tau\in(0,2)}.
$$
Note that the term $\inprod{{\bf b}_I}{{\bf z}_I}$ is treated as the linear term
in the framework of \eqref{probmulti}. We made this choice because for the
test instances that we will consider later, the linear system that must be solved
to update ${\bf z}_I$ is much larger than that for updating ${\bf z}_E$,
and in this way, the larger linear system will be solved only once in each iteration.

The numerical results in the subsequent two subsections are obtained by using {\sc Matlab} R2017b on a HP Elitedesk (64-bit Windows 10 system) with one Intel Core i7-4770S Processor (4 Cores, $3.1-3.9$ GHz) and $16$ GB RAM (with the virtual memory turned off).

\subsection{Numerical Results on Linear SDP Problems}
\label{numsec1}
Based on the first-order optimality condition for problem \eqref{numsdp}, we terminate all the tested algorithms if
$$
\begin{array}{c}
\eta_{\textup{SDP}} :=
\max\{\eta_D, \eta_P, \eta_{S}\} \;\leq \; 10^{-6},
\end{array}
$$
where
$$
\left\{
\begin{array}{lll}
\ds
\eta_{D}=\frac{\|\bA^{*}{\bf z}+S-C\|}{1+\|C\|},\
\eta_{P}=\frac{\|\bA X-{\bf b}\|}{1+\|{\bf b}\|},
\\[5mm]
\ds
\eta_{S}=\max\left\{\frac{\|X-\Pi_{\S_{+}^{n}}(X)\|}{1+\|X\|},
\frac{|\langle X,S\rangle|}{1+\|X\|+\|S\|}\right\}
\end{array}
\right.
$$
with the maximum number of iterations set at $100,000$.
In addition, we also measure the duality gap:
$$
\eta_{\rm gap}:=\frac{ \langle C,X\rangle-\langle {\bf b},{\bf z}\rangle}{1+|\langle C,X\rangle|+|\langle {\bf b},{\bf z}\rangle|}.
$$
During our preliminary tests, we found that using a step-length smaller than $1$ is not as good as using the unit step-length. Therefore, we shall only consider the cases that $\tau\ge 1$.
Note that the known theoretical upper bound of the step-length $\tau$ in the classic ADMM for solving general convex programming is $\frac{1+\sqrt{5}}{2} \approx 1.618034$.
Although it has been observed empirically that the ADMM with the step-length $\tau = 1.618$ works quite well, this phenomenon still requires further understanding since the value $1.618$ is quite close to the theoretical upper bound and such an aggressive choice may result in unstable
numerical performance.
Fortunately, the above concern is partially alleviated by the theoretical results obtained in this paper. Indeed, for a large class convex optimization problems, one can use $\tau=1.618$ confidently since it has a ``safe'' distance to the renewed theoretical upper bound of $2$.
For this class of problems, it is thus very interesting to see what
would happen if the step-length $\tau$ is very close to $2$. Therefore, we tested five choices of the step-length, i.e., $\tau=1$, $1.618$, $1.90$, $1.99$ and $1.999$.
For convenience, we use ADMM($\tau$) to denote the algorithm with the specific step-length $\tau$.

We tested $6$ categories of linear SDP problems, including the random sparse SDP problems tested in \cite{malick}, the semidefinite relaxation of frequency assignment problems
(FAP) \cite{eisnbl}, the relaxation of maximum stable set problems \cite{toh04,trick,sloane}, the SDP relaxation of binary integer quadratic (BIQ) problems from \cite{wiegele},
the SDP relaxation of rank-1 tensor approximations (R1TA) \cite{nie12,nie14},
and the SDP relaxations of clustering problems \cite{peng07}. One may refer to \cite{zhao,yst2015} for detailed descriptions and the data sources of these problems.
All these algorithms are tested by running the {\sc Matlab} package SDPNAL$+$ (version 1.0, available at \url{http://www.math.nus.edu.sg/~mattohkc/SDPNALplus.html}).
The records of the computational results are provided in Table \ref{tablesdpresult}.
Here, we should mention that even though all the problems we tested have been successfully solved by at least one of the tested algorithms, there are a few categories of SDP problems that are beyond the capability of the ADMM, see, e.g., \cite{zhao}.

\begin{figure}
 \centering
 \includegraphics[width=0.9\textwidth]{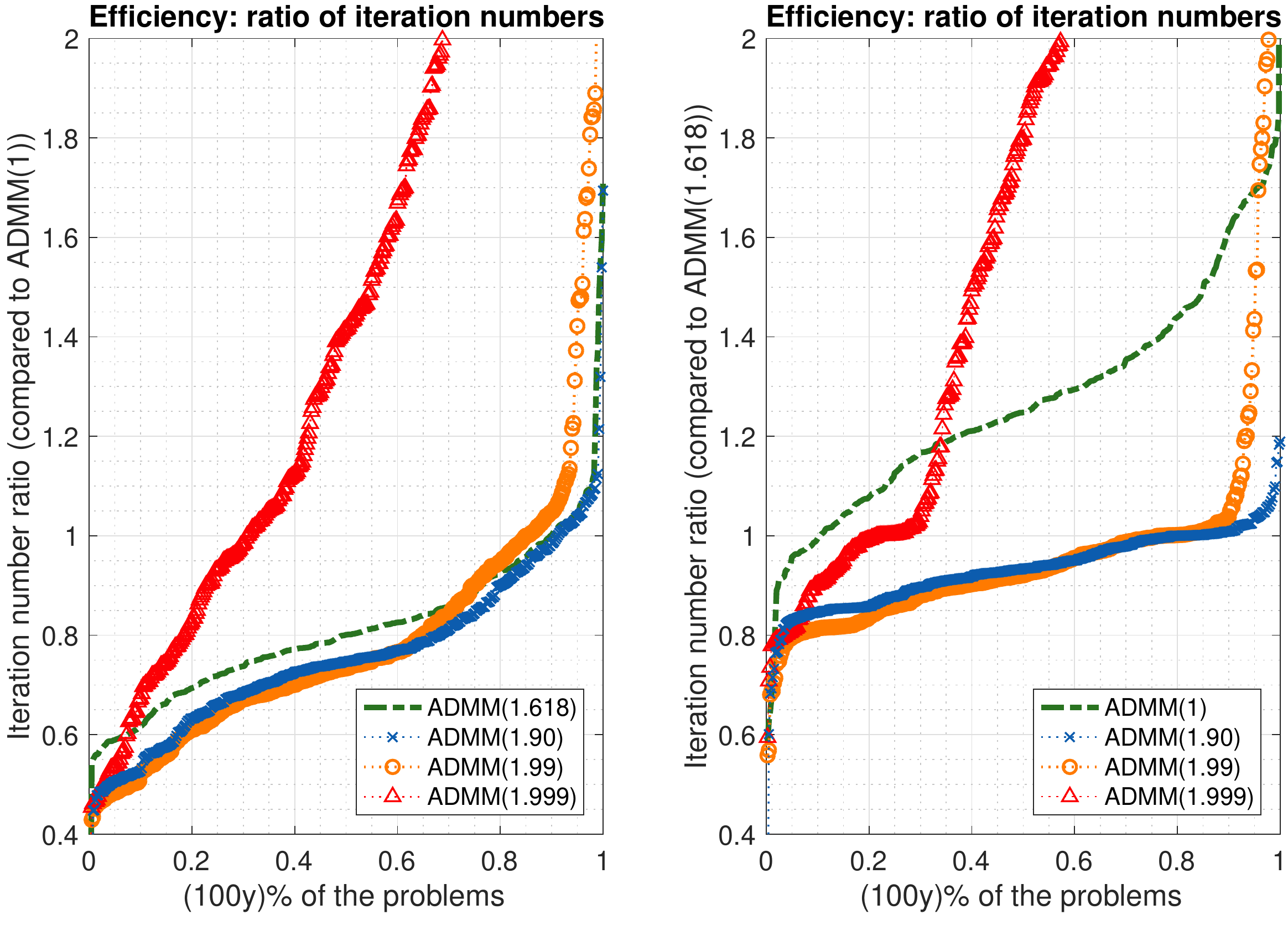}
 \caption{Comparison of the computational efficiency of the classic two-block ADMM with different step-lengths}
 \label{figeff}
\end{figure}

Figure \ref{figeff} presents the computational performance of the ADMM with all the five choices of step-lengths.
The left {panel} shows the comparison between ADMM($1$) and all the other algorithms, while the right {panel} shows the comparison between ADMM($1.618$) and all the others.
As can be seen from Figure \ref{figeff}, ADMM($1.618$) has an impressive improvement over ADMM($1$) and ADMM($1.9$) works even better than ADMM($1.618$) for more than $80\%$ of the tested instances.
Furthermore, ADMM($1.99$) can perform marginally better than ADMM($1.9$) for about $60\%$ of the tested instances but for about 10\% of them, its performance is apparently worse.
However, ADMM($1.999$) has a significantly worse performance than ADMM($1.99$)
even though its step-length is just slightly larger than $1.99$. This can be partially explained by the fact that the step-length of $1.999$ is too close to the theoretical upper bound of $2$.

From both the theoretical analysis and numerical experiments in this paper, one can see that in general it is a good idea to use a step-length that is larger than $1$, e.g., $\tau=1.618$, when solving linear SDP problems.
Meanwhile, we can even set the step-length to be larger than $1.618$, say $\tau=1.9$, to get even better numerical performance.

\subsection{Numerical Results on Convex QSDP Problems}
The KKT system of problem \eqref{dqsd} is given by
\[
\label{eq:op-M}
\left\{
\begin{array}{l}
\ds
S-\mathbfcal{Q}W+\bA_E^*{\bf z}_E+\bA_I^*{\bf z}_I+C=0,\
\bA_{E}X-{\bf b}_{E}=0,
\\[3mm]
\ds
\mathbfcal{Q} X-\mathbfcal{Q} W=0, \
X\in\S^{n}_{+},\
S\in\S_{+}^{n},\
\langle X, S\rangle=0,
\\[3mm]
\ds
\bA_{I}X-{\bf b}_{I}\ge0,\
{\bf z}_{I}\ge 0,\
\langle \bA_{I}X-{\bf b},{\bf z}_{I}\rangle =0.
\end{array}
\right.
\]
Based on the optimality conditions given in \eqref{eq:op-M}, 
we measure the accuracy of an approximate (computed) solution $(X,Z,W,S,y_E,y_{I})$ for the convex
 QSDP \eqref{qsdp} and its dual \eqref{dqsd} via
\begin{equation}
\label{stop:sqsdp}
\begin{array}{c}
\eta_{\textup{qsdp}} =
\max\left\{\eta_D, \eta_P,\eta_{W}, \eta_{S},
\eta_{I}\right\},
\end{array}
\end{equation}
where
$$
\left\{
\begin{array}{l}
\ds
\eta_{D}=\frac{\|S-\mathbfcal{Q}W+\bA_E^*{\bf z}_E+\bA_I^*{\bf z}_I+C\|}{1+\|C\|},\quad
\eta_{P}=\frac{\|\bA_{E}X-{\bf b}_{E}\|}{1+\|{\bf b}_{E}\|},
\\[5mm]
\ds
\eta_{W}=\frac{\|\mathbfcal{Q} X-\mathbfcal{Q} W\|}{1+\|Q\|},\quad
\eta_{S}=\max\left\{\frac{\|X-\Pi_{\S_{+}^{n}}(X)\|}{1+\|X\|},
\frac{|\langle X,S\rangle|}{1+\|X\|+\|S\|}
\right\},
\\[5mm]
\ds
\eta_{I}=\max\left\{
\frac{\|\min(0,\,{\bf z}_{I})\|}{1+\|{\bf z}_{I}\|},
\frac{\|\min(0,\,\bA_{I}X-{\bf b}_{I})\|}{1+\|{\bf b}_{I}\|},
\frac{|\langle \bA_{I}X-{\bf b}_I,{\bf z}_{I}\rangle|}{1+\|\bA_{I}x-{\bf b}_I\|+\|{\bf z}_{I}\|}\right\}.
\end{array}
\right.
$$
Additionally, we measure the objective values and the duality gap:
$$
\eta_{\rm gap}:=\frac{\rm Obj_{primal}-Obj_{dual}}{1+|\rm Obj_{primal}|+|Obj_{dual}|},
$$
where 
$$
{{\rm Obj}_{\rm primal}}:=\frac{1}{2}\langle X,\mathbfcal Q X\rangle-\langle C,X\rangle,
\quad\mbox{and}\quad
{\rm Obj_{dual}}:
=-\frac{1}{2}\inprod{W}{\mathbfcal{Q} W}
+ \inprod{{\bf b}_E}{{\bf z}_E} + \inprod{{\bf b}_I}{{\bf z}_I}.
$$
In our numerical experiments, similar to \cite{chenl2015}, we used QSDP test instances
based on the SDP problems arising from the relaxation of the binary integer quadratic (BIQ) programming with a large number of inequality constraints that was introduced by Sun et al. \cite{sty2015} for getting tighter bounds.
The problems that we actually solve have the following form:
$$
\begin{array}{cl}
\min &\ds
\frac{1}{2}\inprod{X}{\mathbfcal{Q} X} + \frac{1}{2} \inprod{Q}{\overline{X}} + \inprod{c}{x} \\[3mm]
\mbox{s.t.}&\left\{
\begin{array}{l}
 \textup{diag}(\overline{X}) - x = 0, \
 X = \begin{pmatrix}
\overline{X} & x \\
x^T & 1 \\
\end{pmatrix}
\in \Sn_+,
\\[5mm]
x_i-\overline{X}_{ij}\geq 0,\,
 x_j-\overline{X}_{ij} \geq 0,\,
 \overline{X}_{ij} - x_i -x_j \geq -1,\
\forall\, 1\leq i < j \leq n-1.
\end{array}
\right.
\end{array}
$$
The data for $Q$ and $c$ are taken from the Biq Mac Library maintained by Wiegele \cite{wiegele}.

We solve the QSDP \eqref{qsdp} via its dual \eqref{dqsdpnum} with the matrix {$D=({\sqrt{\|\bA_{I}\|}}/{2}) I_{\R^{m_I}}$.}
We use the directly extended multi-block {ADMM} with step-length $\tau=1$ as the benchmark (which we named as `{\tt Directly Extended}'), and compare it with Algorithm \ref{alg:admm}, which was implemented in $6$ different ways, i.e., $2$ groups of algorithms with each using $3$ types of different step-lengths, i.e., $\tau=1$, $1.618$ and $1.9$, which are chosen according to the numerical results in Section \ref{numsec1}.
For convenience, we use the name `{\tt sGS-PADMM}' to mean that Algorithm \ref{alg:admm} is implemented
{such that all the subproblems are solved exactly via direct solvers or adding appropriate} proximal terms, and use `{\tt sGS-iPADMM}' to mean that Algorithm \ref{alg:admm} is implemented
{such that the subproblems are allowed to be solved inexactly via iterative solvers.}
The details of all the {seven} tested algorithms are presented in Table \ref{algorithms}.
\begin{table}[H]
\caption{$7$ types of algorithms tested for the convex QSDP problems. In the table,
`{\tt Dir}' denotes the corresponding subproblems are solved via direct solvers, `{\tt Proj.}'
means that the corresponding subproblems are the calculation of projections,
`{\tt Prox.}' means that the corresponding subproblems are solved via adding appropriate proximal terms to ensure closed-form solutions,
`{\tt Inex.}' means that the subproblems are solved approximately via an iterative scheme,
`{\tt (rep.)}' means that the corresponding subproblems in the forward sGS sweeps are also solved,
`{\tt (check)}' means that the corresponding subproblems in the forward sGS sweeps are not directly solved but the most recently updated variables are checked to see if they are admissible approximate solution to the subproblems.
}\label{algorithms}
\small
\begin{center}
\begin{tabular}{|c|c|c|c|c|c|c|}
\hline
&&&&&&
\\[-2mm]
{\bf Algorithm} $\backslash$ {\bf Variable}
& $\tau$
& ${\bf z}_E$
& $W$
& $S$
& ${\bf s}$
& ${\bf z}_I$
\\[1mm]
\hline
&&&&&&
\\[-2mm]
{\tt Directly Extended}
& $1$
& {\tt Dir.}
& {\tt Prox.}
& {\tt Proj.}
& {\tt Proj.}
& {\tt Prox.}
\\
\hline
&&&&&&
\\[-2mm]
\multirow{4}{*}{\tt sGS-PADMM}
& $1$
& {\tt Dir.(rep.)}
& {\tt Prox.(rep.)}
& {\tt Proj.}
& {\tt Proj.}
& {\tt Prox.} \\[1mm]
\cline{2-7}
&&&&&&
\\[-2mm]
& $1.618$
& {\tt Dir.(rep.)}
& {\tt Prox.(rep.)}
& {\tt Proj.}
& {\tt Proj.}
& {\tt Prox.} \\[1mm]
\cline{2-7}
&&&&&&
\\[-2mm]
& $1.9$
& {\tt Dir.(rep.)}
& {\tt Prox.(rep.)}
& {\tt Proj.}
& {\tt Proj.}
& {\tt Prox.} \\[1mm]
\hline
&&&&&&
\\[-2mm]
\multirow{4}{*}{\tt sGS-iPADMM}
& $1$
& {\tt Dir.\blue{(check)}}
& {\tt \red{Inex.}\blue{(check)}}
& {\tt Proj.}
& {\tt Proj.}
& {\tt \red{Inex.}}\\[1mm]
\cline{2-7}
&&&&&&
\\[-2mm]
& $1.618$
& {\tt Dir.\blue{(check)}}
& {\tt \red{Inex.}\blue{(check)}}
& {\tt Proj.}
& {\tt Proj.}
& {\tt \red{Inex.}}\\[1mm]
\cline{2-7}
&&&&&&
\\[-2mm]
& $1.9$
& {\tt Dir.\blue{(check)}}
& {\tt \red{Inex.}\blue{(check)}}
& {\tt Proj.}
& {\tt Proj.}
& {\tt \red{Inex.}}\\[1mm]
\hline
\end{tabular}
\end{center}
\end{table}
\normalsize
For all the algorithms applied to problem \eqref{dqsdpnum}, the subproblems corresponding to the block variable $(S,s)$ can {be}
solved analytically by computing the projections onto $\S^n_+\times \R^{m_I}_+$.
For the subproblems corresponding to ${\bf z}_E$, linear systems of equations must be solved with the same coefficient matrix
$\bA_E\bA_E^*$. As the linear systems are not too large, we solve them via the Cholesky factorization (computed only once) of $\bA_E\bA_E^*$.
 For the subproblems corresponding to ${\bf z}_{I}$ and $W$,
we need to solve very large scale linear systems of equations, so that they are {either} solved via a preconditioned conjugate gradient method with preconditioners that are described in \cite[Section 7.1]{chenl2015} (for {\tt sGS-iPADMM}), or solved directly by adding an appropriate proximal term to the subproblems to get closed-form solutions (for {\tt sGS-PADMM}).
Moreover, in the implementation of the {\tt sGS-PADMM}, all the subproblems {in} the forward Gauss-Seidel sweep are directly solved, while in the implementation of {\tt sGS-iPADMM} we {used the strategy described in \cite[Remark 4.1(b)]{chenl2015} to decide whether the computation of the subproblems in the forward GS sweep can be skipped} (see \cite[Section 7.2]{chenl2015} for more details on using this technique).
We used {a similar strategy as} described in \cite[Section 4.4]{lam} to adaptively adjust the penalty parameter $\sigma$
and {used} the same technique as in \cite{chenl2015} to control the error tolerance for solving the subproblems, i.e., $\{\tilde \varepsilon_{k}\}_{k\ge 0}$ is chosen such that $\alpha\tilde\varepsilon_{k}\le 1/k^{1.2}$, where $\alpha>0$ is a positive constant based on the problem data.

We have tested $147$ instances of convex QSDP problems with $n$ ranging from $51$ to $501$. The linear operator $\mathbfcal{Q}$ was chosen as the symmetrized Kronecker operator ${\mathbfcal Q}(X)=\frac{1}{2}(AXB+BXA)$ with $A$ and $B$ being two randomly generated symmetric positive semidefinite matrices such that ${\rm rank}(A)=10$ and ${\rm rank}(B)\approx n/5$, respectively, as was used in \cite{toh08,chenl2015}.
The maximum iteration number is set at $500,000$.
The detail computational results are provided in Table \ref{tableqsdpresult}.

\begin{figure}[H]
 \centering
\includegraphics[width=0.9\textwidth]{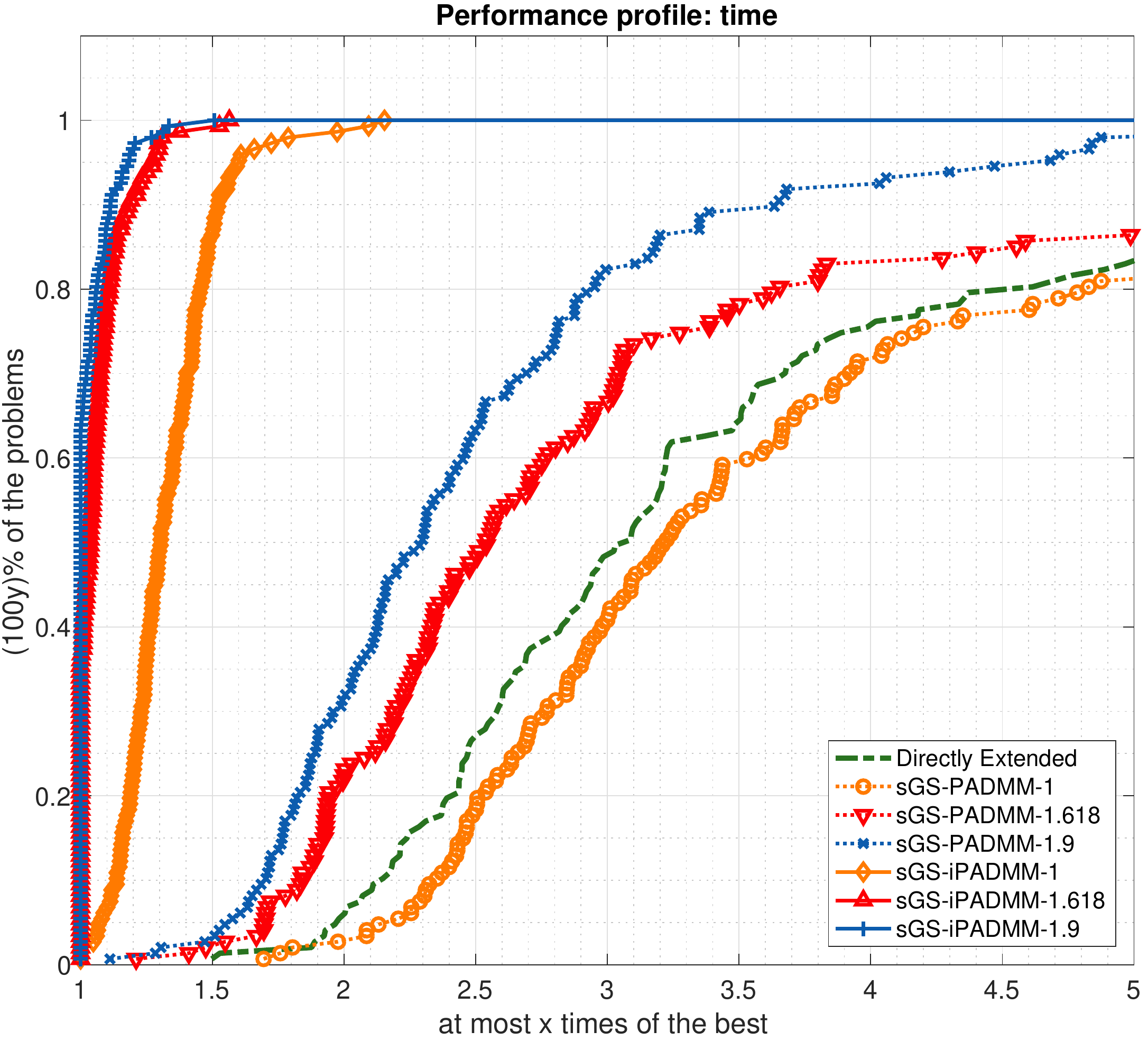}
\caption{Comparison of the computational efficiency of the $7$ algorithms}
\label{figtest}
\end{figure}

Figure \ref{figtest} shows the numerical performance of the $7$ tested algorithms described in Table \ref{algorithms} on solving the convex QSDP problems to the accuracy of $10^{-6}$ in terms of $\eta_{\textup{qsdp}}$ in \eqref{stop:sqsdp}.
One can readily see from the figure that
{\tt sGS-iPADMM} overwhelmingly outperforms {\tt sGS-PADMM}, no matter which step-length $\tau$ was used.
This evidently shows the considerable advantage of catering for approximate solutions in the subproblems of Algorithm \ref{alg:admm}. Moreover, for both {\tt sGS-PADMM} and {\tt sGS-iPADMM}, the step-length $\tau=1.618$ is able to bring a noticeable improvement on the numerical efficiency, compared with using the unit step-length. Meanwhile, the choice of $\tau=1.9$ can perform even better in general.
Even this is more apparent for {\tt sGS-PADMM}, in which all the subproblems are solved exactly.
{We can see} that {\tt sGS-iPADMM} with $\tau=1.9$ performs the best among all the tested algorithms for almost $65\%$ of all the tested problems.
Hence, the numerical results clearly demonstrate the merit of using a larger step-length and the flexibility of inexactly solving the subproblems.

\section{Conclusions}
In this paper, we have shown that, for a class of convex composite programming problems, the sequence generated by an inexact sGS decomposition based multi-block majorized (proximal) ADMM is equivalent to the sequence generated by an inexact proximal ALM starting with the same initial point. The convergence of the inexact majorized proximal ALM was first established, and the convergence of the multi-block ADMM-type algorithm follows readily because of the newly discovered equivalence.
As a consequence of this equivalence, we are able to provide a very general answer to the open question on whether the whole sequence generated by the classic ADMM with $\tau\in(0,2)$ for a conventional two-block problem with one part of its objective function being linear, is convergent.
Numerical experiments on solving a large number of linear and convex quadratic SDP problems are conducted.
The numerical results show that one can achieve even better numerical performance of the ADMM if the step-length is chosen to be larger than the conventional upper bound of $(1+\sqrt{5})/2$, and one can get a considerable improvement by allowing inexact subproblems together with the large step-lengths on the multi-block ADMM for convex quadratic SDP problems.
 We hope that our theoretical analysis and numerical results can inspire more insightful studies on the ADMM-type algorithms.

\section*{Acknowledgments}
We would like to thank the two anonymous referees for their careful reading of this paper, and their insightful comments and suggestions which have helped to improve the quality of this paper.

\small
\bibliographystyle{siamplain}

\newpage
\clearpage

\newgeometry{left=2.54cm,right=2.54cm,top=2cm,bottom=1.7cm}
\scriptsize

\begin{landscape}

\begin{center}
% [inline block 0: 2 envs, 122242 chars -> data_tex | \begin{longtable}{|cc|c|c|c|c|} \caption{ The performance of ADMM with $\tau=1,1.618,1.90,1.99,1.999$ on solving SDP pro...]

\end{center}

\end{landscape}

\end{document}